\tikzset{gloop/.style =  {to path={
  \pgfextra{}
  [looseness=3,min distance=6mm,-stealth,thick, dotted, shorten >=1pt]
  \tikz@to@curve@path}
  }}  
\tikzset{ggloop/.style =  {to path={
  \pgfextra{}
  [looseness=3,min distance=6mm,-stealth,thick,  dashed, shorten >=1pt]
  \tikz@to@curve@path}
  }}  
\tikzset{kloop/.style =  {to path={
  \pgfextra{}
  [looseness=5,min distance=6mm,-stealth,shorten >=1pt]
  \tikz@to@curve@path},font=\sffamily\small
  }}  
\theoremstyle{plain}
\newtheorem{theorem}{Theorem}[section]
\newtheorem{lemma}[theorem]{Lemma}
\newtheorem{proposition}[theorem]{Proposition}
\newtheorem{corollary}[theorem]{Corollary}
\theoremstyle{definition}
\newtheorem{ex}[theorem]{Example}
\newenvironment{newlist}
   {\begin{list}{}{\setlength{\labelsep}{0.25cm}
                   \setlength{\labelwidth}{0.65cm}
                      \setlength{\leftmargin}{0.9cm}}}
   {\end{list}}
\newenvironment{claimlist}
   {\begin{list}{}{\setlength{\labelsep}{0.25cm}
                   \setlength{\labelwidth}{0.65cm}
                      \setlength{\leftmargin}{0.4cm}}}
   {\end{list}}
\newcommand{\newapprox}{\simeq}
\newcommand{\newcover}{\raise-.18ex\hbox{\Large$ \Yleft$}} 
\renewcommand{\leq}{\leqslant}
\renewcommand{\geq}{\geqslant} 
\renewcommand{\phi}{\varphi}
\newcommand{\cat}[1]{\boldsymbol{\mathscr{#1}}}
\newcommand{\str}[1]{\mathbf{#1}}
\newcommand{\fnt}[1]{\mathsf{#1}}
\newcommand{\ope}[1]{\mathbb{#1}}
\newcommand{\defn}[1]{{\emph{#1}}}
\newcommand{\Tp}{\mathscr{T}}
\newcommand{\ISP}{\ope{ISP}}
\newcommand{\HSP}{\ope{HSP}}
\DeclareMathOperator{\graph}{graph}
\DeclareMathOperator{\ran}{ran}
\DeclareMathOperator{\dom}{dom}
\DeclareMathOperator{\id}{id}
\DeclareMathOperator{\End}{End}
\newcommand{\PE}{\End_{\text{p}}}    
\newcommand{\CnT}{\twiddle{\Cn}}
\newcommand{\CXn}{\CX_n}
\newcommand{\Gn}{\cat{G}_n}
\newcommand{\G}{\cat{G}}
\newcommand{\Cn}{\str{C}_n}
\newcommand{\CFn}{\cat{F}_n}
\newcommand{\CA}{\cat A}
\newcommand{\CX}{\cat X}
\newcommand{\CP}{\cat P}
\newcommand{\CCD}{\cat D}
\newcommand{\X}{\str X}
\newcommand{\XX}{{\scriptscriptstyle \str X}}
\newcommand{\YY}{{\scriptscriptstyle \str Y}}
\newcommand{\ZZ}{{\scriptscriptstyle \str Z}}
\newcommand{\Y}{\str Y}
\newcommand{\Z}{\str Z}
\newcommand{\M}{\str M}
\newcommand{\A}{\str A}
\newcommand{\B}{\str B}
\newcommand{\C}{\str C}
\newcommand{\Free}{\mathbf{F}}
\newcommand{\D}{\fnt{D}}
\newcommand{\E}{\fnt{E}}
\newcommand{\Lalg}{\str L}
\newcommand{\two}{\boldsymbol 2}
\newcommand{\twoT}{\twiddle{\boldsymbol 2}}
\newcommand{\w}{\omega}
\newcommand{\restrict}[1]{{\upharpoonright}_{#1}}
\newcommand{\powerset}[1]{{
 \raise.5ex\hbox{\large $\wp$}#1}}
\newcommand{\powersetn}{
\raise.4ex\hbox{\large $\wp$}_n}
\newcommand{\twiddle}[1]{{\smash{\underset{\raise.375ex\hbox{$\smash\sim$}}
       {#1}}\vphantom{\underline{#1}}}} 
 \DeclareMathOperator{\IScP}{{\ope{IS} _{\mathrm{c}}%
 \ope{P}^+}}
\DeclareMathOperator{\Max}{max}  
\newcommand{\MT}{\twiddle{\M}}
\newcommand{\lincol}{black}
\newcommand{\linth}{thick}
\newcommand{\po}[2][\pocol]{\filldraw[#1](#2) circle (2 pt);}
\newcommand{\epo}[1]{\filldraw[fill=white,\linth](#1) circle (2pt);}
\newcommand{\li}[1]{\draw[\linth,\lincol] #1;}
\newcommand{\dotli}[1]{\draw[dotted,\linth,\lincol] #1;}
\newcommand{\ggarr}[1]{\draw[-stealth,thick,  dashed, shorten <= 1.3pt,
shorten >=1.3pt]  #1;}
\newcommand{\garr}[1]{\draw[-stealth,thick, dotted, shorten <= 1.3pt,
shorten >=1.3pt]  #1;}
\newcommand{\harr}[1]{\draw[-stealth, 
shorten >=1.3pt]  #1;}
\newcommand{\karr}[1]{\draw[-stealth, 
shorten >=1.3pt]  #1;}
\newcommand{\earr}[1]{\draw[-stealth, thick, dotted,shorten <= 1.3pt,
shorten >=1.3pt]  #1;}
\newcommand{\eearr}[1]{\draw[-stealth,thick,  dashed, shorten <= 1.3pt,
shorten >=1.3pt]  #1;}
\tikzset{every.loop/.style={
looseness=30}}
\begin{document}
\title[]{G\"odel algebras: interactive dualities and their applications}
\keywords{G\"odel algebra, Heyting algebra, natural duality, Esakia duality, amalgamation, coproduct  }
\subjclass[2010]{Primary: 06D50; 
Secondary: 
08C20, 
06D20 
03G25}

\dedicatory{Dedicated to Brian Davey in celebration of his 65th birthday}	
\thanks{The first author was supported by a Marie Curie Intra European Fellowship within the 7th European Community Framework Program (ref. 299401-FP7-PEOPLE-2011-IEF). }

\author{L.\,M.\,Cabrer}
\email{l.cabrer@disia.unifi.it}
\address[lmc]{Dipartimento di Statistica, Informatica, Applicazioni, Universit\`a degli Studi di Firenze, 59 Viale Morgani, 50134, Italy}
\author{H.\,A. Priestley}
\email{hap@maths.ox.ac.uk}
\address{Mathematical Institute, University of Oxford, Radcliffe Observatory Quarter,
Oxford OX2 6GG, United Kingdom}
\begin{abstract}
We present a  technique for deriving certain new natural dualities for any variety of  algebras generated by a finite Heyting chain. The dualities we construct are tailored to admit a transparent translation to the more pictorial Priestley/Esakia duality and back again. This enables us to combine  the two approaches and so to capitalise on  the virtues of both, in particular the categorical good behaviour of a natural duality: we thereby demonstrate the fullness, or not, of  each of our dualities; we obtain new results on amalgamation; and we also provide a simple treatment of coproducts.
\end{abstract}

\maketitle

\section{Introduction}  \label{sec:intro}

This paper focuses on the classes of algebras  $\Gn = \ISP(\Cn)$, where $\Cn$ is the $n$-element Heyting  chain, and on natural dualities for them.  As we recall below, the 
classes $\Gn$  are important both within and beyond duality theory.

It is highly 
appropriate that this topic should feature in the  Special Issue of Algebra 
Universalis in honour of Brian Davey: through his work, the  
 varieties $\Gn$ have been influential 
as a  example within natural duality theory  for nearly 40 years, beginning with 
 his trail-blazing 1976 paper \cite{D76}.  The advances to which study of these
varieties have made a contribution are well chronicled, 
and fully referenced,  
by Davey and Talukder  \cite[Section~1]{DT05}.  Here we add a further chapter to the saga.   
Although the emphasis will be squarely on the varieties $\Gn$, 
which  have very special features which work to our advantage, 
glimpses also open up of  future avenues of investigation  with 
wider scope.

Many varieties which provide illuminating test case examples for duality theory 
are also of relevance to logic. 
In particular any variety 
$\CA$ 
of Heyting algebras is the algebraic counterpart of an 
intermediate logic $\mathcal{L}_{\CA}$ (an axiomatic extension of 
intuitionistic propositional logic, IPC). 
The varieties $\Gn$
are the
proper 
subvarieties of the variety  $\G$ of
G\"odel algebras  (also known as G\"odel/Dummett algebras, $\Lalg$-algebras, or pre-linear Heyting algebras),  {\it viz.}~ the Heyting algebras that satisfy the pre-linearity equation $(a\to b)\lor (b\to a)\approx \top$. 
They are the equivalent
algebraic  semantics
of the  
 G\"odel/Dummett extension of IPC  obtained by adding the linearity axiom 
$(x\to y)\vee (y\to x)$   (see
 \cite{Go,Du} and  also \cite{Pl} for further historical references).
In general, 
if a  quasivariety $\CA$ is the equivalent algebraic 
semantics for a sentential logic $\mathcal{L}_{\CA}$,
then there are connections between 
algebraic properties in $\CA$ and logical properties of 
$\mathcal{L}_{\CA}$. 
For example, different 
amalgamation properties in~$\CA$ correspond to different  interpolation 
properties in $\mathcal{L}_{\CA}$ (see for example \cite{CzPi}).
 This leads us
to study amalgamation in G\"odel algebras, as reported below.

Duality theory for Heyting algebras is intimately connected with 
Kripke semantics  for IPC.   Paralleling  these relational semantics
there is  the  topological duality  we shall refer to as Priestley/Esakia duality \cite{Esa}.
This specialises Priestley duality for the category
$\CCD$ of bounded distributive lattices to Heyting algebras
and provides a primary tool for the study of such algebras.
  It is  well known that 
a Heyting algebra belongs to $\Gn$ if  and only if
the order of 
its associated Esakia space is a forest of depth at most $n-2$.
Further details are given in Section~\ref{sec:Translation}.  
An overarching objective of this paper may be seen as the development of 
a closer tie-up than hitherto available between natural dualities for
the varieties $\Gn$ and the Priestley/Esakia duality.

 We have aimed to make our results  accessible to those interested in G\"odel algebras {\it per se\/} and  in the applications to amalgamation and coproducts we give in Section~6.  Nonetheless,  we cannot make our 
account  fully  self-contained and shall
refer to Clark and Davey's text \cite{CD98} for background on 
the fundamentals of natural duality theory.
The following  summary of certain key events  
is  directed at those already 
conversant with this theory.  
It will enable us to set in context for such readers what we achieve in this paper. 

It was established in \cite{D76} 
that each variety $\Gn$ is endodualisable (so that the alter ego 
$(C_n; \End \Cn, \Tp)$ yields a duality); in particular $\Gn$ is dually 
equivalent to a category of Boolean spaces acted on  by a monoid of continuous maps. 
 While endodualisability of $\Gn$ 
ensures that the alter ego is of an amenable type, 
the endomorphism monoid 
$\End\Cn$ grows exponentially as $n$ increases. 
This issue was addressed by Davey and Talukder \cite[Section~2]{DT05}, with consideration of optimality within the realm of dualities for $\Gn$ based on endomorphisms.  But there is another approach worthy of consideration. 
 It was to obtain more tractable dualities than those supplied by the NU Duality Theorem,  as it applies in particular
to distributive lattice-based algebras, that Davey and Werner \cite{DW} devised
their  `piggyback method'.  This leads to  a reasonably economical choice of alter ego, which in the case of $\Gn$  contains (the graphs of) the members of a 
family containing both endomorphisms and partial endomorphisms.   
While natural duality theory  was set up from the outset to encompass alter egos containing partial operations, the perception has always been that total structures
are to be preferred, whenever possible.  However, with a recent 
spurt of progress in understanding alter egos, the role  of partial operations
is steadily becoming less mysterious. 
Moreover, adding partial operations to upgrade a duality to a strong, and hence full, duality is  a standard technique for creating full dualities for varieties of 
lattice-based algebras.  Hence including certain partial endomorphisms in an alter ego 
for a variety $\Gn$ may be desirable on grounds of economy, and also 
the best option for  achieving fullness when a full duality is wanted.

The piggyback 
 strategy leans heavily on  Priestley duality for  $\CCD$ 
 and it is natural to go one step further and to seek to relate 
the piggyback natural duality for a $\CCD$-based quasivariety $\CA$
to 
Priestley duality applied to $\fnt{U}(\A)$,  where $\fnt{U} \colon \CA \to \CCD$ is the obvious forgetful functor.   
Indeed, the germ of the idea for the piggyback method is already present 
in Davey's proof of his duality for~$\Gn$ \cite[Theorem~2.4]{D76}.  
Esakia, in his review 
MR0412063 (54 \#192) of \cite{D76}, observed that it would be 
interesting to compare
Davey's duality for $\Gn$ with the duality in Esakia's own  1974 paper \cite{Esa}.   
Davey had already taken the first step here:  within the proof 
of \cite[Theorem~2.4]{D76}  he 
shows how to pass from the natural dual of an algebra 
$\A\in\Gn$ to the Priestley/Esakia dual of its $\CCD$-reduct.  
He does this by identifying the latter  
with a Priestley space obtained as the quotient 
of the natural dual space of $\A$, where the equivalence relation and the ordering on the quotient are determined by the action of the endomorphisms of~$\Cn$.
The present authors in \cite{CPcop} discussed 
an analogous process in the context of a piggyback  duality for 
any finitely generated quasivariety of $\CCD$-based algebras. 
This translation 
was developed to facilitate an analysis of coproducts in finitely generated 
$\CCD$-based quasivarieties.  
Here we take these ideas  further. 
For the dualities we present for each class $\Gn$, we are able to set up a simple
two-way translation, in a functorial way, between the dual categories involved.  
Simplicity stems from our choices of alter ego; these result 
a quotienting process which is particularly easy
to visualise.  
But what is much more significant for applications is the bi-directional nature of our translation.  
We shall refer to the dualities for which
such a translation is available as \defn{interactive}.
With an interactive duality to hand we  have,   
in a strong sense,
a `best of both worlds'
scenario: 
we can harness both  the categorical virtues of a natural duality 
and  the merits of Priestley duality,
specifically its pictorial character and the fact that its is a strong natural duality.

We now outline  the structure of this paper and indicate, in somewhat more detail
than above, what we achieve.
 In Section~2  we conduct a  detailed analysis of the elements in the hom-sets
$\Gn(\A,\Cn)$ for $\A \in \Gn$.
This includes 
investigation  of the images of these maps in 
$\CCD(\fnt{U}(\A),\two)$ under  $\Phi_{\w} \colon x \mapsto \w \circ x$, 
where  $\w \colon \fnt{U}(\Cn) \to \{0,1\}$ is the $\CCD$-homomorphism which sends  the top element of $\Cn$ to~$1$ and all other elements to~$0$. 
Some of our results are well known (surjectivity of $\Phi_\w$, 
for example, is a key component in the validation of Davey and 
Werner's piggyback duality) and
certain ingredients 
are common to our treatment of endomorphisms and that of Davey \cite{D76}, but 
other results  are new.
 In particular Lemmas
\ref{lem:isoupchain}--\ref{lem:endo-range} 
go beyond what appears in the existing literature, and suggest 
that particular  partial endomorphisms and endomorphisms of~$\Cn$ may be
good  candidates for inclusion in an alter ego for $\Cn$  tailored to  a smooth
translation between the associated natural duality for $\Gn$ and Priestley duality.
In Section~\ref{sec:natdual} we combine duality theory's Test Algebra Lemma with an adaptation of  the proof of the traditional Piggyback Duality Theorem 
to obtain a family of new dualities for $\Gn$   (for $n \geq 4$).     
Each 
includes in the alter ego $n-3$ partial endomorphisms 
and one of $n-2$ endomorphisms.  
In Section~\ref{sec:Translation} we confirm that our new dualities are indeed tailor-made for two-way translation. 
 We adapt 
the strategy used in \cite[Theorem~2.4]{CPcop} to pass, with the aid of 
$\w$, from the natural dual of an algebra to the Priestley dual of its reduct. 
More significantly, the results in Section~\ref{sec:prelim} allow us also to go back again
(Theorem~\ref{thm:going back}).

   We demonstrate  the 
 power of our interactive dualities  first in
Section~\ref{Sec:Fullness}.  We are able to show 
that for each $n$ only one of our $2^{n-3}\times (n-2)$
 dualities is full. 
(We recall that 
Davey \cite{D76} showed that $\End\Cn$ does not yield a full duality on $\Gn$ when $n \geq 4$ and it is known 
that the entire monoid 
$\PE\Cn$ of partial and total  
endomorphisms of $\Cn$ 
yields a full duality.)
Section~\ref{sec:coprod} is devoted to two different applications: to amalgamation and to coproducts.
It follows from results of Maksimova 
\cite{Maks} that $\Gn$ fails to satisfy the amalgamation property 
if $n \geq 4$, so that not every 
$V$-formation admits amalgamation in $\G_n$. 
We use our two-way translation to determine which
$V$-formations 
do admit amalgamation in $\G_n$. 
This result is based on the categorical properties of natural dualities 
and the fact that Priestley duality maps injective homomorphisms 
to surjective continuous maps.
Finally we extend our work on coproducts  \cite{CPcop}
by adding G\"odel algebras to the catalogue of examples 
provided there. 
  We employ our two-way translation to give a procedure for 
describing coproducts in $\Gn$ and, in certain cases,  in $\G $ too.
Our
method
provides a simple alternative to the procedure
presented by  D'Antona and Marra \cite{DM} in the case of finite 
G\"odel algebras (they 
employ solely 
Priestley/Esakia duality) and by Davey \cite[Section~5]{D76} for 
algebras in $\Gn$ (he uses only his 
natural duality for $\Gn$).

We elect 
to formulate our principal results about dualities for $\Gn$
 under the assumption $n\geq 4$
since to encompass $n=2,3$ would
complicate the statements and contribute little that is new.  However, {\it mutatis mutandis}, the special cases
can be fitted into our general scheme, and we make brief comments as we proceed 
to confirm this.

  \section{G\"odel algebras} 
\label{sec:prelim}

An algebra $\A=(A; \wedge,\vee,\to,\bot,\top)$ is  a \defn{Heyting algebra} if 
$(A;\wedge,\vee,\bot,\top)$ is a bounded distributive lattice and 
$a\wedge b\leq c$ if and only if $a\leq b\to c$.  
A basic reference
for the algebraic properties of Heyting algebras is \cite[Chapter~IX]{BaDw}.
We shall denote the variety of Heyting algebras by  $\cat H$.  
We make this, and likewise any other class of algebras with which 
we work, into a category by taking as morphisms all 
homomorphisms.

It will be important 
that any Heyting algebra 
has a reduct 
 in the variety~$\CCD$ of bounded 
distributive lattices.  
More precisely, we have a forgetful functor $\fnt{U}\colon 
\cat H \to \CCD$ 
that on objects sends  
$\A = (A; \land,\lor,\to ,\bot,\top)$ to $(A; \land ,\lor,\bot,\top)$
and  sends any morphism, regarded as a map,  to the same map.  
Heyting algebras are rather special amongst  algebras with reducts 
in $\CCD$ in that the implication~$\to$ is uniquely determined by 
the underlying order. 

The variety $\G$ of G\"odel algebras is the subvariety of $\cat H$
consisting of those algebras which satisfy
the pre-linearity equation $(a\to b)\lor (b\to a)\approx \top$. 
It is a consequence of pre-linearity 
that every subdirectly irreducible G\"odel algebra is a chain. 
Moreover $\G$ is generated as a variety by any infinite Heyting 
chain, and its proper subvarieties are precisely the varieties 
generated by finite chains~\cite{HK}.

Consider the $n$-element chain with elements $0,1,\dots, n-1$ 
labelled so that $0 < 1 < \dots < n-1$.  
Then  we define 
$\Cn =(\{0,1, \dots,n-1\}; \land,\lor,\to, \bot, \top)$,  where 
the constants $\bot$ and $\top$ are taken to be 
the bounds $0$ and $n-1$
and 
\[
a\to b = \begin{cases}
                 \top &\text{if } a \leq b,\\
                  b  & \text{if } b < a.
               \end{cases}
\]
Trivially, every homomorphic image of $\Cn$ is a chain and 
therefore isomorphic to a subalgebra of $\Cn$, whence it follows 
that $\ISP(\Cn)=\HSP(\Cn)$. 
Thus~$\Gn$, defined earlier to be the quasivariety 
generated by~$\Cn$,  is also 
 the variety  generated by 
$\Cn$. 
The lattice of subvarieties of the variety $\G$
 is the chain
\[
\G_1\subseteq \G_2\subseteq\cdots\subseteq\G.
\]
Here $\G_1$ is generated by the trivial algebra and 
$\G_2$ 
 is term-equivalent to the variety of Boolean algebras.

In~\cite{H1969}  it is proved that a Heyting algebra $\A$ is a 
G\"odel algebra if and only if the set  of its prime lattice filters  
forms a forest, that is, the set of prime filters that contain a given 
prime filter forms a chain (see
 \cite{HK}). 
Henceforth, following the natural
 duality approach, we work with homomorphisms into $\two$ 
rather than with
prime filters:   
$\A$ is a G\"odel algebra if and only if  $\CCD(\fnt{U}(\A),\two)$,  ordered 
pointwise,  is a forest.
As we  have noted already, 
$\A$ belongs to 
$\Gn$ if and only if the forest  $\CCD(\fnt{U}(\A),\two)$ has depth 
at most $n-2$. 
  This result is well known but hard to attribute; 
it can be seen as a consequence of 
Lemmas~\ref{lem:isoupchain} and~\ref{lem:chain-hom} below.  
For completeness we 
belatedly 
recall  the definition of depth.
Assume we have a poset $P$  
with the property that for every $p \in P$ the up-set~${\uparrow}p$  
does not contain an infinite ascending chain.  Then 
for $p \in P$ we define 
$$
  d(p)=\Max \{\,|C|-1\mid C\subseteq {\uparrow} p\mbox{ and } C \mbox{ is a chain}\,\}.
$$ 
If  $\{ d(p) \mid p \in P\}$ is bounded above, then the  
\defn{depth} of~$P$ is defined to be $\sup\{ d(p) \mid p \in P\}$.
We note for future use the fact that
in a poset  of finite depth the order relation determines and is determined by the associated covering relation, which we denote by $\newcover$.

An algebra $\A$ in a quasivariety
$\CA = \ISP(\M)$ is  
 determined 
by the homomorphisms from~$\A$ into 
$\M$.   This fact underlies the centrality in natural duality 
theory of the hom-sets $\CA(\A,\M)$ for~$\A \in\CA$.
Accordingly 
we shall  assemble a number of 
results about homomorphisms from an algebra $\A\in \Gn$ into $\Cn$. 
We indicated already in Section~\ref{sec:intro}
the importance of  the $\CCD$-homomorphism 
$\omega\colon
\fnt{U}( \Cn)\to \two$ defined by
$\omega(\top)=1$ and $\omega(k)=0$ for 
$k < \top$. 
 This reflects the key role 
played by the pre-image of the constant $\top$  in 
the 
study of homomorphisms between Heyting algebras. 
 For any $\A \in \Gn$ and any $f \in \Gn(\A,\Cn)$ the 
map  $\w \circ f \in \CCD(\fnt{U}(\A),\two)$ and 
$f^{-1}(\top)=(\w \circ f)^{-1}(1)$.

\begin{lemma}  \label{lem:isoupchain} 
Let $\A\in\Gn$ and $x\in\Gn(\A,\Cn)$.
For each $i\in \ran x\setminus\{0\}$ let $u_i\colon\A\to \two$ 
be defined by  $u_i(a)=1$ if and only if $x(a)\geq i$. 
Then the assignment $i\mapsto u_i$ determines a bijection 
between $\ran x\setminus\{0\}$ and
the subset\, ${\uparrow} (\w\circ x)$ of  $\CCD(\fnt U(\A),\two)$.
In particular   $|{\uparrow} (\w\circ x)|\leq n-1$
for each~$x\in \Gn(\A,\Cn)$.

Moreover, if $x,y \in \Gn(\A, \Cn)$, 
then 
\[\omega\circ x \, \newcover \, \omega \circ y \ \Longleftrightarrow\  
x^{-1}(\top)\subseteq y^{-1}(\top) \mbox{
and }|\ran x| -1=|\ran y|. \]
\end{lemma}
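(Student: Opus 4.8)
The plan is to establish the bijection in the first part by hand and then read off the covering criterion in the ``moreover'' clause from it, using only that $\CCD(\fnt U(\A),\two)$ is a forest (so up-sets are chains) together with the Heyting structure of $\Cn$. Throughout I would identify an element $v$ of $\CCD(\fnt U(\A),\two)$ with the prime lattice filter $v^{-1}(1)$, and recall that $(\w\circ x)^{-1}(1)=x^{-1}(\top)$.

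First I would check that each $u_i$ genuinely lies in ${\uparrow}(\w\circ x)$. For $i\in\ran x\setminus\{0\}$ the map $\chi_i\colon\Cn\to\two$ sending $k\mapsto 1$ iff $k\geq i$ is the characteristic map of the prime filter ${\uparrow}i$ of the chain $\Cn$ (proper since $i\geq 1$, and sending $\top$ to $1$ since $i\leq n-1$), hence a $\CCD$-morphism; and $u_i=\chi_i\circ x$, so $u_i\in\CCD(\fnt U(\A),\two)$. Since $(\w\circ x)(a)=1$ forces $x(a)=\top\geq i$, we get $u_i\geq\w\circ x$ pointwise. Injectivity of $i\mapsto u_i$ is immediate: if $i<j$ then choosing $a$ with $x(a)=i$ (possible as $i\in\ran x$) gives $u_i(a)=1$ but $u_j(a)=0$.

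The hard part is surjectivity, and this is where I would use the Heyting structure directly (no forest property is needed here). Given $v\in{\uparrow}(\w\circ x)$, put $F=v^{-1}(1)$; this is a proper prime filter containing $x^{-1}(\top)$. Set $i=\min\{x(a)\mid a\in F\}$, which exists because $F\ni\top$ and $\ran x$ is finite. I claim $F=\{a\mid x(a)\geq i\}$, which says exactly $v=u_i$. The inclusion $\subseteq$ is the definition of $i$. For $\supseteq$, suppose $x(a)\geq i$ and pick $b\in F$ with $x(b)=i$; then $x(b)\leq x(a)$, so $x(b\to a)=x(b)\to x(a)=\top$ by the implication rule, whence $b\to a\in x^{-1}(\top)\subseteq F$. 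As $b\in F$ and $b\wedge(b\to a)\leq a$, upward closure of $F$ gives $a\in F$. Finally $i\geq 1$: otherwise $i=0$ would force $F=A$, contradicting properness of $F$. Thus $i\in\ran x\setminus\{0\}$ and the map is onto. The cardinality bound then follows from the bijection: $|{\uparrow}(\w\circ x)|=|\ran x\setminus\{0\}|=|\ran x|-1\leq n-1$, using $0=x(\bot)\in\ran x$ and $\ran x\subseteq\{0,\dots,n-1\}$.

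For the ``moreover'' clause I would pass through $\w$ and the chain structure. Writing $F_x=x^{-1}(\top)$ and $F_y=y^{-1}(\top)$, the condition $F_x\subseteq F_y$ is precisely $\w\circ x\leq\w\circ y$ pointwise, so ${\uparrow}(\w\circ y)\subseteq{\uparrow}(\w\circ x)$, and by the first part $|{\uparrow}(\w\circ x)|=|\ran x|-1$ and $|{\uparrow}(\w\circ y)|=|\ran y|-1$. Because $\A\in\Gn$, the poset $\CCD(\fnt U(\A),\two)$ is a forest, so ${\uparrow}(\w\circ x)$ is a chain. For $(\Rightarrow)$, a cover $\w\circ x\,\newcover\,\w\circ y$ gives $\w\circ x<\w\circ y$, hence $F_x\subseteq F_y$, and in the chain ${\uparrow}(\w\circ x)$ the cover condition forces ${\uparrow}(\w\circ x)={\uparrow}(\w\circ y)\cup\{\w\circ x\}$ with $\w\circ x\notin{\uparrow}(\w\circ y)$, so comparing sizes yields $|\ran x|-1=|\ran y|$. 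For $(\Leftarrow)$, from $F_x\subseteq F_y$ I get $\w\circ x\leq\w\circ y$, while $|\ran x|-1=|\ran y|$ forces $|{\uparrow}(\w\circ y)|=|{\uparrow}(\w\circ x)|-1$; since both up-sets are chains with ${\uparrow}(\w\circ y)\subseteq{\uparrow}(\w\circ x)$, this single missing element must be $\w\circ x$ and no point can lie strictly between $\w\circ x$ and $\w\circ y$, giving the cover. The only place where real care is required is the surjectivity step, where the modus-ponens computation via $b\wedge(b\to a)\leq a$ does the essential work; everything else is bookkeeping with cardinalities and the chain property.
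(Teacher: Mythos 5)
Your proposal is correct and, for the part the paper actually writes out (the bijection), it is essentially the paper's own argument: take the minimum of $\{\,x(a)\mid a\in F\,\}$ and use the modus ponens inequality $b\wedge(b\to a)\leq a$ together with $x(b\to a)=\top$; the paper phrases this with the homomorphism $u$ in place of the filter $F=v^{-1}(1)$, and asserts $k\neq 0$ up front rather than deducing it from properness at the end. The paper's proof stops after the bijection, leaving the cardinality bound and the ``moreover'' covering criterion as immediate consequences, and your completion of these is correct; note only that the chain property of ${\uparrow}(\w\circ x)$, which you import from the external forest theorem for G\"odel algebras, could equally be read off from the bijection itself, since the maps $u_i$ are totally ordered ($i\leq j$ implies $u_j\leq u_i$).
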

\begin{proof} 
Certainly, for each 
$i\in\ran x\setminus\{0\}$, 
the map $u_i$ 
is a   $\CCD$-homomorph\-ism 
for which  $u_i \geq \w \circ x$. 
To see that the map $i \mapsto u_i$
 is injective,  we argue as follows. 
Let  $i,j \in \ran x\setminus\{0\}$  and assume that $i < j$.  
Then there exists $a\in \A$ for which  $x(a) = j$.  
This implies that $u_j(a) = 1$  and $u_i(a) =0$,  so $u_i \ne u_j$.

Now let $u\in {\uparrow} (\w\circ x)$.  
Let $k=\min\{x(a)\mid a \in \A \text{ and } u(a)=1\}$.  
Note that $k \ne 0$.
We claim that $u = u_k$.
Certainly $u\leq u_k$.
 Now  assume that  $b\in \A$ is such that
$u_{k}(b)=1$, that is,
 $x(b)\geq k$. 
Choose  $a\in\A$  such that $u(a)=1$ and $x(a)=k$.
Then 
 $u(a\to b)\geq(w\circ x)(a\to b)=\w(x(a)\to x(b))=\w(\top)=1$.
It follows that $u(b)\geq u(a\wedge b)=
u(a\wedge(a\to b))=u(a)\wedge u(a\to b)=1$. 
Therefore $u_k\leq u$. 
\end{proof}

 The final claim in  the following lemma  appears in
\cite[Section~3.5]{DW} (see also the proof of \cite[Theorem 2.4]{D76}), but the lemma gives additional information. 
 We shall denote the power set of $\{0, \ldots, n-1\}$ by 
$\powersetn$.

\begin{lemma}  \label{lem:chain-hom} Let $\A\in\Gn$. 
Let 
\[
T =\{\,(u,V) \in \CCD(\fnt U(\A),\two)\times
\powersetn \mid\, 0,n-1\in V\mbox{ and  } \, |{\uparrow} u|+1 =|V| \, \}.
\]  
Then there exist  well-defined  and mutually inverse maps 
\[\iota_{\A} \colon \Gn(\A,\Cn)  \to T\mbox{\quad  and\quad  }
\gamma_{\A} \colon T\to \Gn(\A,\Cn).\]
The  first of these is 
defined by 
$\iota_{\A} \colon x \mapsto  (\w \circ x, \ran x)$, for 
$x \in \Gn(\A,\Cn)$.
The  map $\gamma_{\A}  $ is defined in the following way: 
let 
$V \in \powersetn$ 
be such that
$V =\{ i_0, i_1, \ldots i_m\}$, 
where  
 $i_0 = 0$, $i_m= n-1$   
and 
$ i_j < i_{j+1}$ in $\Cn $ for 
$0\leq j \leq m$.
Then,   for $a \in \A$,  
\[\gamma_{\A}  (u,V)(a)  = i_k, \text{ where  } 
k=|\{ \,v\in {\uparrow}u\mid v(a)=1\,\} |.
\]
In particular, the map  
$x \mapsto \w \circ x$ 
is a surjection from 
$\Gn(\A,\Cn) $ to $\CCD(\fnt{U}(\A),\two)$.
\end{lemma}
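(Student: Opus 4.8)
The plan is to verify in turn that $\iota_{\A}$ lands in $T$, that $\gamma_{\A}$ lands in $\Gn(\A,\Cn)$, and that the two composites are identities; the closing surjectivity statement then drops out. The one structural fact I will lean on throughout is that, since $\A\in\Gn\subseteq\G$, the poset $\CCD(\fnt U(\A),\two)$ is a forest, so that for each $u$ the up-set ${\uparrow}u$ is a finite chain. I fix the notation ${\uparrow}u=\{t_1>t_2>\dots>t_m\}$ with least element $t_m=u$, where $m=|{\uparrow}u|$; for $(u,V)\in T$ we have $m=|V|-1$.

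First I would dispose of $\iota_{\A}$. Since $\w$ and $x$ are $\CCD$-morphisms so is $\w\circ x$; the bounds give $0=x(\bot)$ and $n-1=x(\top)$ in $\ran x$; and Lemma~\ref{lem:isoupchain} yields $|{\uparrow}(\w\circ x)|+1=|\ran x|$. Hence $\iota_{\A}(x)\in T$. For $\gamma_{\A}$ it is convenient first to record, for $(u,V)\in T$ and $a\in\A$, that $\{v\in{\uparrow}u\mid v(a)=1\}$ is an up-set of the chain ${\uparrow}u$, hence a top segment; writing $k(a)=|\{v\in{\uparrow}u\mid v(a)=1\}|$ this says $t_j(a)=1\iff j\leq k(a)$, and $\gamma_{\A}(u,V)(a)=i_{k(a)}$.

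The heart of the argument is to show $\gamma:=\gamma_{\A}(u,V)$ is a Heyting homomorphism. Preservation of the bounds and of $\land,\lor$ is routine from the reformulation $t_j(a)=1\iff j\leq k(a)$: one gets $k(a\land b)=\min\{k(a),k(b)\}$, $k(a\lor b)=\max\{k(a),k(b)\}$, $k(\bot)=0$ and $k(\top)=m$, and since $k\mapsto i_k$ is order-preserving this already makes $\gamma$ a $\CCD$-homomorphism. The hard part, and the main obstacle, is implication. Here I would invoke the standard description of $\to$ under Esakia duality \cite{Esa}: for $\CCD$-morphisms into $\two$, $v(a\to b)=1$ iff every $w\geq v$ with $w(a)=1$ also has $w(b)=1$. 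Applied along the chain, using ${\uparrow}t_j=\{t_1,\dots,t_j\}$, this reads $t_j(a\to b)=1$ iff no index $l\leq j$ satisfies $k(b)<l\leq k(a)$. A short case split then gives $k(a\to b)=m$ when $k(a)\leq k(b)$ and $k(a\to b)=k(b)$ when $k(a)>k(b)$, which is exactly $\gamma(a)\to\gamma(b)$ computed in $\Cn$. Thus $\gamma\in\Gn(\A,\Cn)$.

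It remains to check the composites. For $\iota_{\A}\circ\gamma_{\A}=\id_T$: since $u=t_m$ is the least element of ${\uparrow}u$ one has $k(a)=m\iff u(a)=1$, so $\w\circ\gamma=u$; and each $i_k$ is attained because the strict inequality $t_k>t_{k+1}$ supplies some $a$ with $k(a)=k$, whence $\ran\gamma=V$. For $\gamma_{\A}\circ\iota_{\A}=\id$: given $x$ with $\iota_{\A}(x)=(u,V)$, Lemma~\ref{lem:isoupchain} identifies ${\uparrow}u$ with $\{u_i\mid i\in\ran x\setminus\{0\}\}$, so $k(a)=|\{i\in\ran x\setminus\{0\}\mid i\leq x(a)\}|$, which equals the position of $x(a)$ within $V$; hence $\gamma_{\A}(u,V)(a)=x(a)$. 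The two maps are therefore mutually inverse. Finally, for the closing claim, given any $u\in\CCD(\fnt U(\A),\two)$ the depth bound gives $|{\uparrow}u|\leq n-1$, so I may choose $V\in\powersetn$ with $0,n-1\in V$ and $|V|=|{\uparrow}u|+1$; then $(u,V)\in T$ and $\w\circ\gamma_{\A}(u,V)=u$, exhibiting $u$ in the image of $x\mapsto\w\circ x$.
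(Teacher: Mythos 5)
Your proof is correct, and in its core it follows the same route as the paper's (necessarily so, since the maps $\iota_{\A}$ and $\gamma_{\A}$ are prescribed in the statement): check well-definedness of both maps and verify they are mutually inverse. The paper's own proof is only a sketch --- it declares the homomorphism property of $\gamma_{\A}(u,V)$ and the inverse computations ``straightforward to check'' --- whereas you supply the substance: in particular, your verification that $\gamma_{\A}(u,V)$ preserves $\to$, via the standard prime-filter description of implication ($v(a\to b)=1$ iff every $w\geq v$ with $w(a)=1$ has $w(b)=1$) applied along the finite chain ${\uparrow}u$, is exactly the ``non-straightforward'' step and your case analysis $k(a)\leq k(b)$ versus $k(a)>k(b)$ is right. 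One structural difference deserves comment: for the final surjectivity claim you choose $V$ using the bound $|{\uparrow}u|\leq n-1$, i.e.\ the well-known characterisation of membership of $\Gn$ by depth at most $n-2$. That is mathematically legitimate (the fact is indeed well known, and the paper quotes it as such), but note that the paper goes the other way round: it treats the surjectivity of $x\mapsto \w\circ x$ as known from \cite{DW} and \cite{D76}, and then uses Lemmas~\ref{lem:isoupchain} and~\ref{lem:chain-hom} to give an \emph{alternative proof} of the depth bound $|{\uparrow}u|\leq n-1$. If one adopts your derivation, that downstream remark becomes circular; to keep the paper's logical economy one should either prove surjectivity independently (e.g.\ by the standard argument: the quotient of $\A$ by the prime filter $u^{-1}(1)$ is a G\"odel chain in $\Gn$, hence has at most $n$ elements and embeds in $\Cn$) or accept the depth bound as an external input and drop the ``alternative proof'' claim.
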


 \begin{proof}
Lemma~\ref{lem:isoupchain} tells us that  $\iota_{\A} $ is a map from 
$\Gn(\A,\Cn) $ into~$ T$.
Since $|{\uparrow} u|+1 =|V|$, the map $\gamma_{\A} (u,V)$ is well defined for each $(u,V) \in T$.
It is straightforward to check that 
 $\gamma_{\A} (u,V)$
is  a
homomorphism from~$\A$ to $\C_n$ for each 
$(u,V) \in T$ 
and 
that  $\gamma_{\A} $ and $\iota_{\A} $ are
mutually 
 inverse. 
\end{proof}

Combining the fact that the map 
$x\mapsto \w\circ x$ 
is a 
surjection from $\Gn(\A,\Cn)$ to $\CCD(\fnt{U}(\A),\two)$ with 
Lemma~\ref{lem:isoupchain}, we obtain an alternative proof  of 
 the well-known fact that $|{\uparrow}u|\leq n-1$ 
for each $\A\in\Gn$ and
$u\in \CCD(\fnt{U}(\A),\two)$.
As a consequence 
 of Lemma~\ref{lem:chain-hom}, we can also
describe the endomorphisms of $\Cn$; 
 cf.~\cite[Lemma~2.2]{DT05}.   In particular,  $e \mapsto \ran e$ 
sets up  a bijection from $\End \Cn$ to 
$\{\,V\in\powersetn\mid 0,n-1\in V\,\}$.

\begin{figure} [ht]
\begin{center}
\begin{tikzpicture}[scale=.8]
\draw[-latex,thin,shorten >= 2pt] (5.2,-1)--(6.3,-1);
\draw[-latex,thin,shorten >= 2pt] (5.2,5)--(6.3,5);
\draw[-latex,thin,shorten >= 2pt] (5.2,0)--(6.3,1);
\draw[-latex,thin,shorten >= 2pt] (5.2,1)--(6.3,2);
\draw[-latex,thin,shorten >= 2pt] (5.2,3)--(6.3,4);
\draw[-latex,thin,shorten >= 2pt] (5.2,4)--(6.3,4.9);

\li{(5,.0)--(5,-1)}
\li{(6.5,0)--(6.5,1)}
\li{(6.5,.0)--(6.5,-1)}
\li{(5,.0)--(5,1)}
\li{(5,4)--(5,5)}
\li{(6.5,4)--(6.5,5)}
\li{(5,3)--(5,4)}
\dotli{(6.5,3)--(6.5,4)}
\dotli{(5,1)--(5,2)}
\dotli{((6.5,1)--(6.5,2)}
\dotli{(5,2)--(5,3)}
\li{(6.5,1)--(6.5,2)}
\dotli{(6.5,2)--(6.5,4)}
\po{5,-1}
\po{6.5,-1}
\po{5,5}
\po{6.5,5}
\po{5,0}
\po{5,1}
\po{5,3}
\po{5,4}
\epo{6.5,0}
\po{6.5,1}
\po{6.5,2}
\po{6.5,4}

\node at (4.2,0) {};
\node at (4.2,1) {};
\node at (4.4,3) {};
\node at (4.2,4) {};
\node at (6.6,0) {};
\node at (6.6,1) {};
\node at (6.8,3) {};
\node at (6.6,4) {};
\node at (5.7,-2.2){$
h_1$};
 \end{tikzpicture} 
\hspace*{.5cm}
\begin{tikzpicture}[scale=.8]
\li{(5,.0)--(5,-1)}
\li{(6.5,0)--(6.5,1)}
\li{(6.5,.0)--(6.5,-1)}
\li{(5,.0)--(5,1)}
\li{(5,4)--(5,5)}
\li{(6.5,3)--(6.5,5)}
\li{(5,3)--(5,4)}
\dotli{(5,1)--(5,2)}
\dotli{(5,2)--(5,3)}
\dotli{(6.5,1)--(6.5,4)}
\po{5,-1}
\po{6.5,-1}
\po{5,5}

\po{5,0}
\po{5,1}
\po{5,3}
\po{5,3}
\po{5,4}
\po{6.5,0}
\po{6.5,1}
\po{6.5,3}
\epo{6.5,4}
\po{6.5,5}

\node at (4.2,0) {};
\node at (4.2,1) {};
\node at (4.4,3) {};
\node at (4.2,4) {};
\node at (6.6,0) {};
\node at (6.6,1) {};
\node at (6.8,3) {};
\node at (6.6,4) {};
\li{(5,.0)--(5,-1)}
\draw[-latex,thin,shorten >=2pt] (5.1,-1)--(6.4,-1);
\draw[-latex,thin,shorten >= 2pt] (5.1,5)--(6.4,5);

\draw[-latex,thin,shorten >= 2pt] (5.2,0)--(6.4,0);
\draw[-latex,thin,shorten >= 2pt] (5.2,1)--(6.4,1);
\draw[-latex,thin,shorten >= 2pt] (5.2,3)--(6.4,3);
\draw[-latex,thin,shorten >= 2pt] (5.2,4)--(6.4,4.9);
\node at (5.7,-2.2){$h_{n-2}$};
\end{tikzpicture} 
\hspace*{.5cm}
\begin{tikzpicture}[scale=.8]
\draw[-latex,thin,shorten >= 2pt] (5.2,-1)--(6.3,-1);
\draw[-latex,thin,shorten >= 2pt] (5.2,5)--(6.3,5);
\draw[-latex,thin,shorten >= 2pt] (5.2,4)--(6.3,4);
\draw[-latex,thin,shorten >= 2pt] (5.2,3)--(6.3,3);
\draw[-latex,thin,shorten >= 2pt] (5.2,1)--(6.3,0);

\li{(5,.0)--(5,-1)}
\li{(6.5,0)--(6.5,1)}
\li{(6.5,.0)--(6.5,-1)}
\li{(5,.0)--(5,1)}
\li{(5,4)--(5,5)}
\li{(6.5,4)--(6.5,5)}
\li{(5,3)--(5,4)}
\li{(6.5,3)--(6.5,4)}
\dotli{(5,1)--(5,2)}
\dotli{((6.5,1)--(6.5,2)}
\dotli{(5,2)--(5,3)}
\dotli{(6.5,2)--(6.5,3)}
\po{5,-1}
\epo{6.5,1}
\po{5,5}
\po{6.5,5}
\po{5,1}
\epo{5,0}
\po{5,3}
\po{5,4}
\po{6.5,0}
\po{6.5,-1}
\epo{6.5,1}
\po{6.5,3}
\po{6.5,4}

\node at (4.2,0) {};
\node at (4.2,1) {};
\node at (4.4,3) {};
\node at (4.2,4) {};
\node at (6.6,0) {};
\node at (6.6,1) {};
\node at (6.8,3) {};
\node at (6.6,4) {};
\node at (5.7,-2.2){$g_1$};
\end{tikzpicture} 
\hspace*{.5cm}
\begin{tikzpicture}[scale=.8]
\draw[-latex,thin,shorten >= 2pt] (5.2,-1)--(6.3,-1);
\draw[-latex,thin,shorten >= 2pt] (5.2,5)--(6.3,5);
\draw[-latex,thin,shorten >= 2pt] (5.2,0)--(6.3,0);
\draw[-latex,thin] 
 (5.3,1)--(6.3,1);

\draw[-latex,thin,shorten >= 2pt] (5.2,4)--(6.3,3);

\li{(5,.0)--(5,-1)}
\li{(6.5,0)--(6.5,1)}
\li{(6.5,.0)--(6.5,-1)}
\li{(5,.0)--(5,1)}
\li{(5,4)--(5,5)}
\li{(6.5,4)--(6.5,5)}
\li{(5,3)--(5,4)}
\li{(6.5,3)--(6.5,4)}
\dotli{(5,1)--(5,2)}
\dotli{((6.5,1)--(6.5,2)}
\dotli{(5,2)--(5,3)}
\dotli{(6.5,2)--(6.5,3)}
\po{5,-1}
\po{6.5,-1}
\po{5,5}
\po{6.5,5}
\po{5,0}
\po{5,1}
\epo{5,3}
\po{5,4}
\po{6.5,0}
\po{6.5,1}
\po{6.5,3}
\epo{6.5,4}

\node at (4.2,0) {};
\node at (4.2,1) {};
\node at (4.4,3) {};
\node at (4.2,4) {};
\node at (6.6,0) {};
\node at (6.6,1) {};
\node at (6.8,3) {};
\node at (6.6,4) {};
\node at (5.7,-2.2){$g_{n-3}$};
\end{tikzpicture} 
\end{center}
\caption{Endomorphisms and partial endomorphisms of~$\Cn$}
\label{fig:pe}
\end{figure}

We shall make use in the next sections of certain 
endomorphisms and partial endomorphisms of $\Cn$. 
For $1\leq i\leq n-2$
we let $h_i$ be the unique endomorphism of $\Cn$ 
with 
$\ran h_i=C_n\setminus \{i\}.$
More precisely,
$h_i(x)=x+1$ if $i\leq x<n-1$ and  $h_i(x)=x$ otherwise.
(These endomorphisms also appear in \cite[Section~2]{CD98}, with $h_i$ denoted~$e_i$,
but the use we make of them is different.)
For $1\leq i \leq n-3$ we define the partial endomorphism 
$g_i$  with domain $C_n \setminus \{ i\}$ as follows:
\[
g_i (x) = \begin{cases}
                x &  \text{if }  x \ne i+1, \\
                i  & \text{if } x = i+1.
\end{cases}
\]
These maps are indeed partial endomorphisms, 
 none of which extends to an endomorphism. 
For $1\leq i \leq n-3$, let  
$f_i\colon C_n \setminus \{ i+1\} \to C_n\setminus \{i\}$ be the 
inverse  of $g_i$.  
For $1\leq i \leq  n-4$ the map $f_i$ is a non-extendable partial 
endomorphism;  $f_{n-3}$
extends to $h_{n-3}$.
Figure~\ref{fig:pe} depicts $h_1$, $h_{n-2}$,  
$g_1$ and $g_{n-3}$; 
corresponding diagrams of  $f_1$ and $f_{n-3}$
are obtained from those of $g_1$ and $g_{n-3}$ by left-to-right 
reflection.   
We fix for future use the following notation:  for $n \geq 4$,
\[
\Sigma_n = \{f_1,g_1\}\times\cdots\times \{f_{n-3},g_{n-3}\}\times
\{ h_1, \ldots, h_{n-2}\}.
\]

In Lemma~\ref{lem:isoupchain} we described, for any given 
$\A \in \Gn$,  the covering relation on the distinct elements of the 
set $\{\, \omega \circ x \mid x \in \Gn(\A,\Cn)\,\}$.  
Below we  complement this result by demonstrating when elements 
$\w \circ x $ and $\w \circ y$ coincide.

\begin{lemma} \label{lem:endo-range} 
Fix $n\geq 4$ and $\sigma\in\Sigma_n$.
Let $\A\in\Gn$ and  let $x,y \in \Gn(\A, \Cn)$ 
be such that  $x \ne y$.    
Then the following statements are equivalent:
\begin{newlist}
\item[{\upshape(1)}] $x^{-1}(\top) = y^{-1}(\top)$;
\item[{\upshape(2)}] $\omega\circ x =\omega \circ y$;
\item[{\upshape(3)}]   there exists  a finite sequence
 $ z_0=x, z_1 ,
\dots, z_N = y$ of elements of $\Gn(\A, \Cn)$ 
with the property that,   for each $0 \leq j< N$,  there is some $i_j\in  \{1,\ldots,n-3\}$ such that
$z_{j+1} = \sigma_{i_j} \circ z_j$ or $z_j = \sigma_{i_j} \circ z_{j+1}$.
\end{newlist} 
\end{lemma}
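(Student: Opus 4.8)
The plan is to dispatch the easy equivalence $(1)\Leftrightarrow(2)$ first and then to split the remaining work into $(3)\Rightarrow(1)$ and $(1)\Rightarrow(3)$, the latter being the substantial direction. For $(1)\Leftrightarrow(2)$ I would simply observe that, for any $z\in\Gn(\A,\Cn)$, the $\CCD$-homomorphism $\w\circ z$ is completely determined by $(\w\circ z)^{-1}(1)=z^{-1}(\top)$, since a map into $\two$ is fixed by the preimage of $1$; hence $\w\circ x=\w\circ y$ exactly when $x^{-1}(\top)=y^{-1}(\top)$.

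For $(3)\Rightarrow(1)$ the key point is that both $f_i$ and $g_i$ fix $\top$ and that $\top$ has no other preimage under either map: indeed $g_i$ and $f_i$ alter only the adjacent pair $\{i,i+1\}$ with $i\le n-3$, so $i+1<\top$. Consequently $\sigma_i^{-1}(\top)=\{\top\}$, and for any $z$ whose range lies in the relevant domain we have $(\sigma_i\circ z)^{-1}(\top)=z^{-1}(\sigma_i^{-1}(\top))=z^{-1}(\top)$. Thus each elementary step in a sequence as in (3)---whether of the form $z_{j+1}=\sigma_{i_j}\circ z_j$ or $z_j=\sigma_{i_j}\circ z_{j+1}$---leaves the preimage of $\top$ unchanged, and transitivity gives $x^{-1}(\top)=y^{-1}(\top)$.

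The heart of the argument is $(1)\Rightarrow(3)$. Writing $u:=\w\circ x=\w\circ y$, I would use Lemma~\ref{lem:chain-hom} to parametrise the fibre $\{\,z\in\Gn(\A,\Cn)\mid \w\circ z=u\,\}$ by the family $\mathcal V_u$ of sets $V\in\powersetn$ with $0,n-1\in V$ and $|V|=|{\uparrow}u|+1$, the bijection being $V\mapsto x_V:=\gamma_{\A}(u,V)$, for which $\ran x_V=V$ (as $\iota_{\A}$ and $\gamma_{\A}$ are mutually inverse). Since $x$ and $y$ lie in the same fibre they have equal range cardinality, so it suffices to join $V:=\ran x$ to $W:=\ran y$ inside $\mathcal V_u$. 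The crucial computation is that, whenever a member of $\mathcal V_u$ contains exactly one of the adjacent values $i,i+1$ (with $1\le i\le n-3$), the fixed map $\sigma_i\in\{f_i,g_i\}$ can be used---forwards in one direction, backwards in the other---to replace that value by its neighbour while preserving membership of $0,n-1$ and cardinality, and hence (by the uniqueness in Lemma~\ref{lem:chain-hom}) to pass between $x_V$ and the homomorphism whose range is the modified set. Concretely, interpreting $\ran z\cap\{1,\dots,n-2\}$ as a configuration of indistinguishable tokens on the path $1-2-\cdots-(n-2)$, each such step is exactly a slide of a token into an adjacent empty position, and the admissibility of the relevant composition follows from the domain conditions $\dom g_i=C_n\setminus\{i\}$ and $\dom f_i=C_n\setminus\{i+1\}$.

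It then remains to invoke the elementary combinatorial fact that on a path any two token configurations with the same number of tokens are connected by such slides---for instance by sliding all tokens into the canonical left-justified position and using the reversibility of the moves. Applying this to $V$ and $W$ yields a sequence of ranges, which lifts through $V\mapsto x_V$ to the required sequence $z_0=x,\dots,z_N=y$ witnessing (3). I expect the main obstacle to be the bookkeeping in the crucial computation: one must check carefully, for each of the four combinations of $\sigma_i\in\{f_i,g_i\}$ with an up- or a down-move, that the domain condition holds and that composing (in the correct forward or backward direction) produces precisely the homomorphism indexed by the modified range. Once this dictionary between compositions and token slides is in place, the connectivity argument is routine.
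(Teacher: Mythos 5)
Your proposal is correct and takes essentially the same route as the paper's proof: the equivalences (1)$\Leftrightarrow$(2) and (3)$\Rightarrow$(1) are argued identically, and your token-sliding connectivity argument---normalising both range sets to the left-justified configuration and using reversibility of the moves---is precisely the paper's reduction of both $x$ and $y$ to the canonical fibre element $\gamma_{\A}(\w\circ x,\{0,\ldots,m-2\}\cup\{n-1\})$ via compositions of the $g_i$, with Lemma~\ref{lem:chain-hom} supplying the same parametrisation of the fibre. The only cosmetic difference is that the paper first assumes $\sigma_i=g_i$ without loss of generality (since $f_i$ and $g_i$ are mutually inverse, an $f_i$-step is a backward $g_i$-step), whereas you verify the four combinations of map and direction explicitly.
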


\begin{proof} 
Conditions  (1) and (2) are equivalent since  
$\omega(k)=\top$ if and only if~$k=~\top$.

Since $f_i$ and $g_i$ are inverses of each other, without loss of 
generality we may assume that $\sigma_i=g_i$ for each 
$i\leq n-3$.
Since $g_i(k) = \top$ if and only if $k = \top$,  (3) implies~(1).
It remains to show that (2) implies (3).  
By Lemma~\ref{lem:isoupchain} and condition (2), 
$|\ran x|=|\ran y|$.
Let 
$m=|\ran x|=|\ran y|$ 
and 
$V=\{0,\ldots,m-2\}\cup\{n-1\}$. 
Now let 
$u=\gamma_{\A} (\w\circ x,V)$
as defined in Lemma~\ref{lem:chain-hom}. 
It is easy to see that either 
$x= u$ 
or there exists a sequence of
 $j_1,\ldots,j_{\ell}$ of elements of $\{1,\ldots,n-3 \}$ such that 
$u=g_{j_1}\circ g_{j_2}\circ\cdots \circ g_{j_{\ell}}\circ  x$. 
Similarly, $y=u$ 
or  
$u=g_{k_1}\circ g_{k_2}\circ\cdots \circ g_{k_{m}}\circ  y$ 
for some $k_1,\ldots,k_{m}\in\{1,\ldots,n-3 \}$. 
Since $x \ne y$ we cannot have both $x =u$ and $y = u$.  
Considering the three remaining possibilities in turn it is easy to see 
that (3) holds in each case.     
 \end{proof}

\section{Natural dualities for $\Gn$}\label{sec:natdual}

In what follows we shall use Priestley duality as an ancillary tool.  
We shall assume familiarity with basic facts concerning this 
prototypical natural duality (to be found in \cite{CD98}
and \cite[Chapter~11]{ILO2}), but we do need to establish notation.  
We denote the  category  of Priestley spaces by $\CP$.  
 We can express $\CCD$ and $\CP$ as, respectively,
$\ISP(\two)$ and $\IScP(\twoT)$, where 
$\two = (\{0,1\}; \land,\lor ,0,1)$ and 
$\twoT = (\{0,1\}; \leq ,\Tp)$;  
here $\Tp$ is the discrete topology and $\IScP(\twoT)$ is the class 
of isomorphic copies of closed substructures of powers of $\twoT$.  
  Here we
shall use non-generic symbols $\fnt{H}$ and $\fnt{K}$ for 
the hom-functors $\fnt{H} = \CCD(-,\two)$  
and $\fnt{K} = \CP(-,\twoT)$ which set up a dual equivalence 
between $\CCD$ and $\CP$. 
 Given $\Lalg\in\CCD$, the evaluation map
 $k_{\Lalg}\colon \Lalg \to  \fnt{KH}(\Lalg)$ is   defined by
$k_{\Lalg}(a) (u) = u(a)$, 
for $a \in \Lalg$ and 
$u \in \fnt{H}(\Lalg)$; 
this map is an isomorphism.  
We refer to the Priestley space $\fnt{H}(\Lalg)$ as the 
\defn{Priestley dual} of $\Lalg$.

We now turn to  natural dualities more generally.
We shall confine attention to the varieties 
$\Gn=\ISP(\Cn) $ that interest us,
referring  to \cite{CD98} any  reader who requires an account in 
a more general setting. 
We note at the outset that 
it will suffice for our purposes  to consider a  more restricted form of alter 
ego than is allowed for in \cite{CD98}.
(We also remark that we have no need in this paper to consider natural dualities
which are multisorted.)
We consider  
a topological structure
$\CnT = (C_n; G, H, \Tp)$, where $G \subseteq \End \Cn$, $H \subseteq \PE \Cn
\setminus \End \Cn$ 
(the (non-total) partial endomorphisms of~$\Cn$), and  $\Tp$ is  the discrete topology.  
We refer to
 $\twiddle{\C_n}$ as an \defn{alter ego} for $\Cn$.
We define 
$\CXn$ 
to be the topological quasivariety  generated by $\CnT$,
 {\it viz.}~$\CXn = \IScP(\CnT)$:  a topological structure of the 
same type as~$\CnT$  belongs   to $\CXn$ if and only if it is 
isomorphic to a closed substructure of a power of $\CnT$;
here operations and partial operations 
are lifted  pointwise.
 The superscript $^+$ serves to indicate that the 
empty structure is included in $\CXn$.  
The morphisms of $\CXn$  are the continuous 
structure-preserving maps. 

We define  hom-functors $\D \colon \Gn \to  \CXn$ and 
$\E \colon \CXn \to \Gn$  as follows:
\begin{alignat*}{2} 
 & \D  \colon \Gn\to \CXn,  \qquad && \hbox{$
          \begin{cases}
                \D (\A)= \Gn(\A,\Cn) \\
                \D (f) = - \circ f,
          \end{cases}$}  \\  
 & \E \colon \CXn \to \Gn,  \qquad && \hbox{$
           \begin{cases}
                 \E (\X)= \CXn(\X,\CnT)\\
                 \E (\phi)  = - \circ \phi ;
           \end{cases}$}  
\end{alignat*}
here
$\Gn(\A,\Cn)$ is considered as a substructure of $(\twiddle{\C_n})^A$
and $\CXn(\X,\CnT)$ inherits its algebra structure pointwise from~$\Cn$.
A crucially important fact is that these 
 functors are well defined. 
This is a consequence of our assumption that we include in $\CnT$
only operations and partial operations which are 
algebraic.
 Moreover, for each  $\A \in \Gn$,  
the evaluation map  $e_{\A}$, given by 
$e_{\A} (a)(x) = x(a)$ (for $a \in \A$ and $x \in \D(\A)$), is an 
embedding from $\A $ to $\E\D(\A)$.   
Likewise, for each $\X \in \CXn$, 
the map $\varepsilon_{\X}$ given by 
$\varepsilon_{\X}(\phi)(\alpha) = \alpha(\phi)$ (for  $\phi \in \X$ 
and $\alpha\in\E(\X)$) is an embedding. 
In categorical terms, $(\D,\E, e,\varepsilon)$ is a dual adjunction 
between $\Gn$ and $\CXn$ with the unit and counit of the 
adjunction given by the evaluation maps.
 (See \cite[Chapter~2]{CD98} for a justification  of 
these assertions in a general setting.)    
Let $\A \in \Gn$.  We say that $\CnT$  (or just $G \cup H$) \defn{yields a duality on} $\A$ 
if $e_{\A}$ is an isomorphism from $\A$ to $\E\D(\A)$ and that 
$G\cup H$ \defn{yields a duality on} $\Gn$ 
if it yields a duality on each $\A \in \Gn$.    
For later  use, we say   that a dualising alter ego 
$\CnT$ \defn{yields a full duality on} $\Gn$ if 
$\D\E(\X) \cong \X$ for all $\X \in \CXn$. 

We shall need the following result. It is obtained by specialising the 
Test Algebra Lemma to the very  particular situation that concerns 
us.
See  \cite[Section~8.1]{CD98} for the   general version of this result  
and contextual discussion.
We reiterate that $\Gn$ is endodualisable so that the assumptions 
of Lemma~\ref{lem:TAL}  are met when $G=\End \Cn$.

\begin{lemma}
\label{lem:TAL}{\bf (Test Algebra Lemma, special case)}
Let $(C_n;G,  H,\Tp)$  be an alter ego of $\Cn$ which is such that 
$G \subseteq \End \Cn$, $H \subseteq \PE \Cn\setminus \End \Cn$
and $G \cup H$ yields a duality on 
$\Gn$.
Let $e \in G$.  
Then $ (G \setminus \{e\}) \cup H $ yields a duality on $\Gn$ provided it 
yields a duality on the single algebra~$\Cn$. 
\end{lemma}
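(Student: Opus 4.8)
The plan is to use that the evaluation map $e_{\A}\colon\A\to\E'\D'(\A)$ is \emph{automatically} an embedding, so only its surjectivity is at stake. Here $\D',\E',\CX_n'$ denote the hom-functors and dual category attached to the reduced alter ego $(C_n;G\setminus\{e\},H,\Tp)$. Since $\D'(\A)$ and $\D(\A)$ share the underlying set $\Gn(\A,\Cn)$, and $\CX_n'$ demands preservation of strictly less structure than $\CX_n$, a $\CX_n'$-morphism $\alpha\colon\D'(\A)\to(C_n;G\setminus\{e\},H,\Tp)$ is precisely a $\CX_n$-morphism $\D(\A)\to\CnT$ as soon as it also respects the operation $e$. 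By the standing hypothesis that $G\cup H$ yields a duality on $\Gn$, every such $\CX_n$-morphism is an evaluation $e_{\A}(a)$. Hence the whole lemma reduces to a single claim: \emph{every} $\CX_n'$-morphism $\alpha\colon\D'(\A)\to(C_n;G\setminus\{e\},H,\Tp)$ preserves $e$.

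To prove this claim I would argue one point at a time, using $\Cn$ as a test object. Fix $x\in\D(\A)=\Gn(\A,\Cn)$ and consider $\D(x)\colon\D(\Cn)\to\D(\A)$, $t\mapsto t\circ x$, recalling that $\D(\Cn)=\End\Cn$. Being a morphism of $\CX_n$, the map $\D(x)$ preserves all the operations in $G\cup H$, so a fortiori it is a $\CX_n'$-morphism $\D'(\Cn)\to\D'(\A)$. Composing, $\alpha\circ\D(x)\colon\D'(\Cn)\to(C_n;G\setminus\{e\},H,\Tp)$ is a $\CX_n'$-morphism, that is, an element of $\E'\D'(\Cn)$.

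Now the hypothesis on the test algebra enters: since the reduced alter ego yields a duality on $\Cn$, the map $e_{\Cn}\colon\Cn\to\E'\D'(\Cn)$ is onto, so $\alpha\circ\D(x)=e_{\Cn}(c)$ for some $c\in C_n$; unwinding the definitions, this reads $\alpha(t\circ x)=t(c)$ for all $t\in\End\Cn$. Taking $t=\id$ gives $c=\alpha(x)$, and then taking $t=e$ gives $\alpha(e\circ x)=e(\alpha(x))$. Since $e$ acts on $\D(\A)$ by $x\mapsto e\circ x$ and $x$ was arbitrary, $\alpha$ preserves $e$, which is exactly the claim; this completes the reduction and hence the proof.

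The argument is short, and the only genuinely substantive step is the conceptual one: recognising that the action of $e$ on an arbitrary dual $\D(\A)$ can be probed through the morphisms $\D(x)$ emanating from the fixed test algebra $\Cn$, so that duality on $\Cn$ alone controls the preservation of $e$ at every $\A$. The remaining points need only a moment's care: that $\D(x)$ really is a morphism of the reduced structures (immediate, since it preserves the larger structure), and that both $\id$ and $e$ lie in $\End\Cn=\D(\Cn)$, which legitimises the two evaluations.
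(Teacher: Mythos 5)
Your proof is correct, but it takes a genuinely more self-contained route than the paper. The paper's proof is two lines: it cites the general Test Algebra Lemma of Clark and Davey (removing $e$ from the alter ego is harmless provided the reduced alter ego yields a duality on $\graph e$ regarded as an algebra in $\Gn$), and then observes that $\graph e \cong \Cn$ because $e$ is a total endomorphism, so the test algebra may be taken to be $\Cn$ itself. You never invoke the general lemma or mention $\graph e$; instead you prove the needed instance from scratch: reduce the problem to showing that every morphism $\alpha$ of the reduced structures preserves the lifted action of $e$, then probe each point $x \in \D(\A)$ through the dual morphism $\D(x)\colon \D(\Cn)\to\D(\A)$ and use surjectivity of the evaluation $e_{\Cn}$ for the reduced alter ego to write $\alpha(t\circ x)=t(c)$ for all $t\in\End\Cn$, extracting preservation of $e$ by taking $t=\id$ and $t=e$. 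This is in substance the proof of the general Test Algebra Lemma inlined and specialised: your pair $(\id,e)$ plays exactly the role of the two coordinate projections $\graph e\to\Cn$ in that proof, and the identification of $u\colon\A\to\graph e$, $a\mapsto(x(a),e(x(a)))$, with $x$ itself is where the isomorphism $\graph e\cong\Cn$ is silently used. What the paper's approach buys is brevity and placement of the result in its standard context; what yours buys is a self-contained argument that makes transparent precisely why duality on the single algebra $\Cn$ controls preservation of $e$ on every dual space $\D(\A)$, with no black box. All the supporting steps you flag (that $\D(x)$ is a morphism of the reduced structures, that the evaluation maps are automatically embeddings so only surjectivity is at stake, and that $\id,e\in\End\Cn=\D(\Cn)$) are indeed correct and standard.
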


\begin{proof}    
The Test Algebra Lemma  in its general form 
 tells us that we can discard~$e$ from $G$ without destroying the 
duality so long as $ (G \setminus \{e\})\cup H$ 
yields a duality on  
$\graph e$,  regarded as an algebra in $\Gn$.  
But the graph of any endomorphism is isomorphic to $\Cn$.  
\end{proof}

We contrast the use of $\Cn$ as a test algebra with that employed   
in \cite[proof of Theorem~2.4]{DT05}.   
There Davey and Talukder  identify a particular 
generating set $G$ 
 for $\End \Cn$.   
They then show that the  duality it yields is optimal 
by showing that $G
 \setminus \{ e\}$ does not yield a 
duality on the particular algebra $\Cn$, for any $e \in G$.  
This means that they are using the Test Algebra Lemma to  guide 
the choice of an  algebra that witnesses  indispensability  of each 
member of their set~$G$.  
We use the Test Algebra Lemma in the opposite  direction: 
the lemma   tells us that to prove that a given endomorphism  $e$ 
can be dropped from a dualising alter ego  it suffices to test
this on the \emph{single} algebra  $\graph e$---we do not have to 
verify that $(G \setminus \{ e\}) \cup H$ yields a duality on 
\emph{every} $\A \in \Gn$.      

Henceforth, unless indicated otherwise,  
we shall consider  varieties $\Gn$ 
for which $n\geq 4$.
 We include the endomorphism~$h_1$  
(as defined in Section~\ref{sec:prelim})
in our alter ego for $\Cn$,
rather than any alternative  endomorphism, because
this  makes it particularly easy to establish Claim~4
of the proof of Proposition~\ref{prop:piggy}.  We  adopt a more even-handed attitude to 
endomorphisms in Theorem~\ref{thm:newduality}. 
The proof of  the proposition 
draws very heavily on  the ideas
 used to prove the Piggyback Duality Theorem 
\cite[Theorem~7.2.1]{CD98},  as this  applies to a quasivariety  
$\ISP(\M)$, where $\M$ is a finite algebra with a reduct in $\CCD$.

\begin{proposition}  \label{prop:piggy}
Let the partial endomorphisms $g_1, \ldots, g_{n-3}$  and 
endomorphism $h_1$ be defined as in Section~{\upshape\ref{sec:prelim}}.
Then  $\{ g_1, \ldots g_{n-3},h_1\}$  
yields a duality on the algebra $\Cn$.
\end{proposition}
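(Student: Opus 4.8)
The plan is to show that the evaluation embedding $e_{\Cn}\colon \Cn \to \E\D(\Cn)$ is onto; that it is an embedding is already recorded in the text, so surjectivity is all that remains. Since $\D(\Cn) = \Gn(\Cn,\Cn) = \End\Cn$ is finite, a member $\alpha$ of $\E\D(\Cn) = \CXn(\D(\Cn),\CnT)$ is simply a map $\alpha\colon \End\Cn \to C_n$ that preserves $h_1$ (hence every power $h_1^{\,j}$) and each partial operation $g_i$, continuity being automatic. I must produce $a \in C_n$ with $\alpha(x) = x(a)$ for every $x \in \End\Cn$, and I will do this in two stages: first locate $a$ from the behaviour of $\alpha$ ``at the top'', then bootstrap to all values using powers of $h_1$.

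For the first stage I imitate the piggyback argument through $\w$. Put $\beta := \w \circ \alpha\colon \End\Cn \to \two$, and aim to factor $\beta$ through $\Phi_\w\colon x\mapsto \w\circ x$ as an order-preserving map $\bar\beta\colon \CCD(\fnt U(\Cn),\two)\to\twoT$. Granting this, $\bar\beta$ is a $\CP$-morphism, so by Priestley duality $\bar\beta = k_{\fnt U(\Cn)}(a)$ for some $a \in C_n$, whence $\w(\alpha(x)) = \bar\beta(\w\circ x) = (\w\circ x)(a) = \w(x(a))$; that is, $\alpha(x) = \top \iff x(a)=\top$ for all $x$. Surjectivity of $\Phi_\w$ is Lemma~\ref{lem:chain-hom}, so $\bar\beta$ is determined once $\beta$ is shown to be constant on each fibre of $\Phi_\w$ and to respect the order. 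Constancy on fibres is exactly where the partial endomorphisms earn their keep: two endomorphisms $x,y$ with $\w\circ x = \w\circ y$ satisfy $x^{-1}(\top)=y^{-1}(\top)$, so by Lemma~\ref{lem:endo-range} they are linked by a chain of applications of the $g_i$ (in either direction); as each $g_i$ fixes $\top$ and sends nothing else to $\top$, and $\alpha$ preserves $g_i$, one gets $\alpha(x)=\top \iff \alpha(y)=\top$, i.e. $\beta(x)=\beta(y)$.

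The delicate point, and the step I expect to be the main obstacle, is the order-preservation of $\bar\beta$. By Lemma~\ref{lem:isoupchain} the value $\w\circ x$ depends only on $|\ran x|$, so the fibres of $\Phi_\w$ are precisely the classes of fixed range-size and $\CCD(\fnt U(\Cn),\two)$ is the chain these classes form, with smaller range-size higher up. Thus order-preservation of $\bar\beta$ says: if $\alpha(x)=\top$ for some $x$ of range-size $m+1$, then $\alpha(x')=\top$ for some $x'$ of each smaller range-size. To lower the range-size by one I must combine the two kinds of operation. Using the $g_i$ I first move, within the fibre and preserving $\alpha(\cdot)=\top$ as above (Lemma~\ref{lem:endo-range}), to an endomorphism $y$ whose range contains $n-2$ (possible as soon as the range-size exceeds $2$); then $h_1\circ y$ has range-size one smaller, and $\alpha(h_1\circ y)=h_1(\alpha(y))=h_1(\top)=\top$. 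Iterating gives the required down-closure, and hence that $\bar\beta$ is order-preserving.

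It remains to carry out the second stage, which is short and is precisely where the choice of $h_1$ pays off. From $\alpha(x)=\top \iff x(a)=\top$, applied to $h_1^{\,j}\circ x \in \End\Cn$ and using $\alpha(h_1^{\,j}\circ x) = h_1^{\,j}(\alpha(x))$, I obtain for each $j$ the equivalence $\alpha(x) \geq n-1-j \iff x(a)\geq n-1-j$, since $h_1^{\,j}(v)=\top$ exactly when $v\geq n-1-j$. Letting $j$ range over $1,\dots,n-2$ yields $\alpha(x)\geq t \iff x(a)\geq t$ for every threshold $t$, and because $\Cn$ is a chain this forces $\alpha(x)=x(a)$. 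Hence $\alpha = e_{\Cn}(a)$, which completes the proof that $\{g_1,\dots,g_{n-3},h_1\}$ dualises $\Cn$.
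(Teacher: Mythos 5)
Your proposal is correct, and its first two thirds coincide with the paper's argument: your $\bar\beta$ is exactly the map $\Delta(\phi)$ that the paper constructs, your fibre-constancy step is the paper's well-definedness claim (both resting on Lemma~\ref{lem:endo-range} and the fact that the $g_i$ fix $\top$ and nothing else is sent to $\top$), and your order-preservation step plays the same role as the paper's, though the paper takes canonical representatives $x_k=\gamma_{\A}(u_k,\{0\}\cup\{n-k,\ldots,n-1\})$ linked directly by $h_1$, whereas you move inside a fibre to an endomorphism whose range contains $n-2$ and then apply $h_1$; both are sound. Where you genuinely diverge is the finish. The paper proves that $\Delta$ is \emph{injective} (separating $\phi\neq\psi$ by pushing a disagreement to $\top$ with powers of $h_1$) and then concludes by the standard piggyback counting argument: $|C_n|\leq|\E\D(\Cn)|\leq|\fnt{KHU}(\Cn)|=|C_n|$ forces $e_{\Cn}$ to be onto. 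You instead use surjectivity of $k_{\fnt{U}(\Cn)}$ to realise $\bar\beta$ as evaluation at some $a\in C_n$, and then bootstrap with powers of $h_1$ to conclude $\alpha=e_{\Cn}(a)$ outright; no cardinality comparison is needed, and the dualising element $a$ is exhibited explicitly. It is worth observing that your bootstrapping step is, in substance, the paper's injectivity claim applied to the pair $(\alpha, e_{\Cn}(a))$, so the two proofs use the same computational ingredients but organise them differently: the paper's finish is shorter, while yours is constructive and makes visible \emph{which} evaluation each morphism is. Two trivial points of care: your identity ``$h_1^{\,j}(v)=\top$ iff $v\geq n-1-j$'' fails at $v=0$ for $j=n-1$, but you only use $j\leq n-2$, where it is correct; and to cover the threshold $t=n-1$ you need the $j=0$ case, which is just your stage-one conclusion, so nothing is missing.
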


\begin{proof}  
Observe that the evaluation map  $e_{\Cn} \colon \Cn \to \E\D(\Cn)$ is 
injective, and  the evaluation map  
$k_{\fnt{U}(\Cn)} \colon  \fnt{U}(\Cn) \to \fnt{KHU}(\Cn)$ is an 
isomorphism, and so surjective.  
We want to show that $e_{\Cn}$ is surjective.  
Now we bring in the critical, but entirely elementary, observation 
that it will suffice to construct an injective map 
$\Delta \colon \fnt{ED}(\Cn) \to \fnt{KHU}(\Cn)$
(see \cite[proof of Piggyback Duality Theorem~7.2.1]{CD98} or 
\cite{DW}). 

Recall that  $ \omega \colon\fnt{U}(\Cn) \to \two$  denotes  the 
$\CCD$-morphism with ${\omega^{-1}(1) = \{n-1\}}$
and that
for each
$u \in \fnt{HU}(\Cn)$ 
we can find 
$x \in \D(\Cn)$ 
such that  
$u = \omega \circ x$.
 We may now attempt to define $\Delta $ as follows. 
 Given $\phi \in \E\D(\Cn)$ let 
\[
\bigl(\Delta(\phi)\bigr)(u)=\bigl(\Delta(\phi)\bigr)(\omega \circ x) =\omega (\phi(x)) .
\]
We now establish a series of claims.  
These combine with the observations above to prove the 
proposition.

\begin{claimlist}
\item[{\bf 1.\!\!}]  $\Delta$ is a  well-defined map.

We have already observed  that every element of $\fnt{HU}(\Cn)$ is 
of the form $\omega \circ x$ for some $x \in \D(\Cn)$. 
We must now check that, for $x$ and $y$ in $\D(\Cn) = \End\Cn$ 
and $\phi \in \fnt{ED}(\Cn)$,
\[
\omega \circ x = \omega \circ y 
\Longrightarrow \omega(\phi(x)) = \omega(\phi(y)).
\]
Suppose first that  
$y = g_i \circ x$ 
for some $g_i$.  Then 
$\phi(x) = \top $ if and only if 
$\phi(y)=
g_i(\phi(x)) = \top.$ 
We argue likewise when 
$x = g_i \circ y$.
Hence, by Lemma~\ref{lem:endo-range},
 $\phi(x) = \top $ if and only if
$\phi(y)= \top$.
Since  
$\omega(j) = 1$ if and only if $j =\top$,
our claim is proved.

\item[{\bf 2.\!\!}]  $\Delta(\phi)$ is order-preserving for each 
$\phi \in \E\D(\Cn)$.

For  $1\leq i\leq n-1$, 
let $u_i\colon \Cn\to\two$ be the map determined by 
$u_i^{-1}(1)={\uparrow}i$.   
It is trivial to check that  the set $\{u_1,\ldots,u_{n-1}\}$  coincides 
with $\fnt{HU}(\Cn)$ and $u_{1}>u_2> \cdots> u_{n-1}$.  

Let $1 \leq i  < j < n-1$, so  $u_j < u_i$. 
 Assume that $(\Delta(\phi))(u_j)= 1$.  
We wish to show that $(\Delta(\phi))(u_i)= 1$.
For each $k$ such that ${i\leq k\leq j}$, let
${x_k=\gamma_{\A} (u_k,(\,\{0\}\cup\{n-k\ldots,n-1\}\,))}$, 
where the map $\gamma_{\A} $ is as defined 
in Lemma~\ref{lem:chain-hom}. 
It follows that $\w\circ x_i=u_i$ and $\w\circ x_j=u_j$. 
Then ${1=(\Delta(\phi))(u_j)=\w(\phi(x_j))}$, that is, 
$\phi(x_j)=\top$. 
Clearly $x_{k}=h_1\circ x_{x+1}$  whenever
$i\leq k<j$. 
Since $\phi(x_j)=\top$,  
$h_1(\top)=\top$, and $\phi$ preserves $h_1$, 
it follows that $\phi(x_i)=\top$. 
Therefore $(\Delta(\phi))(u_i)=\w(\phi( x_i))=1$.

\item[{\bf 3.\!\!}] For each $\phi \in \E\D(\Cn)$ the map  
$\Delta(\phi)\colon \fnt{HU}(\Cn) \to \twoT$ is continuous.

This is immediate because $\Cn$ is finite.  

\item[{\bf 4.\!\!}]  $\Delta$ is injective.   

Suppose that $\phi, \psi \in \E\D(\Cn)$ with $\phi \ne \psi$.
Pick $x \in \D(\Cn)$ such that $\phi(x) \ne \psi(x) $ 
in $\C_n$.  
Without loss of  generality,  assume that $\phi(x) < \psi(x)$. 
Let $j=(n-1)-\psi(x)$.  
Then $h_1^{j}(\psi( x))=\top$ and $h_1^{j}(\phi( x))\neq \top$ 
(where $h_1^{j}$ denotes the $j$-fold composition of $h_1$ if 
$j>0$ and  the identity map if $j=0$). 
Since $\phi$ and $\psi$ preserve $h_1$, 
\[
(\Delta(\psi))(\w\circ h_1^{j}\circ x)=\w(\psi(h_1^{j}\circ x))=
\w(h_1^{j}(\psi( x)))=\w(h_1^{j}(\psi( x)))=1
\]
and
\[
(\Delta(\phi))(\w\circ h_1^{j}\circ x)=\w(\phi(h_1^{j}\circ x))=
\w(h_1^{j}(\phi( x)))=\w(h_1^{j}(\phi( x)))=0.
\]
Therefore  $\Delta(\phi)\neq\Delta(\psi)$.\qedhere
\end{claimlist}
\end{proof}

The following theorem 
 supplies  a family of alter egos each of which dualises $\Gn$.
In Section~\ref{Sec:Fullness}, we shall  see that,  even if the natural 
dualities presented in Theorem~\ref{thm:newduality} are closely 
connected, they have significantly different properties. 
We recall that the definition of $\Sigma_n$ was given in Section~\ref{sec:prelim}.

\begin{theorem} \label{thm:newduality}  
Let $\sigma \in \Sigma_n$.  
Then 
${\twiddle{\Cn^\sigma}} = (C_n; \sigma,\Tp)$
yields a duality on ${\Gn =\ISP(\Cn)}$.
\end{theorem}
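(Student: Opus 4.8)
The plan is to prove the theorem by combining Proposition~\ref{prop:piggy} with the Test Algebra Lemma (Lemma~\ref{lem:TAL}), exploiting the structure of $\Sigma_n$. Recall that $\Sigma_n = \{f_1,g_1\}\times\cdots\times\{f_{n-3},g_{n-3}\}\times\{h_1,\ldots,h_{n-2}\}$, so a typical $\sigma\in\Sigma_n$ consists of one partial endomorphism from each pair $\{f_i,g_i\}$ together with one endomorphism $h_j$. The strategy is to pass from the alter ego $\{g_1,\ldots,g_{n-3},h_1\}$, already shown to dualise $\Cn$ in Proposition~\ref{prop:piggy}, to an arbitrary $\sigma$, handling the swap $h_1\leftrightarrow h_j$ and the swaps $g_i\leftrightarrow f_i$ separately.

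First I would observe that the argument of Proposition~\ref{prop:piggy} never genuinely used the index $1$ in $h_1$: the key facts were that $h_1$ fixes $\top$ (used in Claims~1 and~2) and that iterates $h_1^k$ suffice to push any non-top value up to $\top$ (used in Claim~4). Any endomorphism $h_j$ with $1\le j\le n-2$ satisfies $h_j(\top)=\top$ and, since $\ran h_j = C_n\setminus\{j\}$ shifts values at and above $j$ upward by one, repeated application of $h_j$ eventually sends every element strictly below $\top$ up to $\top$. Hence the entire proof of Proposition~\ref{prop:piggy} goes through verbatim with $h_1$ replaced by any $h_j$, showing that $\{g_1,\ldots,g_{n-3},h_j\}$ yields a duality on $\Cn$. (A small care point in Claim~2: one must check that the relevant covering step $x_k = h_j\circ x_{k+1}$ still holds for the chosen $h_j$, but this follows from the description of $\gamma_\A$ and the action of $h_j$ on ranges.)

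Next I would handle replacing some $g_i$ by its inverse $f_i$. The crucial point from Section~\ref{sec:prelim} is that $f_i$ and $g_i$ are mutually inverse partial endomorphisms, and a continuous structure-preserving map $\phi$ preserves $g_i$ if and only if it preserves $f_i$: preserving a partial operation means respecting its graph, and the graph of $f_i$ is just the graph of $g_i$ read backwards. Consequently $\E\D(\Cn)$, and indeed the well-definedness and injectivity arguments of Claims~1 and~4, are completely insensitive to whether we list $g_i$ or $f_i$ in the alter ego—both impose exactly the same constraint $\phi(x)=\top\Leftrightarrow\phi(g_i\circ x)=\top$ that drove the well-definedness proof (this is precisely the reduction ``without loss of generality $\sigma_i=g_i$'' used in the proof of Lemma~\ref{lem:endo-range}). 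Therefore $\{\sigma_1,\ldots,\sigma_{n-3},h_j\}$ yields a duality on $\Cn$ for \emph{every} choice of $\sigma_i\in\{f_i,g_i\}$ and every $h_j$, which is exactly the statement that $\twiddle{\Cn^\sigma}$ dualises $\Cn$ for all $\sigma\in\Sigma_n$.

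Finally I would upgrade from a duality on the single algebra $\Cn$ to a duality on all of $\Gn$ via Lemma~\ref{lem:TAL}. Here is where the Test Algebra Lemma does its work: $\Gn$ is endodualisable, so the full endomorphism alter ego $(C_n;\End\Cn,\Tp)$ yields a duality on $\Gn$. Adjoining the partial endomorphisms $\sigma_1,\ldots,\sigma_{n-3}$ can only preserve a duality, so $\End\Cn\cup\{\sigma_1,\ldots,\sigma_{n-3}\}$ yields a duality on $\Gn$. Now I discard the endomorphisms one at a time: for each $e\in\End\Cn$ other than the target $h_j$, Lemma~\ref{lem:TAL} permits dropping $e$ provided the remaining set still dualises $\Cn$, and this is guaranteed by the single-algebra result established in the preceding steps (since $\{\sigma_1,\ldots,\sigma_{n-3},h_j\}$ is always contained in the surviving set). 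Iterating until only $h_j$ remains yields that $\{\sigma_1,\ldots,\sigma_{n-3},h_j\}=\sigma$ dualises $\Gn$. The main obstacle I anticipate is bookkeeping in this final step—ensuring the Test Algebra Lemma is applicable at each deletion (it requires $G\setminus\{e\}\cup H$ to dualise $\Cn$, which the single-algebra result supplies only once we have reduced $G$ down to contain $h_j$), so the deletions must be ordered so that $h_j$ is retained throughout; verifying that the hypotheses remain satisfied along the whole chain of deletions is the genuinely delicate part.
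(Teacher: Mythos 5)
Your overall architecture (single-algebra dualities for every $\sigma$, the $g_i\leftrightarrow f_i$ interchange via converse graphs, then a Test Algebra Lemma bootstrap from $\Cn$ up to $\Gn$) is reasonable, and your final step is essentially how the paper's first sentence works. But your first step contains a genuine error. You claim that ``repeated application of $h_j$ eventually sends every element strictly below $\top$ up to $\top$,'' and that consequently the proof of Proposition~\ref{prop:piggy} goes through verbatim with $h_1$ replaced by $h_j$. This is false for $j\geq 2$: by definition $h_j(x)=x+1$ only when $j\leq x<n-1$ and $h_j(x)=x$ otherwise, so every element strictly below $j$ is \emph{fixed} by $h_j$ and never reaches $\top$ under iteration (e.g.\ for $n=5$, $j=3$, the element $1$ satisfies $h_3^k(1)=1$ for all $k$). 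This kills Claim~4 as written: if $\phi(x)<\psi(x)<j$, no power of $h_j$ separates them under $\w$. It also breaks Claim~2, since the identity $x_k=h_j\circ x_{k+1}$ fails in general for $j\geq 2$ (values of $x_{k+1}$ lying below $j$ are not shifted). The paper itself flags exactly this point: it includes $h_1$ rather than any alternative endomorphism ``because this makes it particularly easy to establish Claim~4.''

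The missing ingredient is the paper's entailment step. The correct route from $h_j$ back to $h_1$ is the composition identity $h_1=f_1\circ\cdots\circ f_{j-1}\circ h_j$, combined with the fact that each $f_i$ is entailed by $g_i$ (their graphs are mutual converses): any morphism preserving $g_1,\ldots,g_{n-3}$ and $h_j$ therefore preserves $h_1$ as a derived composite, and only then do Claims~2 and~4 go through. In other words, the case $\sigma_{n-2}=h_j$ with $j\neq 1$ cannot be handled by re-running Proposition~\ref{prop:piggy} verbatim; it must be reduced to the $h_1$ case by entailment (see \cite[Section~2.4]{CD98}), which is precisely what the paper does: it first establishes, via Lemma~\ref{lem:TAL} and Proposition~\ref{prop:piggy}, that $\{g_1,\ldots,g_{n-3},h_1\}$ dualises $\Gn$, and then observes that every $\sigma\in\Sigma_n$ entails this set, since each $g_i$ is entailed by $\sigma_i$ and $h_1$ is entailed by $f_1,\ldots,f_{n-3}$ together with $h_j$. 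Your Step~2 (the $f_i\leftrightarrow g_i$ swap) is fine, and your Step~3 bookkeeping with the Test Algebra Lemma is sound; the gap is confined to, but fatal for, Step~1 as you justified it.
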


\begin{proof}
We first note that Lemma~\ref{lem:TAL} and 
Proposition~\ref{prop:piggy}
combine to tell us that the alter ego 
$(C_n;g_1,\ldots,g_{n-3},h_1,\Tp)$ yields a duality on ~$\Gn$.

For any~$i$, the maps $g_i$ and~$f_i$ are interchangeable 
 because their  graphs are mutual converses. 
We note that  
$h_1=f_1\circ \cdots\circ f_{i-1}\circ h_{i}$ for 
$2\leq i\leq n-2$. 
Hence (see \cite[Section~2.4]{CD98})
$h_1$ is entailed by $f_1,\ldots,f_{n-3}$ and $h_i$.  
Therefore
$\twiddle{\Cn^\sigma}$ yields a duality for any choice of 
$\sigma$ from $\Sigma_n$.
\end{proof}

We remark that we could use the Test Algebra Lemma to prove
that each of the dualities presented in Theorem~\ref{thm:newduality} is optimal;
cf.~\cite[Theorem~2.4]{DT05}.  The technique is standard and we do not include details here.

We briefly consider $\G_3$.  Here 
$\End\C_3=\{\id_{\C_3}, h_1\}$,  
and there 
are no non-extendable endomorphisms to consider.  
We could define
$\Sigma_3=\{h_1\}$ and so bring $n=3$ within the scope of 
Theorem~\ref{thm:newduality}.  But this adds nothing that is  new: 
  already in \cite{D76} the  alter ego $(C_3;h_1,\Tp)$ was shown to 
yield a duality on $\G_3$.     For $n=2$ there is even less that is 
worth saying, since 
$\End (\C_2) = \{\id_{C_2}\}$ 
and there are no non-extendable 
partial endomorphisms.  The duality for $\G_2$ associated  with 
$\Sigma_2$, defined to be $\emptyset$,
is  just Stone duality for Boolean algebras.

 \section{From 
natural duality to 
Priestley/Esakia duality and back again}\label{sec:Translation}

The main objective in this section
is to investigate how the dualities 
presented in Theorem~\ref{thm:newduality} facilitate translation 
from the categorically well-behaved natural duality set-up  to the 
more pictorial representation afforded by Priestley/Esakia duality for 
Heyting algebras.  
Before demonstrating how the translation operates we 
briefly recall the Priestley/Esakia duality.
This  has a long history, and has been rediscovered and 
reformulated many times.
By way of reference we  note here Esakia's paper \cite{Esa}
 and also the 
recent paper \cite{BBGK}.

The relative pseudocomplement in a Heyting algebra is uniquely 
determined by the underlying lattice order. 
More precisely,  we may assert that the forgetful functor 
$\fnt{U}\colon \cat H\to \CCD$ is faithful and 
$\fnt{U}\colon \cat H\to \fnt{U}(\cat H)$ is part of a categorical 
equivalence (actually an isomorphism); the inverse  
$\fnt{V}\colon \fnt{U}(\cat H) \to \cat H$ maps each bounded 
distributive lattice $\Lalg$  that admits a relative 
pseudocomplement to the unique Heyting algebra $\A$ such that 
$\fnt{U}(\A)=\Lalg$. 

An algebra $\Lalg \in \CCD$ can be identified with its second dual 
$\fnt{K}(\X)$, where  $\X = \fnt{H}(\Lalg)$.   
There exists a Heyting algebra $\B$ with 
$\fnt{U}(\B) = \fnt{K}(\X)$  if and only if the Priestley space 
$\X = (X; \leq,\Tp)$ has the property that  ${\downarrow}O$ is  
$\Tp$-open whenever $O$ is $\Tp$-open;  
if this condition is satisfied we call $\X$ an \defn{Esakia space}.
Given Esakia spaces~$\X$ and~$\Y$, a  continuous 
order-preserving 
map $\phi \colon \Y \to \X$ is such that 
$\fnt{K} (\phi) \colon \fnt{K}(\X) \to \fnt{K}(\Y)$ preserves the 
relative pseudocomplement if and only $\phi$ is an 
\defn{Esakia morphism}, meaning that   
$\phi({\uparrow}y) = {\uparrow}\phi(y)$ for all $y \in Y$.  
In summary, there is a dual equivalence between the category of 
Heyting algebras and the category of Esakia spaces 
(with Esakia morphisms), obtained by restricting
the duality between $\CCD$ and $\CP$ to the subcategory
 $\fnt{U}(\cat H)$ and a certain subcategory $\cat E$ of $\CP$.

As observed earlier,  a Heyting algebra is a G\"odel algebra if and 
only if the associated Esakia space is a forest. 
In our formulation of the duality  trees grow downwards.
Restricting  the functors~$\fnt{HU}$ and 
$\fnt{VK}{\restriction}_{\cat E}$  we obtain a dual equivalence 
between the category $\cat G$ of G\"odel algebras  and the 
category $\cat F$ of  Esakia spaces whose order structure is a forest 
and  Esakia morphisms.

The category~$\Gn$ is dually equivalent to the full 
subcategory~$\CFn$ of Esakia spaces whose objects are forests of 
depth at most $n-2$. 
Figure~\ref{Fig:GodelDual}  summarises the various dual equivalences relating to Priestley/Esa\-kia duality and their restrictions to full subcategories, shown by unlabelled vertical arrows.

 \begin{figure}  [ht]
\begin{center}
\begin{tikzpicture} 
[auto,
 text depth=0.25ex,
 move up/.style=   {transform canvas={yshift=1.9pt}},
 move down/.style= {transform canvas={yshift=-1.9pt}},
 move left/.style= {transform canvas={xshift=-2.5pt}},
 move right/.style={transform canvas={xshift=2.5pt}}] 
\matrix[row sep= 1.5cm, column sep= 1.4cm]
{ 
\node (Gn) {$\Gn$}; & \node (G) {$\cat G$}; &\node (H){$\cat H$};
 && \node (DH) {$\fnt{U}(\cat H)$}; &\node (D){$\CCD$};
\\ 
\node (Fn) {$\CFn$};& \node (F) {$\cat F$}; & \node (E) {$\cat E$};
& &&  \node (P) {$\CP$};
\\ 
};
\draw [-latex , move up] (H) to node  [yshift=-2pt]  {$\fnt U$}(DH);
\draw [latex-, move down] (H) to node [swap] {$\fnt V$}(DH);
\draw [-latex, move right] (H) to node   {$\fnt{HU}$}(E);
\draw [latex-, move left] (H) to node [swap] [xshift=2pt]{$\fnt {VK}\restrict{\cat E}$}(E);
\draw [-latex, move right] (G) to node {}  
(F);
\draw [-latex, move down] (DH) to node  [yshift=+5pt] {$\fnt H
{\restriction}_{\fnt{U}(\cat H)}$}(E);
\draw [latex-, move up] (DH) to node [swap] [xshift=2pt,yshift=-1pt]{$\fnt K{\restriction}_{\cat E}$}(E);
\draw [move down, right  hook-latex] (DH) to node {} (D);
\draw [right hook-latex, move down] (E) to node {} (P);
\draw [latex-, move left] (G) to node 
{} 
(F);
\draw [-latex, move right] (Gn) to node {} 
(Fn);
\draw [latex-, move left] (Gn) to node {} 
(Fn);
\draw [right hook-latex,move down] (G) to node {} (H);
\draw [right hook-latex,move down] (Gn) to node {} (G);
\draw [right hook-latex,move down] (F) to node {} (E);
\draw [right hook-latex,move down] (Fn) to node {} (F);
\draw [-latex, move right] (D) to node   {$\fnt H$}(P);
\draw [latex-, move left] (D) to node [swap] {$\fnt K$}(P);

\end{tikzpicture}
\end{center}
\caption{Priestley/Esakia duality for G\"odel algebras}\label{Fig:GodelDual}
\end{figure}

 For fixed 
$n \geq 4$ 
and each choice of  $\sigma \in \Sigma_n$,   we shall use
$\D^{\sigma }$ and $\E^{\sigma }$ to denote the functors determined by the alter ego $\twiddle{\Cn^{\sigma }}$ of $\Cn$.
Our immediate  aim is to relate the dual space $\D^{\sigma }(\A)$
 to the Priestley/Esakia dual  $\fnt{HU}(\A)$.  
Some word of explanation is needed before we 
demonstrate how to do this.
Let $\cat{Y}^{\sigma }$ denote the full subcategory of $\IScP(\twiddle{\Cn^{\sigma }})$ whose class of objects is  $\mathbb{I}(\fnt{D}^{\sigma }(\Gn))$. 
From Theorem~\ref{thm:newduality} and the 
Priestley/Esakia duality for $\Gn$, it is straightforward to see that 
$\CFn$ and $\cat Y^{\sigma }$
 are equivalent categories.     
Therefore one may ask: why  present a description of 
$\D^{\sigma }(\A)$ from $\fnt{HU}(\A)$,  and vice versa, if this can 
be obtained 
using $\A$ as a stepping stone?
The answer is that to prove the trivial fact 
that $\CFn$ and $\cat Y^{\sigma }$ are equivalent is not our final 
goal. 
 We want to reveal  the very special connection between these 
categories which will be our primary tool in the final sections  of the 
paper.

Assume we have any finitely generated 
(quasi)variety $\CA$  of distributive lattice-based algebras with 
forgetful functor $\fnt{U}\colon \CA \to \CCD$.
In \cite[Section~2]{CPcop} we presented a
 procedure for passing from the natural dual  of an algebra 
$\A \in \CA$ to the Priestley dual $\fnt{HU}(\A)$ when the natural 
duality under consideration was obtained by the piggyback method.  
Here we carry out an analogous process, 
but now based  on any of the 
 dualities we 
established in Theorem~\ref{thm:newduality}.
We shall do this by  proving a variant of \cite[Theorem~2.4]{CPcop}, 
as this theorem applies to the special case in which $\CA=\Gn$.  
This result---Theorem~\ref{thm:RevEngGodel}---achieves more  
than  the direct specialisation of the general result.  
The reason for this lies in the way in which, for $\A \in \Gn$, the
layers  of  the Priestley space $\fnt{HU}(\A)$  
are derived from the action of the maps $g_i$ (or $f_i$) on 
$\D(\A)$, and how the lifting of the chosen endomorphism~$h_j$ 
relates these layers.
(It is convenient to  visualise   
 $\fnt{HU}(\A)$   as being comprised of layers, each layer consisting of the elements at a particular depth; see Fig.~\ref{fig:C5} relating to Example~\ref{Ex:Dual}.)

Before we begin we recap on  the form taken by  
the objects of the dual  category 
$\CXn^{\sigma } = \IScP(\twiddle{\Cn^{\sigma }})$, where 
 $\sigma \in \Sigma_n$.
  These are topological structures
$\X = 
(X;\sigma _1^{\XX},\sigma _2^{\XX},\ldots,\sigma _{n-2}^{\XX},\Tp^{\XX})$,
 where  the partial operations $\sigma _i^{\XX}$ 
(for $1 \leq i\leq n-3$) and the operation  $\sigma _{n-2}^{\XX}$ 
are obtained by pointwise lifting of the corresponding operations 
$\sigma _i$ on~$\Cn$ and the domain of each partial operation is a 
closed substructure of~$\X$ 
(see \cite[Chapter~2]{CD98} for details). 
Let $\newapprox_{\XX}^\sigma $ be  the binary relation defined 
on~$X$ by $x \newapprox_{\XX}^\sigma  y$ if and only if either 
$x=y$ or there exists  a sequence 
$ z_0=x, \dots, z_N = y\in \X$ 
such that,   for each $j\in \{0,\ldots,N-1\}$, there exists
 $i_j\in\{1,\ldots,n-3\}$ such that 
$z_{j+1} = \sigma _{i_j} ^{\XX}\circ z_j$ or 
$z_j = \sigma _{i_j}^{\XX} \circ z_{j+1}$.
Then $\newapprox_{\XX}^\sigma $ is an equivalence relation 
on~$X$.  
The definition of $\newapprox_{\XX}^\sigma$ is motivated by 
Lemma~\ref{lem:endo-range}, which can be recast as follows.

\begin{lemma}\label{Lem:Newapprox}
Let  $\sigma\in\Sigma_n$.  Let $\A\in\Gn$ and  $\X=\D^{\sigma}(\A)$ and $x,y \in \X$.  
    Then 
\[
x^{-1}(\top) = y^{-1}(\top)\ \Longleftrightarrow \ 
\omega\circ x =\omega \circ y \
\Longleftrightarrow\
x\newapprox_{\XX}^\sigma y. 
\]
\end{lemma}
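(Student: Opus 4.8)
The plan is to obtain the entire chain of equivalences by reading Lemma~\ref{lem:endo-range} through the dictionary between the lifted partial operations $\sigma_i^{\XX}$ and ordinary composition with $\sigma_i$, after first separating off the trivial case $x=y$. The point is that, once the defining condition of $\newapprox_{\XX}^\sigma$ is recognised as nothing but condition (3) of Lemma~\ref{lem:endo-range}, the statement is essentially a restatement of that lemma.

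First I would dispose of the leftmost equivalence directly and for all $x,y\in\X$, since it is purely pointwise: because $\omega(k)=1$ if and only if $k=\top$, for each $a\in A$ the values $\omega(x(a))$ and $\omega(y(a))$ agree if and only if $x(a)$ and $y(a)$ are both equal to $\top$ or both different from $\top$; quantifying over $a\in A$ then yields $\omega\circ x=\omega\circ y \Longleftrightarrow x^{-1}(\top)=y^{-1}(\top)$. This is exactly the observation made at the start of the proof of Lemma~\ref{lem:endo-range}, and it needs no hypothesis on $x$ and $y$.

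Next I would record the translation between the two languages. Because $\X=\D^{\sigma}(\A)$ sits inside $(\twiddle{\Cn^{\sigma}})^A$ with all operations lifted pointwise, the partial operation $\sigma_i^{\XX}$ is defined at $z\in\X$ precisely when $\ran z\subseteq\dom\sigma_i$, and in that case $(\sigma_i^{\XX}(z))(a)=\sigma_i(z(a))$ for every $a\in A$; in other words $\sigma_i^{\XX}(z)=\sigma_i\circ z$ whenever either side is defined. Consequently the sequence $z_0=x,\dots,z_N=y$ appearing in the definition of $\newapprox_{\XX}^\sigma$ is precisely the sequence appearing in condition (3) of Lemma~\ref{lem:endo-range}, with the same indices $i_j\in\{1,\dots,n-3\}$ and the same maps $\sigma_{i_j}\in\{f_{i_j},g_{i_j}\}$.

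Finally I would split on whether $x=y$. If $x=y$, then all three assertions hold trivially, the relation $x\newapprox_{\XX}^\sigma y$ by the first clause of its definition. If $x\neq y$, then Lemma~\ref{lem:endo-range} supplies the equivalence of the equalities $x^{-1}(\top)=y^{-1}(\top)$ and $\omega\circ x=\omega\circ y$ with its condition (3), while the previous paragraph identifies that condition with $x\newapprox_{\XX}^\sigma y$; combining the two cases closes the chain. I anticipate no serious obstacle: the only step demanding care is the identification $\sigma_i^{\XX}(z)=\sigma_i\circ z$ together with the matching of domains, which is precisely where the pointwise nature of the lifting must be invoked.
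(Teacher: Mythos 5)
Your proposal is correct and is essentially the paper's own argument: the paper states this lemma without a separate proof, presenting it as a recasting of Lemma~\ref{lem:endo-range} via the definition of $\newapprox_{\XX}^\sigma$, which is exactly what you carry out (pointwise identification of $\sigma_i^{\XX}(z)$ with $\sigma_i\circ z$, plus the trivial case $x=y$ that Lemma~\ref{lem:endo-range} excludes). Your explicit treatment of the domain-matching for the lifted partial operations is the one detail the paper leaves tacit, and you handle it correctly.
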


The cluttered notation adopted below is temporarily necessary  because we  shall work simultaneously  with 
more than one alter ego.  
We denote the equivalence class of $x \in X$ 
under ${\newapprox}_{\XX}^\sigma$
by $[x]_{\newapprox_{\XX}^\sigma}$.
We now define a relation 
$\newcover_{\XX}^\sigma $ on $X/{\newapprox}_{\XX}^{\sigma }$ as follows:
\[
[x]_{\newapprox_{\XX}^\sigma}  \, \newcover_{\XX}^ {\sigma }  \,
[y ]_{\newapprox_{\XX}^\sigma}
\Longleftrightarrow  
x  \not{\!\!\newapprox}_{\XX}^\sigma \,  y \text{ and }  
\exists z \bigl(  x \newapprox_{\XX}^\sigma   z\text{ and } 
\sigma_{n-2}^{\XX}(z)
\newapprox_{\XX}^\sigma   y 
\bigr),
\]
and let $\leq_{\XX}^\sigma $ be the reflexive, transitive 
 closure  of 
$\newcover_{\XX}^\sigma $.  
Taking the reflexive, transitive 
closure of the antisymmetric relation 
$\newcover_{\XX}^\sigma$ does not destroy antisymmetry, 
so $\leq_{\XX}^\sigma $ is a partial order.

\begin{lemma}\label{Lem:InvRevEn}
  Let  $\sigma ,\tau \in\Sigma_n$. 
Let $\A$ be an algebra in $\Gn$ and let  $\X=\D^{\sigma }(\A)$ 
and $\X'=\D^{\tau }(\A)$ be the associated dual spaces. 
Then 
\begin{newlist}
\item[{\rm (i)}] $\newapprox_{\XX}^{\sigma }$ and 
$\newapprox_{\XX'}^{\tau }$ are equal;   
 \item[{\rm (ii)}] $\leq_{\XX}^\sigma$ and $ \leq_{\XX'}^{\tau }$ are 
equal.
\end{newlist}

Moreover, for any $\sigma \in \Sigma_n$, the relation 
$\leq_{\XX}^\sigma$ is a partial order on 
$X/{\newapprox}_{\XX}^\sigma$ of depth at most $n-2$,   
 for which ${\newcover}_{\XX}^\sigma$ is the associated covering 
relation. 
\end{lemma}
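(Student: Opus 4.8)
The plan is to route everything through the forest $F=\CCD(\fnt{U}(\A),\two)$, exploiting the fact that, although $\newapprox$, $\newcover$ and $\leq$ all carry a superscript, the data they encode turns out to be independent of the alter ego. Note first that, as sets, $\X$ and $\X'$ share the common underlying set $\D^{\sigma}(\A)=\Gn(\A,\Cn)=\D^{\tau}(\A)$, and on this set the map $q\colon x\mapsto\omega\circ x$ is defined with no reference to $\sigma$ or $\tau$. For part~(i) there is then nothing to do beyond invoking Lemma~\ref{Lem:Newapprox}, which identifies both $\newapprox_{\XX}^{\sigma}$ and $\newapprox_{\XX'}^{\tau}$ with the single relation $\{(x,y)\mid\omega\circ x=\omega\circ y\}$; hence the two coincide.

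The crux is one claim from which (ii) and the final assertion both follow: under the bijection $\bar q\colon X/{\newapprox_{\XX}^{\sigma}}\to F$ induced by $q$ (well defined and bijective by Lemmas~\ref{lem:chain-hom} and~\ref{Lem:Newapprox}), the relation $\newcover_{\XX}^{\sigma}$ corresponds exactly to the covering relation $\newcover$ of $F$, and this correspondence does not depend on $\sigma$. To prove it I would write $\sigma_{n-2}=h_j$ and compute the composite with $\omega$: since $\omega(h_j(k))=1$ precisely when $k\in\{n-2,n-1\}$ — a fact independent of $j$ — one obtains $(\omega\circ\sigma_{n-2}^{\XX}(z))^{-1}(1)=z^{-1}(\{n-2,n-1\})$, which by Lemma~\ref{lem:isoupchain} is the map $u_{n-2}$ attached to $z$ whenever $n-2\in\ran z$, i.e.\ the element of ${\uparrow}(\omega\circ z)$ immediately above $\omega\circ z$, and so the unique cover of $\omega\circ z$ in $F$. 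The existential quantifier in the definition of $\newcover_{\XX}^{\sigma}$ is exactly what lets us replace $x$ by some $z\newapprox_{\XX}^{\sigma}x$ with $n-2\in\ran z$: such a $z$ exists precisely when $\omega\circ x$ is non-maximal in $F$ (equivalently $|{\uparrow}(\omega\circ x)|\geq 2$), and is produced by applying Lemma~\ref{lem:chain-hom} to a pair $(\omega\circ x,V)$ with $n-2\in V$ and $|V|=|{\uparrow}(\omega\circ x)|+1$. When $\omega\circ x$ is maximal no such $z$ exists, consistent with the requirement $x\not\newapprox_{\XX}^{\sigma}y$. Tracking both directions yields $[x]_{\newapprox_{\XX}^\sigma}\newcover_{\XX}^{\sigma}[y]_{\newapprox_{\XX}^\sigma}$ if and only if $\omega\circ x\newcover\omega\circ y$ in $F$.

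Granting the claim, (ii) is immediate: the relation corresponding to $\newcover$ on $F$ under $\bar q$ is the same whether we build it from $\sigma$ or from $\tau$, so $\newcover_{\XX}^{\sigma}=\newcover_{\XX'}^{\tau}$, and passing to reflexive transitive closures gives $\leq_{\XX}^{\sigma}=\leq_{\XX'}^{\tau}$. For the final statement, recall that $\A\in\Gn$ forces $F$ to be a forest of depth at most $n-2$, so by the remark on finite-depth posets the order on $F$ is the reflexive transitive closure of its covering relation. Since $\leq_{\XX}^{\sigma}$ is by definition the reflexive transitive closure of $\newcover_{\XX}^{\sigma}$, the bijection $\bar q$ carries $\leq_{\XX}^{\sigma}$ onto the order of $F$ and is therefore a poset isomorphism; transporting structure along $\bar q$ shows that $\leq_{\XX}^{\sigma}$ is a partial order of depth at most $n-2$ whose associated covering relation is $\newcover_{\XX}^{\sigma}$.

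The main obstacle is the computation at the heart of the claim — in particular verifying that $\omega\circ\sigma_{n-2}^{\XX}(z)$ lands on the immediate cover of $\omega\circ z$ rather than higher up, and confirming that the existential choice of $z$ can be made exactly when $\omega\circ x$ is non-maximal. The reward for carrying this out carefully is that every step is manifestly independent of the chosen $\sigma$ and $\tau$, which is precisely what forces the coincidences in (i) and (ii).
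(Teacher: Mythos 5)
Your proof is correct, and it is organised genuinely differently from the paper's. The paper proves (ii) by a head-to-head comparison of the two covering relations: given a $\sigma$-cover with witness $z$, it computes that $(\w\circ\sigma_{n-2}^{\XX}(z))(a)=1$ iff $z(a)\in\{n-2,n-1\}$ iff $(\w\circ\tau_{n-2}^{\XX'}(z))(a)=1$ --- the same $j$-independence you exploit --- and concludes via Lemma~\ref{Lem:Newapprox} that the very same $z$ witnesses a $\tau$-cover; the final assertion is then dispatched in one line from Lemma~\ref{lem:isoupchain} and the definition of the order. You instead prove the stronger statement that the bijection $\bar q\colon X/{\newapprox}\to\CCD(\fnt U(\A),\two)$ induced by $x\mapsto\w\circ x$ matches $\newcover_{\XX}^{\sigma}$ with the covering relation of the forest, so that everything becomes manifestly intrinsic to the Priestley dual and independent of $\sigma$. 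This costs you the extra work of the backward direction (manufacturing a witness $z$ with $n-2\in\ran z$ via Lemma~\ref{lem:chain-hom}, and using uniqueness of covers in a forest), but it buys the final assertion about depth and the covering relation as an immediate transport of structure along a poset isomorphism, and it effectively pre-proves the order-theoretic half of Theorem~\ref{thm:RevEngGodel}, which the paper only establishes afterwards; your cut of the material and the paper's contain the same mathematics distributed differently. One spot you should make explicit when writing this up: in the forward direction of your key claim, a witness $z$ with $n-2\notin\ran z$ satisfies $\sigma_{n-2}^{\XX}(z)\newapprox z\newapprox x$, which contradicts the requirement $x\not{\!\!\newapprox}\,y$; this dichotomy on $\ran z$ is what ``tracking both directions'' really rests on, and it follows from the computation you already performed, but at present it is only alluded to.
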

\begin{proof}

(i) follows directly from Lemma~\ref{Lem:Newapprox}.
Since  we now know  that the equivalence relations on 
$X=\Gn(\A,\Cn)$ obtained from $\sigma$ and $\tau$ are the same 
we shall write simply~$\newapprox$ for the relation and 
$[x]_{\newapprox}$ for the equivalence class of an element~$x$ 
in~$X$.

We now prove (ii).
Let $x, y \in X $  be such that 
$[x]_\newapprox\, {\newcover}_{\XX}^\sigma \, [y]_{\newapprox} $.
Then $x\not\newapprox y$ and there exists 
$z\in[x]_{\newapprox}$ for which 
$\sigma _{n-2}^{\XX}(z)\in[y]_{\newapprox}$. 
Since $h_j(n-2) = n-1$ for any   $j\in\{1,\ldots,n-2\}$, it follows 
that, for $a \in \A$, 
\begin{align*}
(\w\circ \sigma _{n-2}^{\XX}(z))(a)=1& 
\Longleftrightarrow z(a)\in\{n-2,n-1\} 
\Longleftrightarrow(\w\circ \tau _{n-2}^{\XX}(z))(a)=1.
\end{align*}
By Lemma~\ref{Lem:Newapprox},
 ${\tau }_{n-2}^{\XX'}(z)\newapprox \sigma _{n-2}^{\XX}(z)
\in [y]_{\newapprox}$.
So  $x\, {\newcover}_{\XX'}^{\tau }\,y$. 
We deduce that $\newcover_{\XX}^{\sigma }$ and 
$\newcover_{\XX '}^{\tau }$ are equal. 
Therefore $\leq_{\XX}^\sigma$ coincides with~$\leq_{\XX'}^{\tau}$.

The final assertions follow  from Lemma~\ref{lem:isoupchain}
and the way in which the order on $X/{\newapprox}$ is defined.
\end{proof}

\begin{theorem} \label{thm:RevEngGodel} 
Let $\sigma \in\ \Sigma_n$.
Let  $\A\in\Gn$
and $\X=\D^{\sigma }(\A)$ be
its dual space.  
Let $Y= X/{\newapprox_{\XX}^{\sigma }}$ and 
let $\Tp$ be the 
quotient topology  derived from the topology  of\, 
$\D^{\sigma }(\A)$. 
Then $\Y = (Y;\leq_{\XX}^{\sigma },\Tp)$ is a Priestley space 
isomorphic to $\fnt{HU}(\A)$.
\end{theorem}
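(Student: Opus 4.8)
The plan is to exhibit the map $\overline{\Phi}\colon Y\to \fnt{HU}(\A)$ induced by $\Phi_{\w}\colon x\mapsto \w\circ x$ and to show that it is simultaneously an order-isomorphism and a homeomorphism; since $\fnt{HU}(\A)$ is a Priestley space, this settles both assertions of the theorem at once. First I would record that, by Lemma~\ref{Lem:Newapprox}, the fibres of $\Phi_{\w}$ are precisely the $\newapprox_{\XX}^{\sigma}$-classes, so $\Phi_{\w}$ factors through the quotient map $q\colon X\to Y$ as $\Phi_{\w}=\overline{\Phi}\circ q$, where $\overline{\Phi}([x]_{\newapprox})=\w\circ x$ is well defined and injective. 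Lemma~\ref{lem:chain-hom} provides surjectivity of $\Phi_{\w}$ onto $\fnt{HU}(\A)$, so $\overline{\Phi}$ is a bijection.

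For the topological part I would first check that $\Phi_{\w}$ is continuous: the topology of $\fnt{HU}(\A)$ is generated by the subbasic clopens $\{u\mid u(a)=1\}$ and their complements $(a\in A)$, and $\Phi_{\w}^{-1}\{u\mid u(a)=1\}=\{x\in X\mid x(a)=\top\}$ is clopen in $X$, since evaluation at $a$ is continuous and $\{\top\}$ is clopen in the discrete space $\Cn$. As $X$ is compact and $q$ is a continuous surjection, $Y$ is compact, and by the universal property of the quotient $\overline{\Phi}$ is continuous. A continuous bijection from the compact space $Y$ to the Hausdorff space $\fnt{HU}(\A)$ is a homeomorphism, so $\overline{\Phi}$ identifies the quotient topology $\Tp$ with the Priestley topology.

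The substantive step is to verify that $\overline{\Phi}$ carries $\leq_{\XX}^{\sigma}$ to the forest order on $\fnt{HU}(\A)$. Since $\leq_{\XX}^{\sigma}$ is the reflexive-transitive closure of $\newcover_{\XX}^{\sigma}$ (Lemma~\ref{Lem:InvRevEn}), and $\fnt{HU}(\A)$ is a forest of depth at most $n-2$, whose order is therefore the reflexive-transitive closure of its covering relation $\newcover$, it suffices to show $[x]_{\newapprox}\,\newcover_{\XX}^{\sigma}\,[y]_{\newapprox}$ if and only if $\w\circ x\,\newcover\,\w\circ y$. Writing $\sigma_{n-2}=h_j$, the crucial computation (already carried out in the proof of Lemma~\ref{Lem:InvRevEn}) is that $(\w\circ h_j\circ z)(a)=1$ if and only if $z(a)\geq n-2$; hence, in the notation of Lemma~\ref{lem:isoupchain}, $\w\circ(h_j\circ z)$ equals $u_{n-2}$ when $n-2\in\ran z$, and equals $\w\circ z$ otherwise. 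In the first case $u_{n-2}$ is the unique cover of $\w\circ z$ in the forest, as $n-2$ is the largest element of $\ran z\setminus\{0\}$ below $n-1$. I would then run both directions. If $[x]_{\newapprox}\,\newcover_{\XX}^{\sigma}\,[y]_{\newapprox}$ is witnessed by some $z\newapprox x$, then $x\not\newapprox y$ excludes the second case, forcing $n-2\in\ran z$ and $\w\circ y$ to be the cover of $\w\circ x$. Conversely, if $\w\circ y$ covers $\w\circ x$, then $\w\circ x$ is non-maximal, so $|\ran x|\geq 3$ by Lemma~\ref{lem:isoupchain}, and I would take $z=\gamma_{\A}(\w\circ x,V)$ for a set $V$ with $0,n-2,n-1\in V$ and $|V|=|\ran x|$ to witness $[x]_{\newapprox}\,\newcover_{\XX}^{\sigma}\,[y]_{\newapprox}$. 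With the two covering relations matched under $\overline{\Phi}$ and both orders equal to the closures of their covers, $\overline{\Phi}$ is an order-isomorphism.

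Combining the three steps, $\overline{\Phi}$ is an isomorphism of ordered topological spaces onto the Priestley space $\fnt{HU}(\A)$; in particular $\Y=(Y;\leq_{\XX}^{\sigma},\Tp)$ is a Priestley space and $\Y\cong\fnt{HU}(\A)$. I expect the order part to be the main obstacle, specifically pinning down that applying the lifted endomorphism $h_j$ to a suitably chosen representative realises exactly one covering step in the forest $\fnt{HU}(\A)$; the topological claim is then a formality via compactness.
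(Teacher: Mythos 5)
Your proof is correct and follows essentially the same route as the paper's: factor $\Phi_{\w}$ through the quotient using Lemma~\ref{Lem:Newapprox} and the surjectivity from Lemma~\ref{lem:chain-hom}, identify the topologies, and then compare orders through covering relations, with a representative supplied by $\gamma_{\A}$ on which the lifted endomorphism realises exactly one covering step. The differences are only local: you prove the topological identification directly (continuity plus the compact-to-Hausdorff argument) where the paper invokes the argument of \cite[Theorem~2.3]{CPcop}; you establish an exact two-way correspondence of covering relations, exploiting uniqueness of upper covers in a forest, where the paper proves two one-sided inclusions ($\newcover$ in the quotient implies $\leq$ in $\fnt{HU}(\A)$, and conversely for covers in $\fnt{HU}(\A)$) and then takes reflexive-transitive closures; and you treat an arbitrary $\sigma_{n-2}=h_j$ directly where the paper first reduces to $h_1$ via Lemma~\ref{Lem:InvRevEn}.
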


\begin{proof}  
By Lemma~\ref{Lem:InvRevEn}  we may assume that 
$\sigma =(g_1,\ldots,g_{n-3},h_1)$.   
We shall write 
 $\newapprox$ in place of $\newapprox_{\XX}^{\sigma}$
and omit subscripts from equivalence classes and from the order and covering relations on $X/{\newapprox}$. 

We know that
the map $\Phi_\w \colon x \mapsto \omega \circ x$ from $\D(\A)$ to 
$\fnt{HU}(\A)=(Z;\leq_{\ZZ},\Tp^{\ZZ})$ is surjective. 
Arguing just as in the proof of
\cite[Theorem~2.3]{CPcop} we proved that 
$(Z;\Tp^{\ZZ})$ is homeomorphic to the quotient space 
$(X/\ker(\Phi_\w); \Tp^{\XX}/\ker(\Phi_\w))$.
From the definition of~$\Phi_\w$, we have  $\Phi_\w(x)=\Phi_\w(x)$ 
if and only if $\omega\circ x=\omega \circ y$. 
By Lemma~\ref{Lem:Newapprox},  $\ker(\Phi_\w)$ coincides with  the 
relation $\newapprox$ described  in terms of the liftings of
$g_1,\ldots,g_{n-3}$. 
So we have identified $(\D(\A)/{\newapprox}\, ; \Tp)$ 
with $(Z;\Tp^{\ZZ})$.

It remains to reconcile the order  of the quotient space with that of 
${\fnt{HU}(\A)}$.  
Since we are working with posets of finite depth it suffices to 
consider the covering relations.
First suppose that $[x]\, \newcover\,  [y]$.  
So there exists $z$  such that
$x \newapprox z$, $y \newapprox h_1^{\XX}(z)$.  
Since $i\leq h_1(i)$ for each $i\in\C_n$, we have, for  $a\in\A$, 
\[
\omega(x(a))=\omega( z(a))\leq \omega(h_1( z(a)))=(\omega\circ h_1^{\XX}(z))(a))=
\omega(y(a))
\]
and hence $\omega\circ x\leq \omega\circ y$ in $\fnt{HU}(\A)$.

Conversely, assume that 
$\omega \circ y$ covers $\omega \circ x$ in $\fnt{HU}(\A)$.
Assume   $d(\omega\circ y)=j$ and $d(\omega\circ x)=j+1$.
By Lemma~\ref{lem:chain-hom},  there exists  $z\in\Gn(\A,\Cn)$ 
such that $\omega \circ z=\omega \circ x$ and 
$\ran(z)=\{0\}\cup {\uparrow}(n-1-j)$. 
By Lemma~\ref{Lem:Newapprox}, $z\newapprox x$. 
Now observe that 
$\ran(h_1^{\XX}\circ z)=\{0\} \cup {\uparrow}(n-j)$
and so 
$\omega \circ x=\omega \circ z<\omega \circ (h_1^{\XX}(z))$. 
Since ${\uparrow}(\omega \circ x)$ is a chain and 
$d(\omega \circ (h_1^{\XX}(z)))=j 
=d( \omega \circ
y)$, 
we have $\omega \circ (h_1^{\XX}(z))= \omega \circ x$.
 Therefore $h_1^{\XX}(z)\newapprox y$.  Hence 
$[x]\leq [y]$.
\end{proof}

A retrospective look at  \cite{D76} is  due here. 
There are clear 
similarities between our proof of 
Theorem~\ref{thm:RevEngGodel} and Davey's  original proof of
endodualisability of $\Gn$ 
\cite[Theorem~2.4]{D76}.
 Lemma~\ref{Lem:Newapprox} establishes 
that, for each 
$\A\in\Gn$, our 
relation~$\newapprox_{\D(\A)}^{\sigma}$ coincides 
with the relation $R$ defined in the proof of \cite[Theorem~2.4]{D76}.
But there is an important point to note.
The relations 
$\newapprox$ and $\newcover$ are defined using the lifting of the 
(partial and non-partial) operations $\sigma_i$. Therefore, they are 
available  in  
every space
 in
$\CX=\IScP(\twiddle{\C_n^{\sigma}})$, and not only  
those of the form $\D(\A)$ for some $\A\in\Gn$. 
This difference becomes crucial in the following sections 
when we 
determine which dualities are full and which $V$-formations admit 
amalgamation.

We now take a break from theory to discuss  how  translation works in practice.

\begin{ex}\label{Ex:Dual}
Fix $n=5$ and $\sigma=(g_1,g_2,h_3)$.
We illustrate the passage from the natural dual to the 
Priestley/Esakia  dual for the algebra $\str{C}_5$. 
Here the elements of $\X = \D^{\sigma}(\str{C}_5)$ are exactly the 
endomorphisms of $\str{C}_5$, on which $g_1$, $g_2$ and 
$h_3$ act by composition.   
We label each endomorphism~$e$ of $\str{C}_5$ by writing  
$\ran e \setminus \{0,4\}$ (which uniquely determines~$e$) 
as a string, as indicated in Fig.~\ref{fig:C5}. 
 For endomorphisms $e$ and $f$ we have 
$(e,f) \in \graph g_i^\XX$ if and only if 
\[
 \forall  a\in \str{C}_5\ \bigl(  (e(a),f(a)) \in 
\graph g_i  \bigl)
\]
for $i =1,2$.  In order for this to hold it is necessary that 
$i \notin \dom e$ and $i+1 \notin \ran f$.  
In the figure,  the   solid arrows  
arrows indicate the action of~$h_{3}$.
Dashed   and dotted arrows indicate, respectively, 
the action of  $g_1$ and of~$g_2$.

\begin{figure} [ht]
\hspace*{-1cm}   
\begin{tikzpicture}[scale=.8,
infobox/.style={
shape=ellipse,
minimum size=5mm,
 thin,
draw=black
}
]

\draw [-stealth,decorate,decoration={snake,amplitude=1pt,segment length=5pt,
                pre length=3pt,post length=3pt}]
(2.2,3)-- node [yshift=.5cm]  {$x \mapsto [x]_{\newapprox}$} (5.8,3);



\node (lev0) at (8,6) [infobox] {$\scriptstyle \emptyset$};
\node (lev1) at (8,4) [infobox] {
$\scriptstyle  1 \ 2 \ 3 $}
;
\node (lev2) at (8,2) [infobox] { 
$\scriptstyle 12 \ 
13\ 
23$} 
;
\node (lev3) at (8,0) [infobox] {$\scriptstyle 123$};
\draw [-] (lev0)--(lev1)--(lev2)--(lev3);
;

\node (HU5) at (8,-1.5) {$\fnt{HU}(\str{C}_5)$};
\node (D5) at (-1,-1.5)  {$\fnt{D}^\sigma (\str{C}_5)=\cat{G}_5(\str{C}_5,\str{C}_5)$}; 


\node (0) at (-1,6)  {$ \emptyset$};

 \node [xshift=-.1cm,yshift=.1cm] at (-1,6) {} edge [in=160,out=80,kloop] (v0z0k); 
 \node [xshift=.2cm] at (0-1,6) {} edge [in=60,out=120, ggloop] (v0z0gg); 
  \node [xshift=-.1cm,yshift=-.1cm] at (-1,6) {} edge [in=-120,out=-60, gloop, thin] (v0z0g);

\node (123) at  (-1,0) {$123$}  ;
\node (12) at (-3,2) {$12$};
 \node [xshift=-.1cm,yshift=.1cm] at (-3,2) {} edge [in=160,out=80, kloop] (12k); 
\node (13) at (-1,2) {$13$};

\node (23) at (1,2) {$23$};

\node (1) at (-3,4) {$1$};
 \node [xshift=-.1cm,yshift=.1cm] at (-3,4) {} edge [in=160,out=80, kloop] (1k); 
 \node [xshift=.2cm] at (-3,4) {} edge [in=60,out=120, ggloop] (1gg); 
\node (2) at (-1,4) {$2$};
 \node [xshift=-.1cm,yshift=.1cm] at (-1,4) {} edge [in=160,out=80, kloop] (2k); 

\node (3) at (1,4) {$3$};
   \node [xshift=-.1cm,yshift=-.1cm] at (1,4) {} edge [in=-120,out=-60, gloop] (3g); 

\karr{(3)--(0)}
\karr{(13)--(1)}
\karr{(23)--(2)}
\karr{(123)--(12)}

\ggarr{(3)--(2)}
\ggarr{(13)--(12)}
\garr{(2)--(1)}
\garr{(23)--(13)}

\end{tikzpicture}
\caption{Translation applied to the algebra $\str{C}_5$ in $\cat G_5 $}\label{fig:C5}
\end{figure}

Of course we have  a special situation here because $\str{C}_5$ is a chain.  
In general each layer of the dual space will not be a single  $\newapprox$-equivalence
class.  

We elected here to use the endomorphism $h_3$ in the alter ego, 
rather than the alternatives $h_1$ and $h_2$ supplied by 
Theorem~\ref{thm:newduality}, because the action of~$h_3$ on 
$\End\str{C}_5$ is especially simple.  
However one feature of the translation is present whichever of 
$h_1$, $h_2$ and $h_3$ we include  the alter ego:  ${\newapprox}$ 
is determined solely by $g_1$ and $g_2$,
whereas the ordering amongst ${\newapprox}$-equivalence classes 
is determined solely by $h_i$, whichever value of~$i$ we choose. 
  
We should draw attention, however, to Theorem~\ref{Theo:Fullness} below
in which we show that in any application in which we need a full 
(or equivalently a strong) duality for $\Gn$, then we must use $h_1$ rather 
 than any other~$h_{i}$.
\end{ex}

We  now seek to demonstrate that the process for passing from 
$\D(\str{C}_n)$ to $\fnt{HU}(\str{C}_n)$ (for $n \geq 4$) 
is much less transparent using
a duality based solely on endomorphisms 
(as in \cite{D76,DT05})
than when we use any of the variants
supplied by Theorem~\ref{thm:newduality}.

\begin{ex}  
We shall consider the alter ego  
$(C_5; h_1,h_2,h_3,\Tp)$ for $\G_5$.
As shown by   Davey and Talukder \cite[Theorem~2.4]{DT05}, this yields an optimal
duality.
In Fig.~\ref{fig:DTalterego}, the action  of $h_1$, $h_2$ and~$h_3$ on $\End \Cn$ is shown by  dashed,  
dotted and solid arrows, respectively. 
 It can be seen that these maps do encode
${\newapprox}$ and that, on the associated quotient, we can recover
the ordering of $\fnt{HU}(\str{C}_5)$.  
What does emerge clearly from this example is that 
translation from an endomorphism-based  duality to Priestley/Esakia duality
can be quite complicated, even on very simple objects.
Moreover fully reconciling our approach with that in \cite{D76} is not 
a trivial exercise in practice, though the theory ensures that it is, of course,
possible.

 \begin{figure} [ht]
\hspace*{-1.8cm}   
\begin{tikzpicture}[scale=1,
text depth=0.25ex,
 move up/.style=   {transform canvas={yshift=1pt}},
 move down/.style= {transform canvas={yshift=-1.2pt}},
 move left/.style= {transform canvas={xshift=-2pt}},
 move right/.style={transform canvas={xshift=2.5pt}}] 


\node (0) at (-1,6)  {$\emptyset$};

 \node [xshift=-.2cm,yshift=.1cm] at (-1,6) {} edge [in=160,out=80, kloop] (v0z0k); 
 \node [xshift=.4cm] at (0-1,6) {} edge [in=60,out=120, ggloop] (v0z0gg); 
  \node [xshift=-.2cm,yshift=-.1cm] at (-1,6) {} edge [in=-120,out=-60, gloop] (v0z0g);

\node (123) at  (-1,0) {$123$}  ;
\node (12) at (-3,2) {$12$};
 \node [xshift=-.1cm,yshift=.1cm] at (-3,2) {} edge [in=160,out=80, kloop] (12k); 
\node (13) at (-1,2) {$13$};

\node (23) at (1,2) {$23$};

\node (1) at (-3,4) {$1$};
 \node [xshift=-.1cm,yshift=.1cm] at (-3,4) {} edge [in=160,out=80, kloop] (1k); 
 \node [xshift=-.1cm,yshift=-.1cm] at (-3,4) {} edge [in=-120,out=-60, gloop] (1g); 
\node (2) at (-1,4) {$2$};
 \node [xshift=-.1cm,yshift=.1cm] at (-1,4) {} edge [in=160,out=80, kloop] (2k); 

\node (3) at (1,4) {$3$};
\node (3d) at (1,3.9) {};
\node (2d) at (-1,3.9) {};
\node (3u) at (1,3.95) {};
\node (2u) at (-1,3.95) {};
\harr{(123)--(12)}
\harr{(13)--(1)}
\harr{(23)--(2)}
\harr{(3)--(0)}
\draw [-stealth, thick,dotted,shorten >=.3pt,looseness=.8]  (3) to [bend right] (0);
\draw [-stealth,thick,  dashed,shorten >=.3pt,looseness=.8]  (3) to [bend left] (0);
\draw [move down,-stealth, thick,  dashed, shorten >=.3pt,
looseness=.3]  (23) to [bend left] (3);
\draw [move down,move left,-stealth,thick, dotted, shorten >=.3pt,looseness=.4]  (23) to [bend right] (3);
\draw [move down,-stealth, thick, dotted, shorten >=.3pt,looseness=.4]  (13) to [bend right] (1);
\draw [move down, -stealth, thick, dashed, 
looseness=.4]  (2) to [bend right] (3);


\draw [-stealth,thick,  dashed, shorten >=.3pt,looseness=.8]  (12) to [bend left] (23);

\earr{(123)--(13)}
\earr{(12)--(13)}

\eearr{(123)--(23)}
\eearr{(13)--(2)}
\eearr{(1)--(2)}
\earr{(2)--(3)};  


\end{tikzpicture}
\caption{The dual of $\str{C}_5$ for the duality for $\G_5$  with alter ego $(C_5; h_1,h_2,h_3, \Tp)$ \label{fig:DTalterego}}
\end{figure}
\end{ex}

Theorem~\ref{thm:going back}
presents the other half of the two-way 
translation process between $\CFn$ and $\cat Y^{\sigma }$, for a 
given $\sigma \in \Sigma_n$, as it applies to 
an object 
$\fnt{HU}(\A)$, 
where $\A \in \Gn$.
Lemma~\ref{lem:chain-hom} sets up, for the  given algebra $\A$,
mutually inverse bijections  $\iota_{\A}$ and $\gamma_{\A}$ between 
$\Gn(\A,\Cn)$ and a specified subset of  $\fnt{HU}(\A) \times \powersetn$.  
Starting  from the Esakia space
$\fnt{HU}(\A)$, we form suitable pairs with elements of $\powersetn$ and  
$\fnt{HU}(\A)$.
We then form  a topological structure $\X$ by equipping  our  set of 
pairs  with a topology and  operations 
$\sigma_{i}^{\XX}$ and 
establish that $\X$ is isomorphic to the natural dual space 
$\D^\sigma(\A)$. 
 We  carry out this construction   
using only the order and topological structure of  $\fnt{HU}(\A)$
(see the remarks following the theorem for the significance of this).
The Priestley/Esakia duality applied to $\A \in \Gn$ tells us that 
the evaluation map 
$k_{\A}  
 \colon\A \to  \fnt{KHU}(\A) $ is an 
isomorphism and the sets of the form $k_{\A}(a)$, as~$a$ ranges 
over~$\A$, are precisely the clopen up-sets in $\fnt{HU}(\A)$. 
 Moreover,  we recall that $\D(\A)$ is topologised with the subspace 
topology induced by the product topology  on $(C_n)^A$,  where 
the topology on $C_n$ is discrete. 
  These observations underlie the way topology 
is handled in the theorem.

\begin{theorem}\label{thm:going back}
Let $\A$ be an algebra in $\Gn$ and let $\fnt{HU}(\A) = (Y; \leq,\Tp)$.  
Let
\[
X= \{\, (y,U) \in  Y\times\powersetn 
\mid  |U| = d(y)+1 \mbox{ and\,  }0,n-1\in U\,\}.
\]
Define  partial maps $g_i^{\XX} $ and $f_i^{\XX}$ on $X$ as follows, 
where the  domains are given by the indicated  restrictions:
\begin{align*}
g_i^{\XX} (y,U) & =
          (y,g_i(U)) \qquad \ \ \text{\rm if } i\notin U, \\
f_i^{\XX} (y,U) & = 
              (y,f_i(U)) \qquad \ \ \text{\rm if } i+1\notin U, \\
\intertext{and total maps $h_j^{\XX}$ given by}
h_j^{\XX} (y,U)  & = \begin{cases}
                    (y,h_j(U))  & \text{\rm if } n-2\notin U, \\
                  (s(y),h_j(U)) & \text{\rm otherwise};
              \end{cases} 
\end{align*}
here $s$ denotes the function which, 
on 
$\fnt{HU}(\A)$---a  
forest of 
finite depth---maps each non-maximal point to its unique upper 
cover and fixes each maximal point.

For each  clopen up-set $W$ of $\fnt{HU}(\A)$ and  each $i\in\Cn$,  
let 
\[
A_{W,i}=
\{\,(y,U)\in X\mid i\in U\mbox{ and }|{\downarrow }i\cap U|
=|{\uparrow} x\cap W|+1 \,\}.
\]
 Let $\sigma \in\Sigma_n$. 
Then $\D^{\sigma }(\A)\cong \X=(X; \sigma _{1}^{\XX},\sigma _1^{\XX}, \ldots,\sigma _{n-2}^{\XX},\Tp^{\XX})$, 
 where 
$\Tp^{\XX}$ is the topology generated by the family of sets of the form $A_{W,i}$.
\end{theorem}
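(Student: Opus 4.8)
The plan is to produce the asserted isomorphism concretely as the bijection $\iota_{\A}$ of Lemma~\ref{lem:chain-hom}, and then to check separately that it preserves the operations and that it is a homeomorphism. First I would identify the underlying sets. Under the standard identification $Y=\fnt{HU}(\A)=\CCD(\fnt U(\A),\two)$, and recalling that in a forest $|{\uparrow}y|$ is determined by $d(y)$, the two conditions defining $X$ are precisely those defining the set $T$ of Lemma~\ref{lem:chain-hom}. That lemma then supplies, with no further work, mutually inverse bijections $\iota_{\A}\colon\Gn(\A,\Cn)\to X$, $x\mapsto(\w\circ x,\ran x)$, and $\gamma_{\A}\colon X\to\Gn(\A,\Cn)$, and I would take $\iota_{\A}$ as the candidate isomorphism.

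Next I would verify that $\iota_{\A}$ transports the pointwise liftings of the operations $\sigma_i$ acting on $\D^{\sigma}(\A)$ to the maps $g_i^{\XX},f_i^{\XX},h_j^{\XX}$ of the theorem, treating the two coordinates of a pair $(\w\circ x,\ran x)$ separately. For a partial map $g_i$, the identity $g_i(k)=\top\iff k=\top$ gives $\w\circ(g_i\circ x)=\w\circ x$, so the first coordinate is fixed, while $\ran(g_i\circ x)=g_i(\ran x)$ governs the second; the domain condition $i\notin\ran x$ matches $i\notin U$. The map $f_i$ is handled symmetrically, being the converse of $g_i$, so every $\sigma\in\Sigma_n$ is covered once the endomorphism coordinate is dealt with. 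The substantive case is $h_j$. Here $\w(h_j(k))=1$ holds exactly when $k\in\{n-2,n-1\}$, independently of $j$, so $\w\circ(h_j\circ x)$ is the $\CCD$-homomorphism sending $a$ to $1$ precisely when $x(a)\geq n-2$. When $n-2\notin\ran x$ this coincides with $\w\circ x=y$; when $n-2\in\ran x$, Lemma~\ref{lem:isoupchain} identifies it as the element of the chain ${\uparrow}(\w\circ x)$ immediately above $\w\circ x$, that is, the upper cover $s(y)$. Because $h_j$ sends both $n-2$ and $n-1$ to $\top$ but is injective below, $\ran(h_j\circ x)=h_j(\ran x)$ has exactly the cardinality required for the image to lie in $X$ in each of the two cases; this reproduces the case distinction in the definition of $h_j^{\XX}$.

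Finally I would reconcile the topologies. Every clopen up-set of $\fnt{HU}(\A)$ has the form $W=k_{\A}(a)$ for some $a\in\A$, and Lemma~\ref{lem:isoupchain} yields $|{\uparrow}y\cap W|=|\{k\in\ran x\setminus\{0\}\mid k\leq x(a)\}|$. Substituting this into the definition of $A_{W,i}$, the condition on a pair $(y,U)=(\w\circ x,\ran x)$ reduces to $i\in\ran x$ together with $|\{k\in\ran x\mid k\leq i\}|=|\{k\in\ran x\mid k\leq x(a)\}|$, which for $i\in\ran x$ forces $i=x(a)$. Hence $\iota_{\A}(\{x\mid x(a)=i\})=A_{W,i}$. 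As $a$ runs through $\A$ and $i$ through $C_n$, the clopen sets $\{x\mid x(a)=i\}$ form a subbasis for the subspace topology that $\D^{\sigma}(\A)$ inherits from $(C_n)^A$, and these correspond under $\iota_{\A}$ exactly to the generating family $\{A_{W,i}\}$ of $\Tp^{\XX}$; therefore $\iota_{\A}$ is a homeomorphism. Assembling the three steps shows $\iota_{\A}$ is an isomorphism of topological structures, giving $\D^{\sigma}(\A)\cong\X$.

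I expect the main obstacle to be the $h_j$ step, where the two coordinates are controlled by different information: the first coordinate depends only on the $\w$-image and the order of $\fnt{HU}(\A)$, whereas the second depends on the literal range. The crux is to match the purely order-theoretic passage to the cover $s(y)$ with the algebraic fact that $h_j$ collapses exactly the top two elements of $\Cn$ onto $\top$; identifying $s(y)$ via Lemma~\ref{lem:isoupchain} and checking that the resulting drop in depth is compensated by the loss of one element of $U$ is where the genuine content lies. The partial-operation and topological verifications, by contrast, are routine consequences of Lemmas~\ref{lem:isoupchain}--\ref{Lem:InvRevEn}.
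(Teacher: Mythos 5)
Your proposal is correct and follows essentially the same route as the paper's proof: both take the bijection $\iota_{\A}\colon x\mapsto(\w\circ x,\ran x)$ from Lemma~\ref{lem:chain-hom} as the isomorphism, verify preservation of the operations coordinatewise (with the same two-case analysis for $h_j$, using $\w\circ g_i\circ x=\w\circ x$ and identifying $\w\circ h_j\circ x$ with $s(\w\circ x)$ when $n-2\in\ran x$), and match the topologies via the correspondence $A_{k_{\A}(a),i}\leftrightarrow\{\,x\mid x(a)=i\,\}$. If anything, you are slightly more explicit than the paper on the identification of $X$ with the set $T$ of Lemma~\ref{lem:chain-hom} and on why the image of $h_j^{\XX}$ lands in $X$, but these are details the paper leaves implicit rather than a different method.
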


\begin{proof}
Lemma~\ref{lem:chain-hom} sets 
up  mutually inverse  bijections
$\iota_{\A} \colon \Gn(\A,\Cn) \to X$ and  
$\gamma_{\A}  \colon X\to \Gn(\A,\Cn)$, where 
$\iota_{\A}  \colon x \mapsto (\w \circ x, \ran x)$. 
From the description of $\gamma_{\A} $ given there, for each 
$a\in\A$ and $i\in\Cn$, we have  $(\gamma_{\A} (y,U))(a)=i$ 
if and only if $i\in U$ and 
$|{\downarrow} i\cap U|-1=
|\{u\in {\uparrow}y\mid u\in k_{\A}(a)\}|$.
Thus 
\[
\gamma_{\A} (A_{k_{\A}(a),i})=\{\,x\in \D(\A)\mid x(a)=i\,\}
\]
and hence $\iota_{\A} $ determines a homeomorphism between $(X;\Tp^{\X})$ and~$\D^\sigma(\A)$.

   In what follows $g_i^{\scriptscriptstyle\D(\A)}$, $f_i^{\scriptscriptstyle\D(\A)}$ and $h_j^{\scriptscriptstyle\D(\A)}$ denote the lifting of the (partial) maps $g_i$, $f_i$ and $h_j$ to $\Gn(\A,\Cn)$ for each $i\in\{1,\ldots,n-3\}$ and $j\in\{1,\ldots,n-2\}$.

Let $i\in\{1,\ldots, n-3\}$ and $x\in\Gn(\A,\Cn)$. 
Then $x\in \dom (g_i^{\scriptscriptstyle\D(\A)})$ if and only if 
$\ran x\subseteq \dom g_i$, that is, if $i\notin \ran x$. 
In this case
\begin{align*}
\iota_{\A} (g_i^{\scriptscriptstyle \D(\A)}(x))&=\iota_{\A} (g_i\circ x)=(\w\circ g_i\circ x, \ran( g_i\circ x))\\
&=(\w\circ f, g_i(\ran x))=g_i^{\XX}(\iota_{\A} (x)).
\end{align*}
The proof that $\iota_{\A} \circ f_i^{\scriptscriptstyle \D(\A)}=
f_i^{\XX}\circ \iota_{\A} $
is similar.
Finally, let $j\in\{1,\ldots, n-2\}$ and $x\in\Gn(\A,\Cn)$. 
Then 
\[
\iota_{\A} (h_j^{\scriptscriptstyle \D(\A)}(x))=\iota_{\A} (h_j\circ x)=(\w\circ h_j\circ x, h_j(\ran x)).
\]
Here we have two cases.
 If $n-2\notin\ran x$ then $\w\circ h_j\circ x=\w\circ x$. 
If $n-2\in\ran x$ then $\w\circ h_j\circ x=s(\w\circ x)$. 
We deduce that $\iota_\A$ preserves  $\sigma_i$ for 
$1 \leq i \leq n-2$.
\end{proof}

A remark is in order here on what we have really
achieved in Theorem~\ref{thm:going back}.  We have already 
observed that our `going back' construction builds 
(up to isomorphism)
 $\D^\sigma (\A)\in \cat Y^\sigma$ solely from the topology and order of the 
Esakia space $\fnt{HU}(\A)$
(as encoded by the map $s$).
  That is, the construction is performed 
without directly involving the algebra~$\A$.  This means that we 
can  carry out the procedure on any Esakia space in $\cat F_n$, 
regardless of whether or not the space is explicitly represented in 
the form $\fnt{HU}(\A)$ (as it can be, certainly).

We introduced the category $\cat{Y}^\sigma$ earlier but did not 
then give an explicit description of the equivalence between this 
category and $\cat F_n$
indicated in Fig.~\ref{Fig:functors}.  We can now remedy this 
omission.  
Define  $\fnt{F}^\sigma \colon \cat{Y}^\sigma \to \cat F_n$ on 
objects by letting 
\[
\fnt{F}^\sigma (\X) =(X/{\newapprox_{\XX}^{\sigma }};\leq_{\XX}^{\sigma },\Tp^{\XX}/{\newapprox_{\XX}^{\sigma }}).\]
Now define 
 $\fnt{F}^{\sigma }(\eta)$, for each  $\eta\in\cat{Y}^{\sigma }(\X,\Y)$,
 as follows:
\[
\fnt{F}^{\sigma }(\eta)([x]_{\newapprox_{\XX}^{\sigma }})=[\eta(x)]_{\newapprox_{\YY}^{\sigma }}  \quad \text{(for  $x\in \X$)}.
\]
The fact that $\fnt{F}^{\sigma }(\eta)$ is well defined follows from the definition of $\newapprox_{\YY}^{\sigma }$ and the fact that $\eta$ preserves $\sigma _1,\ldots,\sigma _{n-3}$. Since $\eta$  preserves $\sigma_{n-2}$, it is straightforward to check that $\fnt{F}^{\sigma }(\eta)$ is an Esakia morphism. 
Then
$\fnt{F}^{\sigma }$ is a functor naturally equivalent to
 $\fnt{HU}\restrict{\Gn} \,
 \fnt{E}^{\sigma }\restrict{\cat Y^{\sigma }}$. 

\begin{figure}  [ht]
\begin{center}
\begin{tikzpicture} 
[auto,
 text depth=0.25ex,
 move up/.style=   {transform canvas={yshift=1.9pt}},
 move down/.style= {transform canvas={yshift=-1.9pt}},
 move left/.style= {transform canvas={xshift=-2.2pt}},
 move right/.style={transform canvas={xshift=2.2pt}}] 
\matrix[row sep= 1.8cm, column sep= 4cm]
{ 
\node (Yn){$\cat Y^{\sigma }$};& \node (Gn) {$\Gn$};\\
&\node (CFn) {$\CFn$}; \\
};
\draw [latex-,move left ] (CFn) to node [xshift=4pt]  {$\fnt{HU}\restrict{\Gn}$}(Gn);
\draw [-latex,move right] (CFn) to node [swap]  {$\fnt{VK}\restrict{\CFn}$}(Gn);
\draw [-latex,move up] (Gn) to node [swap] {$\D^{\sigma }$}(Yn);
\draw [latex-,move down] (Gn) to node {$\E^{\sigma }$}(Yn);
\draw [latex-, transform canvas={yshift=-1pt,xshift=0pt}](Yn) 
to 
node [yshift=-4pt]  {$\fnt{G}^{\sigma }$} (CFn);
\draw [-latex, transform canvas={yshift=-5pt,xshift=0pt}](Yn) 
to 
node [swap,xshift=3pt] {$\fnt{F}^{\sigma }$} (CFn);
\end{tikzpicture}
\end{center}
\caption{The equivalence set up by the functors $\fnt{F}^{\sigma }$ 
and $\fnt{G}^\sigma$}  
 \label{Fig:functors}
\end{figure}

As with $\fnt{F}^{\sigma }$, the assignment 
$\Y\mapsto \fnt{G}^{\sigma }(\Y)= 
(X; \sigma _{1}^{\XX}, \ldots,\sigma _{n-1}^{\XX},\Tp^{\XX})$ 
can be extended to a functor from $\CFn$ into $\cat{Y}^{\sigma }$. 
In this case we will not present explicitly the action of 
$\fnt{G}^{\sigma }$ on maps. 
For our purposes, it is enough to observe that 
Theorems~\ref{thm:RevEngGodel} and \ref{thm:going back} imply 
that $\fnt{G}^{\sigma }$ can be made into a functor naturally 
equivalent to 
$\D^{\sigma} 
\fnt{V}\, 
\fnt{K}\restrict{\CFn}$; 
equivalently, $\fnt{F}^{\sigma }$ together with $\fnt{G}^{\sigma }$ 
determine a categorical equivalence between $\cat Y^{\sigma }$ 
and $\CFn$.  

In the same way as we did in  Section~\ref{sec:natdual} we end this section with a comment about $\G_3$ and $\G_2$. 
Using the alter ego $\twiddle{\C_3}=(\C_3;h_1,\Tp)$ for $\C_3$, 
Davey  \cite[pp.~126--127]{D76} 
shows how 
to obtain  $\fnt H(\A)$ from $\D(\A)$
 he first observes that  $\fnt H(\A)$ and $\D(\A)$ are homeomorphic as topological spaces. In our terms this means that $x\newapprox y$ if and only if $x=y$. This is actually the natural way to define $\newapprox$ in the absence of partial endomorphisms. Moreover, if we identify $\D(\A)/{\newapprox}$ with $\D(\A)$ 
then  
the order Davey defines on $\D(\A)$ is exactly the reflexive (transitive) closure of $\newcover$. 
This is only one side of the translation; 
the other direction can be obtained by the same construction and argument used in Theorem~\ref{thm:going back}. 
For $\G_2$,
 the translation between the  duality yielded by $(C_2;\Tp)$ and the Priestley/Esakia duality
is essentially that whereby a Boolean space is regarded as a special case of a Priestley space.

\section{The quest for full dualities} \label{Sec:Fullness}

Our first application of the translations developed in 
Section~\ref{sec:Translation} is to pick out  the full dualities from 
among the dualities we developed in Theorem~\ref{thm:newduality}.

In Theorem~\ref{Theo:Fullness} 
we show that, for $n \geq 4$, all  choices of $\sigma\in \Sigma_n$
 except $\sigma =(g_1,\ldots,g_{n-3},h_1)$ 
lead to  dualities which are not full.  Our
strategy is similar to one used by Davey; see his proof 
that $\End\Cn$ fails
to dualise $\Gn$ fully when $n \geq 4$ \cite[p.~127]{D76}.
     
The primary tool for establishing that a natural duality is full
is to establish that it is strong (see \cite[Chapter~3]{CD98} for the definitions and
discussion). The dualities for $\G_2$ and for $\G_3$ yielded  by the alter egos $ (C_2; \Tp)$ and 
$(C_3; h_1,\Tp)$ contain no partial 
operations. 
 They are known to be strong (see \cite[Theorem~4.2.3(ii)]{CD98}  
for the case $n=3$) and hence full.    
When $n=4$, the fact that 
$\twiddle{\C_n^\sigma}$, with $\sigma=(g_1,h_1)$, 
determines a (strong and hence) full duality for $\G_4$ was proved by Davey and Talukder in \cite[Theorem~6.1]{DT05}.
 We shall show that,   for any 
${n\geq 4}$, 
the dualising set  
$\{g_1, \ldots, g_{n-3}, h_1\} $ yields a full duality.  Our proof uses 
Theorems~\ref{thm:RevEngGodel} and~\ref{thm:going back}.  
 It is this technique, and  the fact that fullness is obtained directly, and not 
via strongness,  that we wish  to accentuate here.  
We note also that Davey's proof of fullness of his endomorphism-based duality for $\G_3$
\cite[pp.~126--127]{D76} may be seen as essentially a very special case of our method.  
Our  full dualities 
are necessarily strong;  see \cite[pp.~13--14]{DHT07}, where G\"odel algebras are called relative Stone Heyting algebras.  (The paper \cite{DHT07}, a stepping stone 
along the way to the final resolution in the negative of the longstanding ``ful
equals strong?" question,   identifies various well-known varieties for which 
non-strong full dualities cannot be found.)    
As an aside, we note that  
 a small generating set for 
the monoid $\PE\Cn$ is needed if axiomatisation of the dual category is to feasible.  
We can claim to have set up as good a full duality 
as is possible for addressing the axiomatisation problem for general~$n$.
However  in this paper we shall not seek an  extension of
\cite[Theorem~6.1]{DT05},  which relies both on  a suitable generating set for $\PE \C_4$  being found by hand and on standardness arguments
(see \cite[Section~3]{DT05}).

\begin{theorem}\label{Theo:Fullness} Let $n\geq 4$ and $\sigma\in \Sigma_n$.
Then
the alter ego  $\twiddle{\Cn^{\sigma }}$ fully dualises~$\Gn$ if and only if
 $\sigma =(g_1,\ldots,g_{n-3},h_1)$. 
\end{theorem}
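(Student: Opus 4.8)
The plan is to reformulate fullness in terms of the two-way translation of Section~\ref{sec:Translation} and then reduce everything to a combinatorial statement about how an object meets the fibres over its associated forest. Recall first the standard fact that, since each counit $\varepsilon_{\X}$ is always an embedding, the alter ego $\twiddle{\Cn^{\sigma}}$ fully dualises $\Gn$ if and only if every object $\X \in \CXn^{\sigma} = \IScP(\twiddle{\Cn^{\sigma}})$ is isomorphic to $\D^{\sigma}(\A)$ for some $\A \in \Gn$. The decisive observation, already flagged after Theorem~\ref{thm:RevEngGodel}, is that ${\newapprox}$, ${\newcover}$ and $\leq$ are defined purely from the liftings of the operations of $\sigma$ and hence make sense on \emph{every} $\X \in \CXn^{\sigma}$, not merely on duals. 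So I would first check that for an arbitrary $\X$ the quotient $(X/{\newapprox};\leq,\Tp/{\newapprox})$ is an object of $\CFn$, thereby extending the formula defining $\fnt{F}^{\sigma}$ to a functor $\bar{\fnt{F}}^{\sigma}\colon \CXn^{\sigma}\to\CFn$; the depth bound $n-2$ comes exactly as in Lemma~\ref{lem:isoupchain}, and the topological points are handled as in the proof of Theorem~\ref{thm:RevEngGodel}. Since $\fnt{G}^{\sigma}$ always produces a dual (Theorem~\ref{thm:going back}), fullness becomes equivalent to the assertion that $\X\cong\fnt{G}^{\sigma}\bar{\fnt{F}}^{\sigma}(\X)$ for every $\X\in\CXn^{\sigma}$.

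For the sufficiency direction, fix $\sigma^{*}=(g_1,\ldots,g_{n-3},h_1)$ and an object $\X$. The counit $\varepsilon_{\X}$ embeds $\X$ into the genuine dual $\D^{\sigma^{*}}\E^{\sigma^{*}}(\X)$, which the translation identifies (via $\bar{\fnt{F}}^{\sigma^{*}}$ and Theorem~\ref{thm:going back}) with $\D^{\sigma^{*}}(\A)$ for the algebra $\A$ satisfying $\fnt{HU}(\A)\cong\bar{\fnt{F}}^{\sigma^{*}}(\X)$. As $\D^{\sigma^{*}}(\A)$ has full fibres over the forest $\bar{\fnt{F}}^{\sigma^{*}}(\X)$, fullness on $\X$ amounts to showing that $\X$ already meets each fibre completely. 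I would argue one ${\newapprox}$-class at a time: to each element of a class I attach the coding of $\Cn$ determined by its $\sigma^{*}$-behaviour (the structural analogue of $\ran x$ for $x\in\D(\A)$), and distinct elements receive distinct codings by the injectivity already in Lemma~\ref{lem:isoupchain}, so $\varepsilon_{\X}$ is injective on each class. It remains to show every admissible coding is realised, and this is where the choice $\sigma^{*}$ is decisive.

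The key combinatorial claim is that the monoid of transformations of the admissible codings of a fixed size generated by $g_1,\ldots,g_{n-3}$ and $h_1$ acts transitively, so that closure of $\X$ under $\sigma^{*}$ forces every coding to occur in each class. The point is a balance of directions: each $g_i$ slides one coordinate \emph{down} (replacing $i+1$ by $i$), while $h_1$ shifts every nonzero coordinate \emph{up} by one, so that composing $h_1$ with suitable $g_i$'s realises each single upward slide $f_k$ with $k\leq n-3$, and the $g_i$'s realise each single downward slide; hence all codings of a given size are interconnected and all are forced. Granting this, $\varepsilon_{\X}$ is a bijection on each class, so an isomorphism, and $\X\cong\D^{\sigma^{*}}(\A)$. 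The main obstacle is exactly the verification of this transitivity, together with the careful bookkeeping of the partial domains so that each sliding move keeps one inside the class it started in rather than jumping to another fibre; this same combinatorial core also drives the converse.

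For the necessity direction I would show that for every $\sigma\neq\sigma^{*}$ the generated monoid fails to act transitively on the codings of some fixed size, and then use this to build a finite object of $\CXn^{\sigma}$ with a deficient fibre, in the spirit of Davey's argument \cite[p.~127]{D76}. If $\sigma$ replaces some $g_i$ by $f_i$, then the downward slide taking $i+1$ to $i$ is lost and is not recovered by composing the remaining operations; if $\sigma$ uses $h_j$ with $j\geq 2$, then no operation lifts the coordinate $1$, so a coding differing from another only in that lift is unreachable. In either case a suitable coding is never forced by closure, and deleting it from the relevant fibre of a finite dual produces a closed substructure $\X$ for which $\varepsilon_{\X}$ is not surjective. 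Hence such $\twiddle{\Cn^{\sigma}}$ dualises but does not fully dualise $\Gn$, and together with the sufficiency direction this proves the stated equivalence.
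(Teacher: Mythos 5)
Your blueprint coincides in essence with the paper's: the reformulation of fullness as ``every object of $\IScP(\twiddle{\Cn^{\sigma}})$ is isomorphic to a dual space'', the observation that $\newapprox$ and $\newcover$ are available on arbitrary objects, the combinatorial heart (with $h_1$ present, each upward slide $f_i$ is a composite of $h_1$ and the $g_k$'s, so a closed substructure is forced to be a union of $\newapprox$-classes), and deficient-fibre counterexamples for all other $\sigma$ (your two failure modes are exactly the paper's Case 1 and Case 2). However, your sufficiency argument contains a genuine circularity as written: you embed $\X$ via the counit into $\D^{\sigma^{*}}\E^{\sigma^{*}}(\X)$ and then identify this space with $\D^{\sigma^{*}}(\A)$ for the algebra $\A$ satisfying $\fnt{HU}(\A)\cong\bar{\fnt{F}}^{\sigma^{*}}(\X)$. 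Theorems~\ref{thm:RevEngGodel} and~\ref{thm:going back} supply such identifications only for spaces already known to be of the form $\D^{\sigma^{*}}(\B)$; for an arbitrary $\X$ the assertion $\fnt{HU}(\E^{\sigma^{*}}(\X))\cong\bar{\fnt{F}}^{\sigma^{*}}(\X)$ is, at the forest level, precisely the surjectivity of $\varepsilon_{\X}$ you are trying to establish, so it cannot be used to set up the ambient space. The repair is the paper's own move: since $\X\in\IScP(\twiddle{\Cn^{\sigma^{*}}})$, it is by definition a closed substructure of some power $(\twiddle{\Cn^{\sigma^{*}}})^{S}\cong\D^{\sigma^{*}}(\Free_{\Gn}(S))$; this embedding comes for free (no counit needed), the ``codings'' are then literally ranges so Lemmas~\ref{lem:isoupchain}--\ref{lem:endo-range} apply verbatim, your transitivity argument shows $\X$ is a union of $\newapprox$-classes of the power, and one then checks that $\X/{\newapprox}$ is a closed up-set of the free algebra's forest (closure under $h_1$ gives the up-set property), hence equals $\fnt{HU}(\B)$ for some $\B\in\Gn$, whence $\X\cong\fnt{G}^{\sigma^{*}}(\fnt{HU}(\B))\cong\D^{\sigma^{*}}(\B)$ by Theorem~\ref{thm:going back}.

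The necessity half has a smaller gap of the same flavour: from a deficient fibre you conclude that ``$\varepsilon_{\X}$ is not surjective'', but verifying this requires computing $\E^{\sigma}(\X)$ (equivalently $\D^{\sigma}\E^{\sigma}(\X)$), which you never do. The efficient conclusion is the one your own opening paragraph makes available: any space that is a dual must satisfy $\X\cong\fnt{G}^{\sigma}\bar{\fnt{F}}^{\sigma}(\X)$, so it suffices to exhibit a concrete discrepancy between the two sides for your truncated space---cardinality when some $\sigma_i=f_i$, and the domain of $\sigma_1^{\XX}$ when $\sigma_{n-2}=h_j$ with $j\geq 2$---which is exactly how the paper argues. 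Note also a subtlety your sketch passes over: deleting fibre elements can change the quotient \emph{order} as well as the fibres, because the witness $z$ with $\sigma_{n-2}^{\XX}(z)$ in the upper class must itself lie in $\X$; in the $h_j$ case the two surviving classes become incomparable precisely because $h_j(1)=1$, and it is this phenomenon, rather than the missing fibre elements alone, that makes $\X\not\cong\fnt{G}^{\sigma}\bar{\fnt{F}}^{\sigma}(\X)$. Your monoid-theoretic claims themselves (no composite of the remaining operations recovers $g_i$ once it is replaced by $f_i$, and no available operation moves the value $1$ when $h_j$, $j\geq2$, is used) are correct and match the structural reasons behind the paper's two counterexamples.
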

\begin{proof}

Assume first that $\sigma \neq (g_1,\ldots,g_{n-3},h_1)$. 
We divide the problem into two cases. Both  proofs employ  the 
same tool. 
Since the functors 
$\fnt{G}^{\sigma }\colon \CFn\to \cat{Y}_n^{\sigma }$ and 
$\fnt{F}^{\sigma }\colon \cat{Y}_n^{\sigma }\to \CFn$ determine a 
categorical equivalence, $\X$ is isomorphic to 
$\fnt{G}^{\sigma }(\fnt{F}^{\sigma }(\X))$ for every 
$\X\in\cat{Y}_n^{\sigma }$.
Therefore we present a space $\X$ in 
$\IScP(\twiddle{\Cn^{\sigma }})$ and we observe that $\X$ is not 
isomorphic to $\fnt{G}^{\sigma }(\fnt{F}^{\sigma }(\X))$, which 
proves that 
$\cat{Y}_n^{\sigma }\neq \IScP(\twiddle{\Cn^{\sigma }})$, that is,   
$\twiddle{\Cn^{\sigma }}$ does not yield a full duality.
In the proof below we shall omit subscripts and superscripts,
for example from ${\newapprox},$ where these are clear from the 
context. 

\noindent{\bf Case 1}: Assume that
$\sigma_i=f_i$
for some $i\in\{1,\ldots,n-3\}$.

Let $j$ be such that $\sigma_j = f_j$ and $\sigma_i = g_i$ for any
 $j <i \leq n-3$.  
 Let $\X$ be  the subspace of $\twiddle{\Cn^{\sigma }}$ whose 
universe $X$ is $\{j+1,\ldots,n-1\}$. 
Observe that $\dom \sigma_i^{\XX}=X$ if $i\neq j$ and 
$\dom \sigma _j^{\XX}=\{j+2,\ldots, n-1\}$. 
Now assume $\X\in \cat{Y}^{\sigma }$. 
By assumption,
the quotient space  
$X/{\newapprox}$ has two elements 
$  \{j+1,\ldots,n-2\}$ and $\{n-1\}$. Then
$\fnt{F}^{\sigma }(\X)=
(\{\,[n-2]_\newapprox,[n-1]_\newapprox\,\};\leq,\Tp)$, where 
$\Tp$ is the discrete topology and 
$[n-2]_\newapprox<[n-1]_\newapprox$.  
The universe of  $\fnt{G}^{\sigma }(\fnt{F}^{\sigma }(\X))$ is 
${
\{([n-1]_\newapprox,\emptyset),([n-2]_\newapprox,\{1\}),\ldots,
([n-2]_\newapprox,\{n-2\})\}}$.
It follows that $|\fnt{G}^{\sigma }(\fnt{F}^{\sigma }(\X))|=n-1$ and 
$|X|=n-1-j$.
Therefore $\fnt{G}^{\sigma }(\fnt{F}^{\sigma }(\X))$  and $\X$ are 
not isomorphic.
 
\noindent{\bf Case 2}: Assume that $\sigma _i=g_i$ for 
$1 \leq i \leq n-3$ and  $\sigma_{n-2} =h_i$, where $i\neq 1$.  

With this assumption $h_i(1)=1$,  and so  $\{1,n-1\}$ is a closed 
subuniverse of $\twiddle{\Cn^{\sigma }}$. 
Let $\X$ be the subspace of $\twiddle{\Cn^{\sigma }}$ whose 
universe $X$ is $\{1,n-1\}$. 
Observe that $\dom \sigma_i^{\XX}=X$ if $i\neq 1$ and 
$\dom \sigma _1^{\XX}=\{n-1\}$. 
Now assume $\X\in \cat{Y}^{\sigma }$. 
Since $\sigma _i=g_i$ for $1 \leq i \leq n-3$, the definition of 
$\newapprox$ implies that $X/{\newapprox}$ has two classes 
$\{1\}$ and $\{n-1\}$. 
Because  $h_i(1)=1$ and $h_i(n-1)=n-1$, the space 
$\fnt{F}^{\sigma }(\X)$ is 
$(\{\,[1]_\newapprox,[n-1]_\newapprox\,\}; =,\Tp)$, where 
$\Tp$ is the discrete topology.  
The universe of $\Z=\fnt{G}^{\sigma }(\fnt{F}^{\sigma }(\X))$ is 
$\{([1]_\newapprox,\emptyset),([n-1]_\newapprox,\emptyset)\}$
and $\dom \sigma _1^{\ZZ}=Z$. 
Since $\dom \sigma _1^{\XX}=\{n-1\}$, the spaces $\Z$  and $\X$ 
are not isomorphic.

Now assume $\sigma =(g_1,\ldots,g_{n-3},h_1)$. 
Since $\twiddle{\Cn^\sigma }$ determines a duality on $\G_n$, for 
each non-empty set $S$,
 the space $(\twiddle{\Cn^\sigma })^{S}$ is isomorphic to the dual space of the 
$S$-generated free algebra $\Free_{\Gn}(S)$ in $\Gn$
\cite[Corollary 2.24]{CD98}.  
Therefore, to prove that  $\twiddle{\Cn^\sigma }$ determines a full 
duality, it is enough to prove that each closed substructure $\X$ of 
$\D^{\sigma }(\Free_{\Gn}(S))$, for some set $S$, is isomorphic to 
the dual space of some
 algebra in $\Gn$.  

Let us fix a non-empty set $S$ and a closed substructure $\X$ of 
$(\twiddle{\Cn^\sigma} )^{S}$.  
Let $x\in\X$ and $y\in (\twiddle{\Cn^\sigma })^{S}$ be such that 
$x\newapprox y$. We claim that $y\in \X$. 
By the definition of $\newapprox$ there is no loss of generality in 
assuming that $x=g_i^{\XX}(y)$ or $y=g_i^{\XX}(x)$ for some 
$i\in\{1,\ldots, n-3\}$. If $y=g_i^{\XX}(x)$, since $\X$ is a closed 
substructure of $(\twiddle{\Cn^\sigma })^{S}$, it follows directly 
that $y\in \X$. 
If $x=g_i^{\XX}(y)$ then, for each $s\in S$, we have $x_s=g_i(y_s)$
and hence $x_s\in\dom f_i$ and $f_i(x_s)=y_s$.   
Now observe that for $i\in\{1,\ldots,n-1\}$, the partial 
endomorphism $f_i$  equals 
$g_{i-1}\circ g_{i-2}\circ\cdots\circ g_2\circ g_1\circ h_1 \circ g_{n-3}\circ g_{n-2}\circ \cdots \circ g_{i+2}\circ g_{i+1}$. 
Then  
\[
y=
g_{i-1}^{\XX}\circ g_{i-2}^{\XX}\circ\cdots\circ g_2^{\XX}\circ 
g_1^{\XX}\circ h_1 ^{\XX}\circ g_{n-3}^{\XX}\circ g_{n-2}^{\XX}\circ 
\cdots \circ g_{i+2}^{\XX}\circ g_{i+1}^{\XX}(x),
\]
and we have established our claim. 

Let $\A = \Free_{\Gn}(S)$ and let 
$\Y=(\D^{\sigma }(\A)/{\newapprox};\leq,\Tp^{\YY})$ be
 as defined in Theorem~\ref{thm:RevEngGodel}.
The fact that, for each $x\in\X$,  the class $[x]_{\newapprox}$ is 
contained in $\X$ implies that 
$\X/{\newapprox}\subseteq \D^{\sigma }(\A)/{\newapprox}$. 
Let $\Z$ be the substructure of $\Y$ whose universe is $\X/{\newapprox}$. 
Since $\X$ is a closed subset of  $\D^{\sigma }( \A)$ and  
$\Tp^{\Y}$ is the quotient topology, $\X/{\newapprox}$ is a closed 
subset of $(\,\D^{\sigma }(\A) /{\newapprox};\Tp^{\Y}\,)$. 
From the fact that $\X$ is closed under $h_1$  and the definition of 
$\leq$ in $\D^{\sigma }(\A)/{\newapprox}$, it follows that  
$\X/{\newapprox}$ is an up-set in 
$(\D^{\sigma}(\A)/{\newapprox};\leq)$. 
We conclude that $\Z$ belongs to $\CFn$.
Therefore there exists  $\B\in\Gn$ such that 
$\fnt{HU}(\B)\cong \Z$. 
By Theorem~\ref{thm:going back}, 
$\D^{\sigma }(\B)\cong \fnt{G}^{\sigma }(\Z)$.

By Theorems~\ref{thm:RevEngGodel} and~\ref{thm:going back},
the map $\iota_{\A} \colon \D^{\sigma }(\A) 
\to \fnt{G}^{\sigma }\fnt{F}^{\sigma }(\D^{\sigma }(\A))$, defined 
by  
$\iota_{\A} (x)= (\omega \circ x,\ran x)$, 
sets up an isomorphism. 
Since for $x\in\X$ the class $[x]_{\newapprox}$ is contained in 
$\X$, the restriction of $\iota_{\A}$ to $\X$ determines a bijection 
between $\X$ and $\fnt{G}^{\sigma }(\Z)$. 
Moreover, $\iota_{\A}{\restriction}_{\XX} $ is an 
isomorphism between $\X$ and $\fnt{G}^{\sigma }(\Z)$
in the category $\IScP(\twiddle{\Cn^{\sigma }})$. 
Putting the components of the proof together we deduce that
$\X\cong \fnt{G}^{\sigma }(\Z)\cong \D^{\sigma }(\B)$.
\end{proof}

\section{Applications: amalgamation and coproducts} 
\label{sec:coprod}

Our second application of our interactive duality theory concerns 
amalgamation in varieties of G\"odel algebras.  
Our main result here is Theorem~\ref{thm:APthm}.
Along the way we expose the interrelation between satisfaction of 
the amalgamation property in a finitely generated quasivariety and 
the existence of an alter ego which is a total structure yielding a 
strong duality:   the result given in Lemma~\ref{Lem:TotalDualAmal} 
is not new, but we do present a simpler and more self-contained 
proof of it than that of  \cite[Lemma~5.3]{CD98}.

Given a class of algebras $\CA$, a \defn{V-formation} is a quintuple 
$(\A,\B,\C,f_B,f_C)$ where 
$\A,\B,\C \in \CA$,   and 
$f_B\in\CA(\A,\B)$ and 
$f_C\in\CA(\A,\C)$ are  injective homomorphisms. 
A $V$-formation $(\A,\B,\C,f_B,f_C)$ 
\defn{admits amalgamation} if there exist an 
algebra~${\str{D}\in\CA}$ and embeddings $h_B\colon\B\to\str{D}$ 
and $h_C\colon\C\to\str{D}$ such that 
$h_B\circ f_B=h_C\circ f_C$. 
The class $\CA$ has the \defn{amalgamation property} if every 
$V$-formation admits amalgamation. 

Let  $\CA$ be a quasivariety. It is well known that $\CA$ admits all 
colimits  and in particular pushouts, also known as fibred 
coproducts (see \cite[Chapter~III]{McL} for definitions and 
properties of colimit, pushout, and coproduct). 
Let $(\A,\B,\C,f_B,f_C)$ be a $V$-formation in $\CA$. 
Then $(\A,\B,\C,f_B,f_C)$ admits amalgamation if and only if the 
pushout maps $p_B\colon\B\to\B\coprod_{\A}\C$ and 
$p_C\colon\B\to\B\coprod_{\A}\C$ are embeddings.
With this in mind, we shall  focus below on the properties of 
pushouts, and particularly on when pushout maps are embeddings.

\begin{lemma}\label{Lem:TotalDualAmal}
Let $\CA=\ISP(\M)$ be the quasivariety generated by a finite 
algebra~$\M$ and assume that there is a total structure~$\MT$
which yields  a strong duality on~$\CA$.  
Then~$\CA$ has the amalgamation property.
\end{lemma}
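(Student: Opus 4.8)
The plan is to work entirely through the duality: a strong duality makes $\D\colon\CA\to\CX$ and $\E\colon\CX\to\CA$ a pair of quasi-inverse dual equivalences between $\CA$ and $\CX=\IScP(\MT)$, so an amalgam of a $V$-formation can be manufactured by forming a pullback on the dual side and transporting it back. Starting from a $V$-formation $(\A,\B,\C,f_B,f_C)$ with $f_B,f_C$ embeddings, I would first isolate the two facts about how the duality moves embeddings and surjections. The elementary one is that $\E$ carries surjections to embeddings: if $q$ is onto then $\E(q)=-\circ q$ is injective, hence an embedding in $\CA$. The substantive one, call it $(\dagger)$, is that the dual $\D(f)$ of an embedding $f$ is a surjection in $\CX$. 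This is where strongness is consumed: it is the standard fact that strong dualities interchange embeddings and surjections \cite[Chapter~3]{CD98}, and amounts here to $\M$ being injective in $\CA$; alternatively it can be read off from strongness directly, since $\D$ sends the mono $f$ to an epi in $\CX$ and injectivity of $\MT$ forces epis in $\CX$ to be onto. I expect establishing $(\dagger)$ cleanly to be the main obstacle — everything after it is formal.

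Next I would run the pullback construction. By $(\dagger)$ the maps $\D(f_B)\colon\D(\B)\to\D(\A)$ and $\D(f_C)\colon\D(\C)\to\D(\A)$ are surjections. Put
\[
P=\{\,(x,y)\in\D(\B)\times\D(\C)\mid \D(f_B)(x)=\D(f_C)(y)\,\},
\]
with projections $\pi_B\colon P\to\D(\B)$ and $\pi_C\colon P\to\D(\C)$. Because $\MT$ is a \emph{total} structure, $P$ is a closed substructure of the product $\D(\B)\times\D(\C)$ — it is the equalizer of the two continuous structure-preserving maps into the Hausdorff object $\D(\A)$ — and hence $P\in\CX$; this is precisely the step where the totalness hypothesis is used. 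Surjectivity of $\D(f_C)$ gives, for each $x\in\D(\B)$, some $y$ with $\D(f_C)(y)=\D(f_B)(x)$, so $(x,y)\in P$ and $\pi_B$ is onto; symmetrically $\pi_C$ is onto.

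Finally I would dualize back. Set $\str{D}=\E(P)$ and define $h_B=\E(\pi_B)\circ e_{\B}$ and $h_C=\E(\pi_C)\circ e_{\C}$, where $e_{\B},e_{\C}$ are the (iso) unit maps. Since $\pi_B,\pi_C$ are surjections, the elementary fact above makes $\E(\pi_B),\E(\pi_C)$ embeddings, so $h_B,h_C$ are embeddings of $\B,\C$ into $\str{D}$. Applying the contravariant $\E$ to the identity $\D(f_B)\circ\pi_B=\D(f_C)\circ\pi_C$ and using naturality of the unit $e$ yields $h_B\circ f_B=h_C\circ f_C$. Thus $(\A,\B,\C,f_B,f_C)$ admits amalgamation, and as the $V$-formation was arbitrary, $\CA$ has the amalgamation property. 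The only genuine work, and the place where both hypotheses (strong and total) are spent, is $(\dagger)$ together with the membership $P\in\CX$; the rest is bookkeeping with the dual equivalence.
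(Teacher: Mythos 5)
Your construction is the same as the paper's (dualize the $V$-formation, form the fibred product $P$ of $\D(f_B)$ and $\D(f_C)$, note the projections are surjective, apply $\E$ and the unit isomorphisms), but there is a genuine gap at the point you yourself identify as the crux: the justification of $(\dagger)$. You attribute $(\dagger)$ --- that $\D$ sends embeddings to surjections, equivalently that $\M$ is injective in $\CA$ --- to strongness alone, citing ``strong dualities interchange embeddings and surjections'', with totality allegedly being spent only on the membership $P\in\IScP(\MT)$. Both attributions are wrong, and the error is fatal rather than cosmetic, because the rest of your argument is valid verbatim for alter egos with partial operations: $P$ is a closed substructure of $\D(\B)\times\D(\C)$ regardless of totality, since morphisms preserve domains of partial operations and hence the equalizer condition is preserved by every lifted operation wherever it is defined --- indeed the paper forms exactly this fibred product over the partial alter ego $\twiddle{\Cn^{\sigma}}$ in the proof of Theorem~\ref{thm:APthm}. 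Consequently, if $(\dagger)$ really followed from strongness alone, your proof would show that every quasivariety with a strong duality, total or not, has the amalgamation property. The paper itself refutes this: for $n\geq 4$ the alter ego with $\sigma=(g_1,\ldots,g_{n-3},h_1)$ yields a full, hence strong, duality on $\Gn$ (Theorem~\ref{Theo:Fullness}), while $\Gn$ fails the amalgamation property by Maksimova's theorem. The same example defeats your fallback argument: injectivity of $\MT$ does not force epimorphisms of the dual category to be onto; under this strong duality the dual of an embedding is an epimorphism (as $\D$ is a dual equivalence, by fullness), yet by Lemma~\ref{Lem:CarEmbed} it need only meet every $\newapprox$-class, not be surjective.

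Where totality is genuinely consumed is in $(\dagger)$ itself. With a total alter ego, the image of a morphism in the dual category is a closed substructure, so $\D(h)$ factors as a surjection followed by an embedding between objects of the dual category; strongness (injectivity of $\MT$, together with fullness and the counit isomorphisms) then forces that embedding to be an isomorphism, giving surjectivity of $\D(h)$. With partial operations this factorization is unavailable --- the image need not be closed under the partial operations --- and $(\dagger)$ genuinely fails, which is precisely why the paper's Section~\ref{sec:coprod} has nontrivial content. This is also why the paper proves $(\dagger)$ by citing \cite[Theorem~6.1.2 and Exercise~6.1]{CD98}, a result whose hypotheses are exactly ``total structure yielding a strong duality'', rather than anything from the general theory of strong dualities. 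Your proof becomes correct if you replace your justification of $(\dagger)$ by that citation (or by the image-factorization argument just sketched); everything downstream of $(\dagger)$ is fine and matches the paper's argument.
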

\begin{proof}
Let $\D\colon\CA\to\IScP(\MT)$ and $\E\colon\IScP(\MT)\to\CA$ 
be the functors determined by $\MT$. 
By \cite[Theorem~6.1.2 and Exercise~6.1]{CD98}, under our 
assumptions a homomorphism~$h$ in $\CA$ is an embedding if 
and only if $\D(h)$ is surjective.

Let $(\A,\B,\C,f_B,f_C)$ be a $V$-formation in $\CA$. 
Then $\D(f_B)$ and $\D(f_C)$ are surjections.
 Let $\X$ be the fibred product 
$\D(\B)\overset{\D(f_B)}{\longrightarrow}\D(\A)
\overset{\D(f_C)}{\longleftarrow}
\D(\C)$.
That is, $\X$ is the subspace of $\D(\B)\times\D(\C)$ whose 
universe is
\[
X=\{\,(x,y)\in\D(\B)\times\D(\C)\mid \D(f_B)(x)=\D(f_C)(y)\,\}.
\]
Since $\D(f_B)$ and $\D(f_C)$ are surjective, 
 the projection maps ${\pi_{B}\colon X\to\D(\B)}$ and 
$\pi_{C}\colon X\to\D(\C)$ are also surjective.   
Since the duality is full, it follows that $\E(\X)$ is the pushout of 
$(\A,\B,\C,f_B,f_C)$ and the  pushout maps are
${\E(\pi_{B})\circ e_\B\colon \B\to \E(\X)}$ and 
$\E(\pi_{C})\circ e_\C\colon \C\to \E(\X)$. 
Since $\E(\pi_{B})\circ e_\B$ and $\E(\pi_{C})\circ e_\C$ are 
embeddings, the $V$-formation $(\A,\B,\C,f_B,f_C)$ 
admits amalgamation.
\end{proof}

As we mentioned in Section~\ref{sec:intro}, 
in \cite{Maks} Maksimova  proved that 
$\Gn$ fails to satisfy the amalgamation property 
if $n \geq 4$. Combining this with Lemma~\ref{Lem:TotalDualAmal}, we obtain the following.
\begin{corollary}
The variety $\G_n$,  with 
$n\geq 4$,
does not admit a total 
{\upshape(}single-sorted\,{\upshape)} strong duality.
\end{corollary}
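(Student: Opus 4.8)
The plan is to argue by contraposition, simply combining the two ingredients that are already in place. First I would note that $\Gn=\ISP(\Cn)$ is generated by the finite algebra $\Cn$, so the standing hypotheses of Lemma~\ref{Lem:TotalDualAmal} are automatically met as soon as one posits the existence of a total dualising structure. I would then suppose, for contradiction, that $\Gn$ (with $n\geq 4$) does admit a total single-sorted strong duality; that is, that there is a total structure $\MT$ yielding a strong duality on $\Gn=\ISP(\Cn)$.

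Next I would apply Lemma~\ref{Lem:TotalDualAmal} verbatim: under precisely this assumption its conclusion is that $\Gn$ has the amalgamation property. This is the only substantive step, and it has already been established. Finally I would invoke Maksimova's theorem \cite{Maks}, recalled in Section~\ref{sec:intro}, which states that $\Gn$ fails the amalgamation property whenever $n\geq 4$. The two statements are in direct conflict, so the assumed total single-sorted strong duality cannot exist, which is exactly the assertion of the corollary.

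There is essentially no obstacle here beyond bookkeeping: the whole content lies in Lemma~\ref{Lem:TotalDualAmal} and the cited result of Maksimova, and the corollary is their immediate contrapositive combination. The one point worth checking — the ``main obstacle'', such as it is — is that the terminology matches the hypotheses of the lemma: ``strong duality'' is precisely the strength assumption used in the lemma (via \cite[Theorem~6.1.2 and Exercise~6.1]{CD98}), and ``total single-sorted structure'' is exactly the total $\MT$ to which the lemma applies. Both identifications are immediate from the definitions, so no further argument is required.
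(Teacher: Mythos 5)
Your proposal is correct and is exactly the paper's argument: the corollary is stated there as the immediate combination of Lemma~\ref{Lem:TotalDualAmal} (a total structure yielding a strong duality forces the amalgamation property) with Maksimova's theorem that $\G_n$ fails amalgamation for $n\geq 4$. Nothing further is needed.
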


Given a natural duality, the problem of describing which 
maps between dual structures correspond to embeddings between 
algebras is not as simple in general 
as it is when the duality is strong and 
based on a total structure.
However  for the G\"odel algebra varieties $\Gn$  
 the task of describing such 
maps is greatly facilitated by two features which work to our 
advantage.  
Firstly, we can call on  the two-way translation between our natural  
dualities and the Priestley/Esakia duality.
Secondly,  the latter duality is the restriction to Heyting algebras of 
Priestley duality,  which is 
strong and has a total structure as the alter ego.

\begin{lemma}\label{Lem:CarEmbed}
Let $\A,\B\in\G_n$ with $n \geq 2 $ and $f\colon\A\to\B$ be a 
homomorphism. 
For each $\sigma\in \Sigma_n$, the following statements are 
equivalent:
\begin{newlist}
\item[{\upshape (1)}] $f$ is an embedding;
\item[{\upshape (2)}] for each $x\in  \D^{\sigma}(\A)$, there exists 
$y\in\D^{\sigma}(\B)$ such that 
$\D^{\sigma}(f)(y)
\newapprox^{\sigma}_{\scriptscriptstyle \D^\sigma (\A)} x$.
\end{newlist}
\end{lemma}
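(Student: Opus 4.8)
The plan is to reduce the whole equivalence to a single surjectivity statement about the Priestley/Esakia dual map $\fnt{HU}(f)$, exploiting that Priestley duality is a strong, total duality together with the two-way translation of Section~\ref{sec:Translation}. First I would record the standard fact that, since the forgetful functor $\fnt{U}\colon\cat H\to\CCD$ leaves underlying maps unchanged, $f$ is an embedding in $\Gn$ if and only if $\fnt{U}(f)$ is injective in $\CCD$; and because Priestley duality sends injective homomorphisms to surjective continuous maps (see \cite[Chapter~11]{ILO2}), this holds if and only if $\fnt{HU}(f)\colon\fnt{HU}(\B)\to\fnt{HU}(\A)$ is onto. Thus it suffices to show that condition~(2) is equivalent to surjectivity of $\fnt{HU}(f)$.

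The bridge between the two dual pictures is a single associativity identity. For $y\in\D^{\sigma}(\B)=\Gn(\B,\Cn)$ we have $\D^{\sigma}(f)(y)=y\circ f$, while $\fnt{HU}(f)$ acts by $v\mapsto v\circ f$, so
\[
\w\circ\bigl(\D^{\sigma}(f)(y)\bigr)=\w\circ y\circ f=\fnt{HU}(f)(\w\circ y);
\]
in other words $\Phi_{\w}$ intertwines $\D^{\sigma}(f)$ with $\fnt{HU}(f)$.

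Next I would translate (2) across this identity. By Lemma~\ref{Lem:Newapprox}, the relation $\D^{\sigma}(f)(y)\newapprox x$ is equivalent to $\w\circ\D^{\sigma}(f)(y)=\w\circ x$, which by the displayed identity reads $\fnt{HU}(f)(\w\circ y)=\w\circ x$. Hence (2) asserts: for every $x\in\D^{\sigma}(\A)$ there is $y\in\D^{\sigma}(\B)$ with $\fnt{HU}(f)(\w\circ y)=\w\circ x$. I then invoke surjectivity of $\Phi_{\w}$ on both dual spaces (Lemma~\ref{lem:chain-hom}). For the forward implication, given $u\in\fnt{HU}(\A)$ I choose $x$ with $\w\circ x=u$; then (2) supplies $y$ with $\fnt{HU}(f)(\w\circ y)=u$, so $u\in\ran\fnt{HU}(f)$. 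Conversely, if $\fnt{HU}(f)$ is onto, then given $x$ I set $u=\w\circ x$, pick $v\in\fnt{HU}(\B)$ with $\fnt{HU}(f)(v)=u$ and, using surjectivity of $\Phi_{\w}$ on the $\B$-side, pick $y$ with $\w\circ y=v$; then $\fnt{HU}(f)(\w\circ y)=\w\circ x$, giving $\D^{\sigma}(f)(y)\newapprox x$. Combining this with the reduction of the first paragraph completes the proof.

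The argument is not technically hard; the one place requiring care—and where the interactive-duality apparatus is genuinely used—is the translation of (2): one must check that the $\newapprox$-condition in the natural dual collapses, via $\Phi_{\w}$ and Lemma~\ref{Lem:Newapprox}, to a true surjectivity statement about $\fnt{HU}(f)$, and that surjectivity of $\Phi_{\w}$ at both ends lets one move freely between the two dual pictures. The restriction to $\sigma\in\Sigma_n$ and the independence of the conclusion from the choice of $\sigma$ are then automatic, since $\newapprox$ depends only on the partial operations $g_1,\dots,g_{n-3}$ and not on the chosen endomorphism, as established in Lemma~\ref{Lem:InvRevEn}.
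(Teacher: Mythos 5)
Your proof is correct and takes essentially the same route as the paper: both reduce statement (1) to surjectivity of $\fnt{HU}(f)$ via the standard Priestley-duality fact, and then identify condition (2) with that surjectivity through the $\newapprox$-quotient description of the Esakia dual. The only difference is one of packaging: where the paper cites the natural isomorphism $\fnt{HU}\cong\fnt{F}^{\sigma}\D^{\sigma}$ and the action of $\fnt{F}^{\sigma}$ on morphisms, you re-derive exactly that fragment by hand from the associativity identity $\w\circ(y\circ f)=(\w\circ y)\circ f$, Lemma~\ref{Lem:Newapprox}, and surjectivity of $\Phi_{\w}$ (Lemma~\ref{lem:chain-hom}).
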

\begin{proof}
The map $f$ is an embedding if and only if $\fnt{HU}(f)$ is surjective. 
Since the functor $\fnt{HU}$ is naturally isomorphic to 
$\fnt{F}^{\sigma}\D^{\sigma}$, the map $\fnt{HU}(f)$ is surjective if and only 
if $\fnt{F}^{\sigma}(\D^{\sigma}(f))$ is surjective. 
From the observations we made about  
the functor $\fnt{F}^{\sigma}$ in  Section~\ref{sec:Translation} we have
$\fnt{F}^{\sigma}(\D^{\sigma}(f))([z]_{\newapprox ^{\sigma}
_{\scriptscriptstyle \D^\sigma (\B)}})=[\D^{\sigma}(f)(z)]_{\newapprox ^{\sigma}
_{\scriptscriptstyle \D^\sigma (\A)}}$.
 Therefore $\fnt{F}^{\sigma}(\D^{\sigma}(f))$ is surjective if and only if (2) holds.
\end{proof}

We are now ready to prove the main result of this section.

\begin{theorem} \label{thm:APthm}
Let $(\A,\B,\C,f_B,f_C)$ be a $V$-formation in $\Gn$ with 
$n\geq 4$. 
Let $\sigma=(g_1,\ldots,g_{n-3},h_1)$. 
Then the  following statements are equivalent:
\begin{newlist}
\item[{\upshape (1)}] $(\A,\B,\C,f_B,f_C)$ admits amalgamation;
\item[{\upshape(2)}] the dual maps $\D^{\sigma}(f_B)$ and $\D^{\sigma}(f_C)$ satisfy the following conditions: 
\begin{newlist}
\item[{\upshape (a)}] for each $x\in\D^{\sigma}(\B)$, there exist 
$y\in\D^{\sigma}(\B)$ and $z\in\D^{\sigma}(\C)$ such that 
$x\newapprox_{\scriptscriptstyle \D^{\sigma}(\B)}^{\sigma} y$ and 
$\D^{\sigma}(f_{B})(y)=\D^{\sigma}(f_{B})(z)$; and
\item[{\upshape (b)}] for each $x'\in\D^{\sigma}(\C)$, there exist 
$y'\in\D^{\sigma}(\C)$ and $z'\in\D^{\sigma}(\B)$ such that 
$x'\newapprox_{\scriptscriptstyle \D^{\sigma}(\C)}^{\sigma} y'$ and 
$\D^{\sigma}(f_{C})(y')=\D^{\sigma}(f_{B})(z')$.
\end{newlist}
\end{newlist}
\end{theorem}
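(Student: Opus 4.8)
The plan is to reduce the amalgamation of the $V$-formation to the behaviour of the two pushout maps and then to dualise, exploiting the fact that for the distinguished choice $\sigma=(g_1,\ldots,g_{n-3},h_1)$ the duality is full (Theorem~\ref{Theo:Fullness}). Recall from the discussion preceding Lemma~\ref{Lem:TotalDualAmal} that $(\A,\B,\C,f_B,f_C)$ admits amalgamation if and only if both pushout maps $p_B\colon\B\to\B\coprod_{\A}\C$ and $p_C\colon\C\to\B\coprod_{\A}\C$ are embeddings. Thus statement~(1) is equivalent to the conjunction of ``$p_B$ is an embedding'' and ``$p_C$ is an embedding'', and it suffices to show that these two assertions coincide with conditions~(a) and~(b) respectively.

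First we would dualise the pushout. Because $\sigma=(g_1,\ldots,g_{n-3},h_1)$ yields a full duality, the functors $\D^{\sigma}$ and $\E^{\sigma}$ form a dual equivalence between $\Gn$ and $\IScP(\twiddle{\Cn^{\sigma}})$, so $\D^{\sigma}$ carries the pushout of the $V$-formation to the corresponding fibred product in the dual category. Concretely, exactly as in the proof of Lemma~\ref{Lem:TotalDualAmal}, let $\X$ be the subspace of $\D^{\sigma}(\B)\times\D^{\sigma}(\C)$ with underlying set
\[
X=\{\,(y,z)\in\D^{\sigma}(\B)\times\D^{\sigma}(\C)\mid \D^{\sigma}(f_B)(y)=\D^{\sigma}(f_C)(z)\,\}.
\]
Since $\IScP(\twiddle{\Cn^{\sigma}})$ is closed under products and closed substructures, and the partial operations are lifted pointwise and are preserved by the morphisms $\D^{\sigma}(f_B)$ and $\D^{\sigma}(f_C)$, the set $X$ is a closed substructure and is the categorical pullback of $\D^{\sigma}(f_B)$ and $\D^{\sigma}(f_C)$ (see \cite[Chapter~2]{CD98}). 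Hence $\D^{\sigma}(\B\coprod_{\A}\C)\cong\X$, and under this isomorphism the dual maps $\D^{\sigma}(p_B)$ and $\D^{\sigma}(p_C)$ become the coordinate projections $\pi_B\colon\X\to\D^{\sigma}(\B)$ and $\pi_C\colon\X\to\D^{\sigma}(\C)$. We stress that, in contrast to the total-structure situation of Lemma~\ref{Lem:TotalDualAmal}, we make no claim that $\D^{\sigma}(f_B)$, $\D^{\sigma}(f_C)$, or the projections are surjective; this is exactly the latitude in which amalgamation may fail.

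The remaining step applies Lemma~\ref{Lem:CarEmbed}. Applied to $p_B\colon\B\to\B\coprod_{\A}\C$, with dual map $\pi_B$, it says that $p_B$ is an embedding if and only if for every $x\in\D^{\sigma}(\B)$ there is some $(y,z)\in\X$ with $\pi_B(y,z)=y\newapprox_{\scriptscriptstyle \D^{\sigma}(\B)}^{\sigma}x$. Unwinding the membership condition defining $X$, this says precisely that there exist $y\in\D^{\sigma}(\B)$ and $z\in\D^{\sigma}(\C)$ with $x\newapprox_{\scriptscriptstyle \D^{\sigma}(\B)}^{\sigma}y$ and $\D^{\sigma}(f_B)(y)=\D^{\sigma}(f_C)(z)$, which is condition~(a). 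The same argument applied to $p_C$, with dual map $\pi_C$, yields condition~(b). Combining the three steps gives the equivalence of (1) with the conjunction of (a) and (b).

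The main obstacle we anticipate is the first step: justifying rigorously that the pushout dualises to the concrete fibred product $\X$ and that the pushout maps dualise to the projections. In the total-strong case this is routine, but here one must check that the partial operations of $\twiddle{\Cn^{\sigma}}$ do not prevent $X$ from being a substructure, and that it is \emph{fullness}, rather than mere dualisability, that licenses replacing $\D^{\sigma}(\B\coprod_{\A}\C)$ by $\X$; the rest is a direct translation through Lemma~\ref{Lem:CarEmbed}. We also note that the second equality in condition~(a) should read $\D^{\sigma}(f_B)(y)=\D^{\sigma}(f_C)(z)$, since $z\in\D^{\sigma}(\C)$, and the argument above confirms this corrected reading.
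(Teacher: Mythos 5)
Your proof is correct and takes essentially the same approach as the paper: both arguments form the fibred product $\X$ of $\D^{\sigma}(f_B)$ and $\D^{\sigma}(f_C)$ inside $\D^{\sigma}(\B)\times\D^{\sigma}(\C)$, invoke fullness (Theorem~\ref{Theo:Fullness}) to identify $\E^{\sigma}(\X)$ with the pushout and the projections $\pi_B,\pi_C$ with the duals of the pushout maps, and then apply Lemma~\ref{Lem:CarEmbed} to convert the requirement that the pushout maps be embeddings into conditions (a) and (b). Your side remark is also right: the second occurrence of $\D^{\sigma}(f_B)$ in condition (a) is a typo for $\D^{\sigma}(f_C)$ (otherwise the expression is not even well defined for $z\in\D^{\sigma}(\C)$), and the paper's own definition of $\X$ via $\D(f_B)(x)=\D(f_C)(y)$ confirms this intended reading.
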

\begin{proof}
In this proof we only consider $\sigma=(g_1,\ldots,g_{n-3},h_1)$, therefore we shall omit the superscripts and write $\twiddle{\Cn}$, $\D$ and $\E$ instead of $\twiddle{\Cn^{\sigma}}$, $\D^{\sigma}$ and $\E^{\sigma}$, respectively.

As in Lemma~\ref{Lem:TotalDualAmal}, let  $\X$ be the subspace of 
$\D(\B)\times\D(\C)$ whose universe is 
$\{\,(x,y)\in\D(\B)\times\D(\C)\mid \D(f_B)(x)=\D(f_C)(y)\,\}$,
 that is, the fibred product 
$\D(\B)\overset{\D(f_B)}{\longrightarrow}\D(\A)
\overset{\D(f_C)}{\longleftarrow}\D(\C)$. 
By Theorem~\ref{Theo:Fullness}, the duality determined 
by~$\twiddle{\Cn}$ is full. 
Therefore $\E(\X)$ is the pushout of $(\A,\B,\C,f_B,f_C)$ with 
pushout maps 
$\E(\pi_{B})\circ e_\B\colon \B\to \E(\X)$ and 
$\E(\pi_{C})\circ e_\C\colon \C\to \E(\X)$. 
Now observe that $\pi_{B}$ satisfies condition (2) in
Lemma~\ref{Lem:CarEmbed} if and only if (a) holds. 
This proves that $\E(\pi_{B})$ is an embedding if and only if (a) 
holds. 
Similarly, $\E(\pi_{C})$ is an embedding if and only if (b) holds. 
\end{proof}

Our last application concerns coproducts of G\"odel algebras.
The coproduct of a family of algebras depends not only on the 
algebras
involved
  but also on the class in which we form the coproduct.
In particular, given  a set  $\cat{K}$  of algebras in $\Gn\subseteq\G$, their  
coproduct in $\G$ might be different from their coproduct in 
$\Gn$. 
Nonetheless, if the set $\cat K$ is finite, the coproduct formed in 
$\G$ coincides with the coproduct formed in $\G_m$,  for a 
suitably large $m$; 
in particular, it is enough to consider
$m=2+\sum_{\A \in \cat{K}} (k_{\A}-2)$,
 where  $k_{\A}$ is the minimal $k$ 
 such that  $\A\in\G_k$ for  all $\A\in\cat K$.
These observations give us access  
to certain coproducts in $\G$. In particular we can calculate 
coproducts in $\G$ of any finite set of  algebras belonging to~$\bigcup_{n}\Gn$.

Natural dualities, when available,  are a good  tool for  the study of 
coproducts: under such a  duality, the dual space of the coproduct 
of a family of algebras corresponds to the cartesian product of their 
dual spaces. 
This fact was already noted in \cite[Section~5]{D76}, where it
was applied to determine finitely generated free algebras in
$\Gn$ and in $\G$, with the calculations performed wholly within the 
natural duality setting. 
The main difficulty one encounters 
in attempting to use
 a natural duality to  
calculate the coproduct in $\Gn$ or in $\G$ of a family of algebras 
$\cat K\subseteq\G_n$ 
lies in 
finding effective 
descriptions of the dual 
spaces of the algebras $\A\in\cat K$ and then of the algebra dual  
to  the resulting cartesian product. 

On the other hand,  using Priestley/Esakia duality instead,
as D'Antona and Marra do in \cite{DM}, brings different challenges.   
While dual spaces may be easy to visualise, the duality functor  
({\it viz.}~the restriction of $\fnt{HU}$ to $\Gn$; see Fig.~\ref{Fig:GodelDual})
does not in general convert a coproduct in $\Gn$ to a cartesian product, so that the dual space of a coproduct is hard to describe. 
D'Antona and Marra proceed in the following way to describe 
the coproduct of  any finite family $\cat K$  of finite 
(equivalently, finitely generated) G\"odel algebras.
They first find the Priestley/Esakia dual of each 
algebra in $\cat K$.
Then, using the fact that product 
distributes over coproduct 
in the category of Esakia spaces (see~\cite[p.~391]{DW81},
 where it is proved that, in every variety of Heyting algebras,
coproduct distributes over product), the problem reduces to 
describing the product of two finite trees in a suitable category. 
In bare outline, the authors' general strategy 
to construct the product of two trees is: 
first to represent each tree by a family of 
ordered partitions; second, 
 to construct  another family of ordered 
partitions 
from the original ones 
by  suitably shuffling and merging these. 
Finally they must  
reverse the process to obtain a tree from  the resulting set of 
ordered partitions.

Our two-way translation allows us to work with a natural duality and 
Priestley/Esakia in combination,  and so gives a simpler approach 
than one based on either of these duality techniques  alone.  
We present our method as a 5-step procedure.      
We shall fix $\sigma \in \Sigma_n$ to be 
$(g_1,\ldots, g_{n-3},h_{n-2})$ because this gives particularly 
simple pictures in examples, but any other choice of~$\sigma$ 
would also be legitimate. 
To  calculate $\coprod_{\Gn}  \cat K$, where 
$\cat K\subseteq\G_n$: 
\begin{newlist}
\item[{\bf 1.}] For each $\A\in\cat K$, determine $\fnt{HU}(\A)$.
\item[{\bf 2.}]  Use  Theorem~\ref{thm:going back} to construct  
$\fnt{G}^{\sigma}\fnt{HU}(\A)$ from $\fnt{HU}(\A)$, for each ${\A\in\cat K}$.
\item[{\bf  3.}]   Form  the cartesian product 
$\X=\prod \{\,\fnt{G}^{\sigma}\fnt{HU}(\A)\mid \A\in\cat K\,\}$.
\item[{\bf  4.}] Determine $\newapprox_{\XX}^{\sigma}$, $\newcover_{\XX}^{\sigma}$ and $(\X/{\newapprox}_{\XX}^\sigma;\leq_{\XX}^{\sigma},\Tp^{\XX}/{\newapprox}_{\XX}^\sigma)$ as in Theorem~\ref{thm:RevEngGodel}.
\item[{\bf  5.}] 
 $\fnt{VK}(\X/{\newapprox}_{\XX}^{\sigma},\leq_{\XX}^{\sigma},\Tp^{\XX}/{\newapprox}_{\XX}^\sigma)$
gives $\coprod_{\Gn}  \cat K$.
\end{newlist}

\noindent 
Of course the procedure described above  does not constitute 
an algorithm unless restricted to a finite family of finite algebras.

Some comments on our strategy  as compared with that in 
\cite{DM} should be made.  
Our procedure allows us on the one hand to be flexible 
and to calculate  coproducts in any  subvariety $\G_{k}$ that 
contains the algebras, and on the other hand, most significantly, we 
replace the involved procedure of calculating shuffles and merges
simply by the computation of a  cartesian product of structured 
spaces. 
It might be seen as a disadvantage that we need to 
know at the outset
in which G\'odel subvariety $\Gn$ we need to 
work in order that
 the coproduct calculated in $\Gn$ coincides 
with that calculated  in $\G$.
But this is a minor issue: since, as we observed before, a suitable 
$n$ can  easily be found once the finite family of algebras is 
fixed.
From the perspective of the procedure developed in~\cite{DM}, one 
can similarly cut down to a finitely generated subvariety $\G_k$.  
However to do so one needs first to calculate the Priestley/Esakia 
dual of the coproduct in $\G$ and then to truncate the resulting 
forest to obtain a forest of  depth $k-2$.

\begin{figure}
\includegraphics[scale=.7, trim=100 250 100 60, clip]{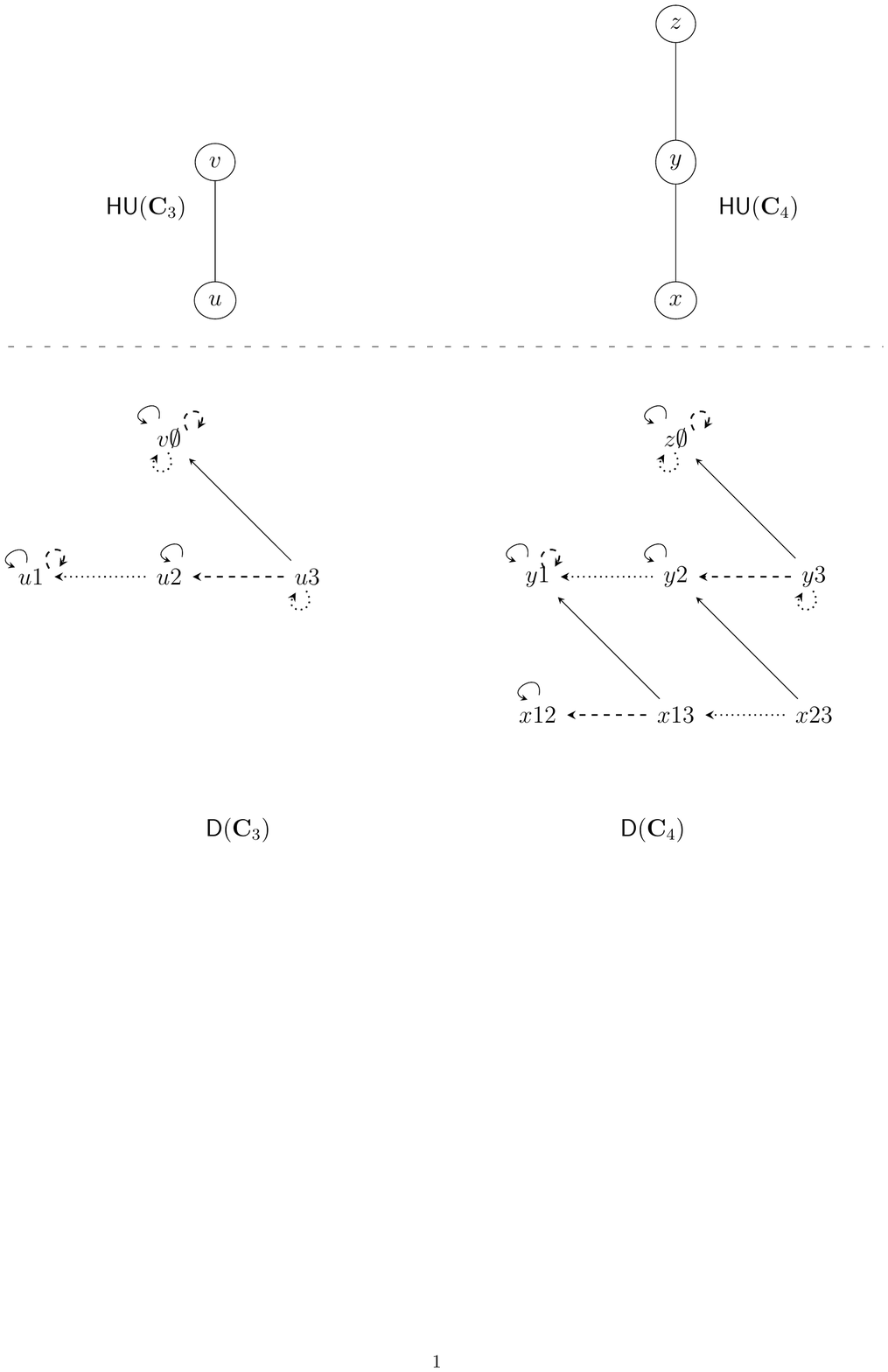}
\caption{Calculating $\str{C}_3\coprod_{\G_5} \str{C}_4$:  Steps 1 and 2} \label{Fig:C3C4G5}
\end{figure}

We now illustrate how our method works in practice.
The first example we choose is 
the one given in
\cite[Examples~1 and 2]{DM}.  
This is done to enable the reader   
to compare and contrast the two procedures.
(Observe that in \cite{DM} the order in the 
Priestley/Esakia spaces is the 
dual of the one considered in this 
paper.)

\begin{ex}  \label{ex:C3C4G5}
Let us determine  $\C_3 \coprod_{\G} \C_4$.
 First observe $\C_3\coprod_{\G}\C_4$ belongs to $\G_5$. 
We 
take 
$\sigma=(g_1,g_2,h_{3})$.

\noindent 
{\bf Step 1}: 
Let $\fnt{HU}(\C_3)=(\{u,v\};\leq)$ where $u< v$ and 
$\fnt{HU}(\C_3)=(\{x,y,z\};\leq)$ with $x<y<z$ 
(see~Fig.~\ref{Fig:C3C4G5}).  
\begin{figure}
\includegraphics[clip=true,angle=90,trim=90 340 35 118,clip]
{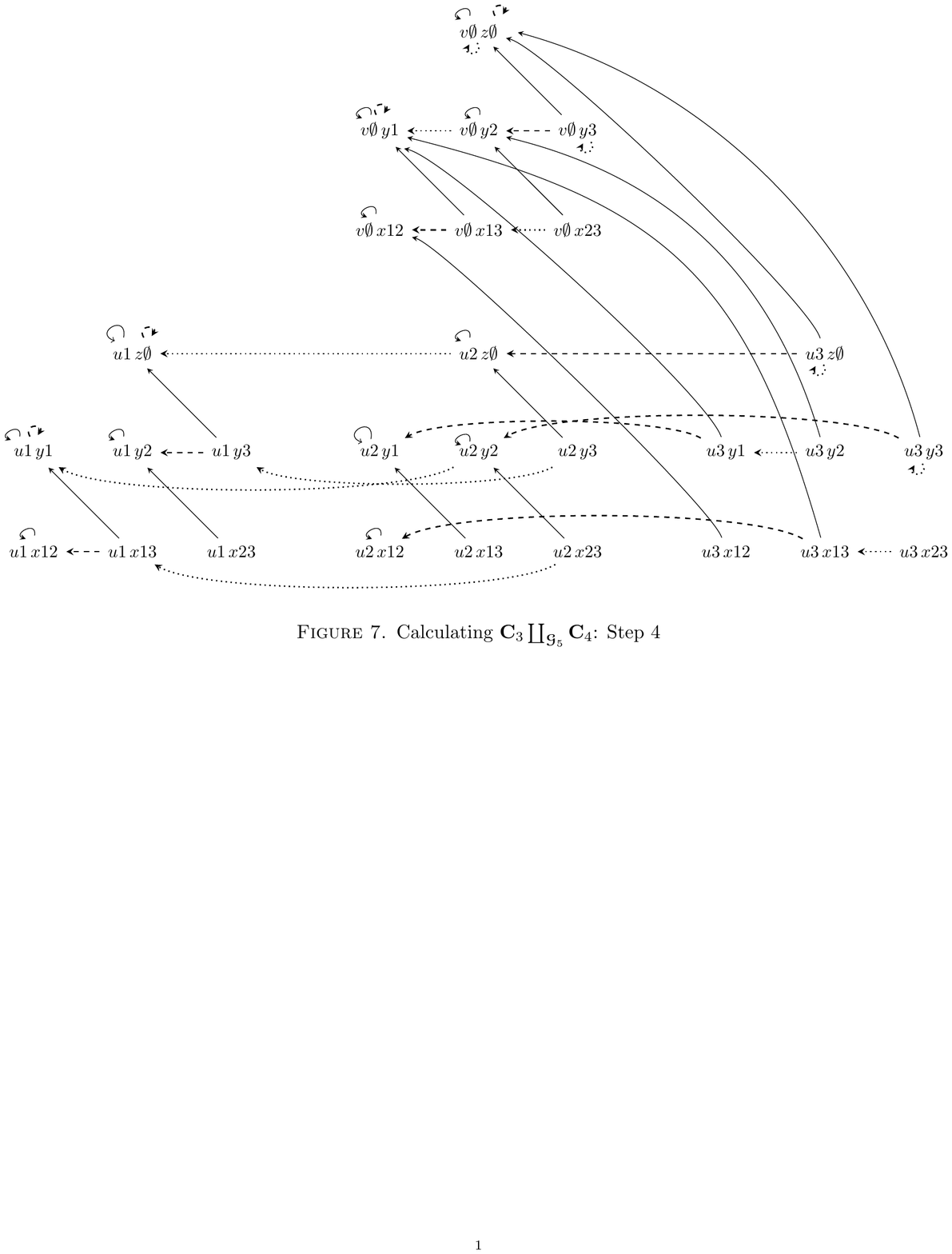} 
\vspace*{1cm}
\end{figure}

\setcounter{figure}{7}
\begin{figure}
\includegraphics[scale=.8,trim=140 420 80  100,clip  ]{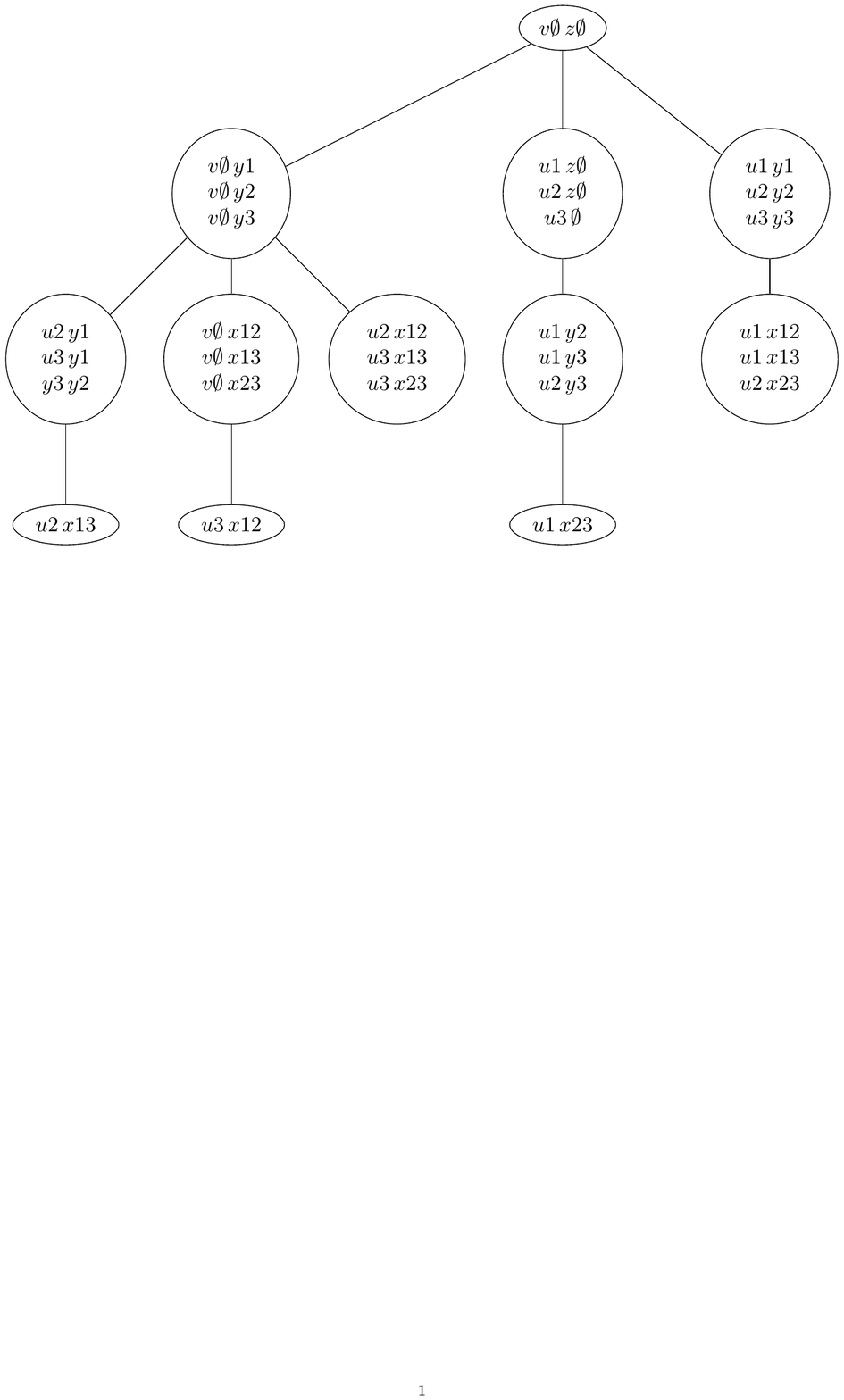}
\caption{Calculating $\str{C}_3\coprod_{\G_5} \str{C}_4$ :  Step 4
} \label{fig:cop3}
\end{figure}
\noindent {\bf Step 2}:  
Figure~\ref{Fig:C3C4G5} 
also  
shows $\fnt{G}^{\sigma}(\fnt{HU}(\C_3))$ and 
 $\fnt{G}^{\sigma}(\fnt{HU}(\C_4))$. 

\noindent
{\bf Step 3}: 
The cartesian product $\fnt{G}^{\sigma}(\fnt{HU}(\C_3))\times \fnt{G}^{\sigma}(\fnt{HU}(\C_3))$ is represented in 
Fig.~7. 

\noindent 
{\bf Step 4}:
  Figure~\ref{fig:cop3} depicts  
 the calculation of $\fnt{F}^{\sigma}(\fnt{G}^{\sigma}(\fnt{HU}(\C_3)\times\fnt{G}^{\sigma}(\fnt{HU}(\C_4))$ and therefore isomorphic to $\fnt{HU}(\C_3\coprod_{\G_5}\C_4)$.

\noindent 
{\bf Step 5}:
The  lattice whose Priestley dual  is shown in Fig.~\ref{fig:cop3} is
(the reduct of the  G\"odel algebra)  isomorphic to 
$
\bot \oplus\bigl((\bot\oplus( \C_3\times\C_3\times\C_2))\times \C_4\times \C_3\bigr)$,
where 
$\oplus$ denotes linear sum.  
\end{ex}

\begin{figure}
\includegraphics[scale=.9,trim=150 450 120  100,clip]{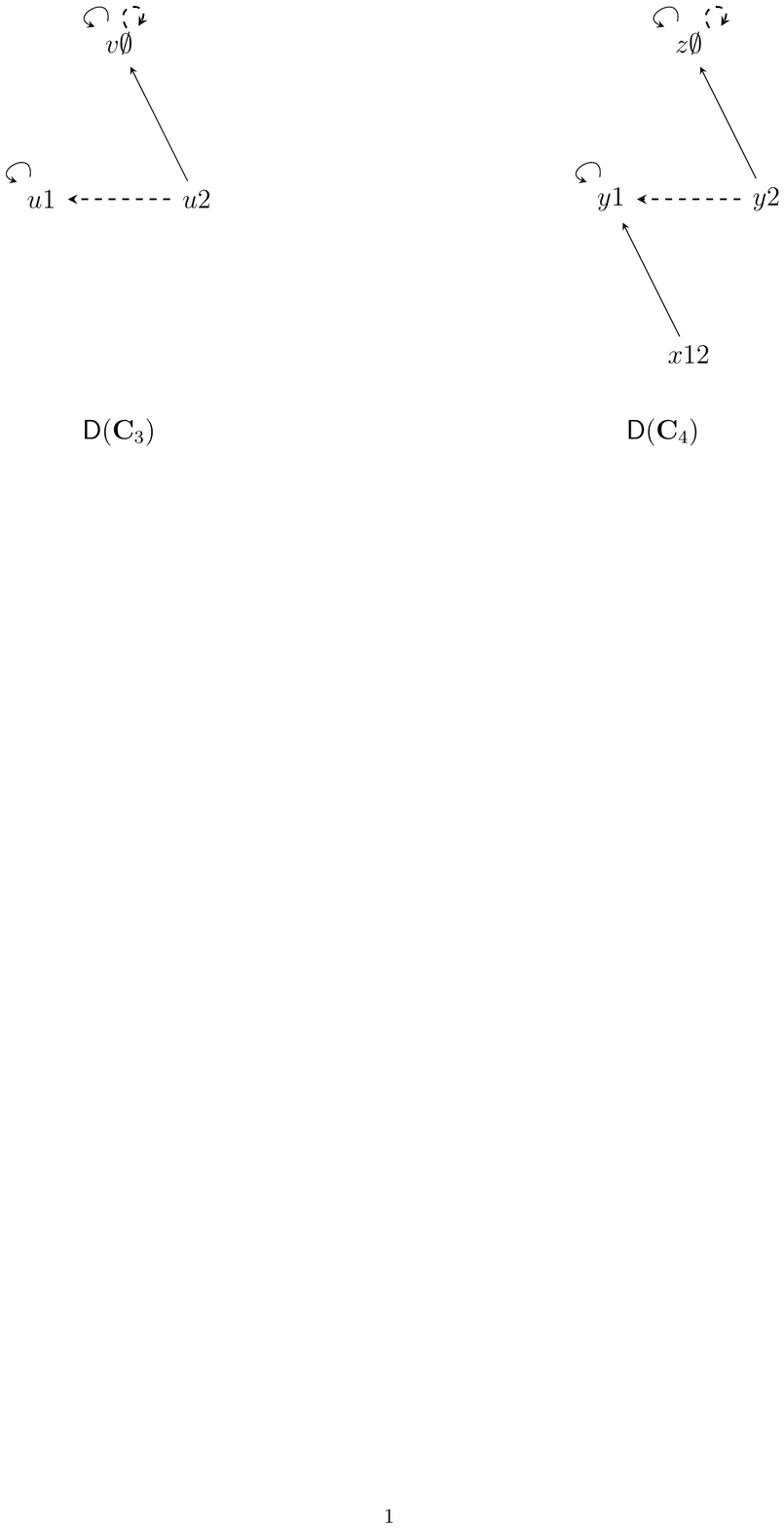}
\caption{Calculating $\str{C}_3\coprod_{\G_4} \str{C}_4$:  Step 2
} \label{fig:cop32}
\end{figure}

\begin{figure}
\includegraphics[scale=1,  
trim=170 400 140 100,clip]{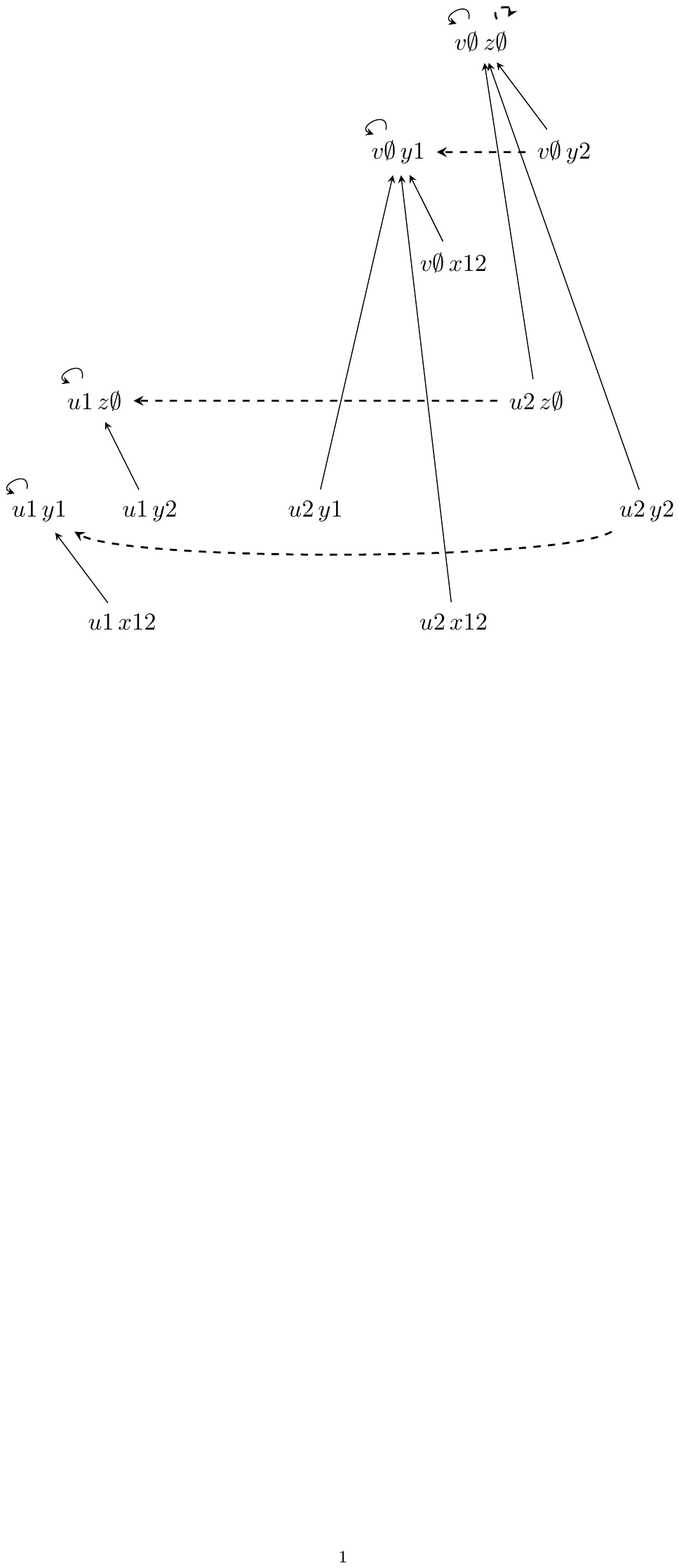}
\caption{Calculating $\str{C}_3\coprod_{\G_4} \str{C}_4$:  Step 3
} \label{fig:cop34}
\end{figure}

\begin{figure}
\includegraphics[scale=.8,trim=140 490  80   100,clip]{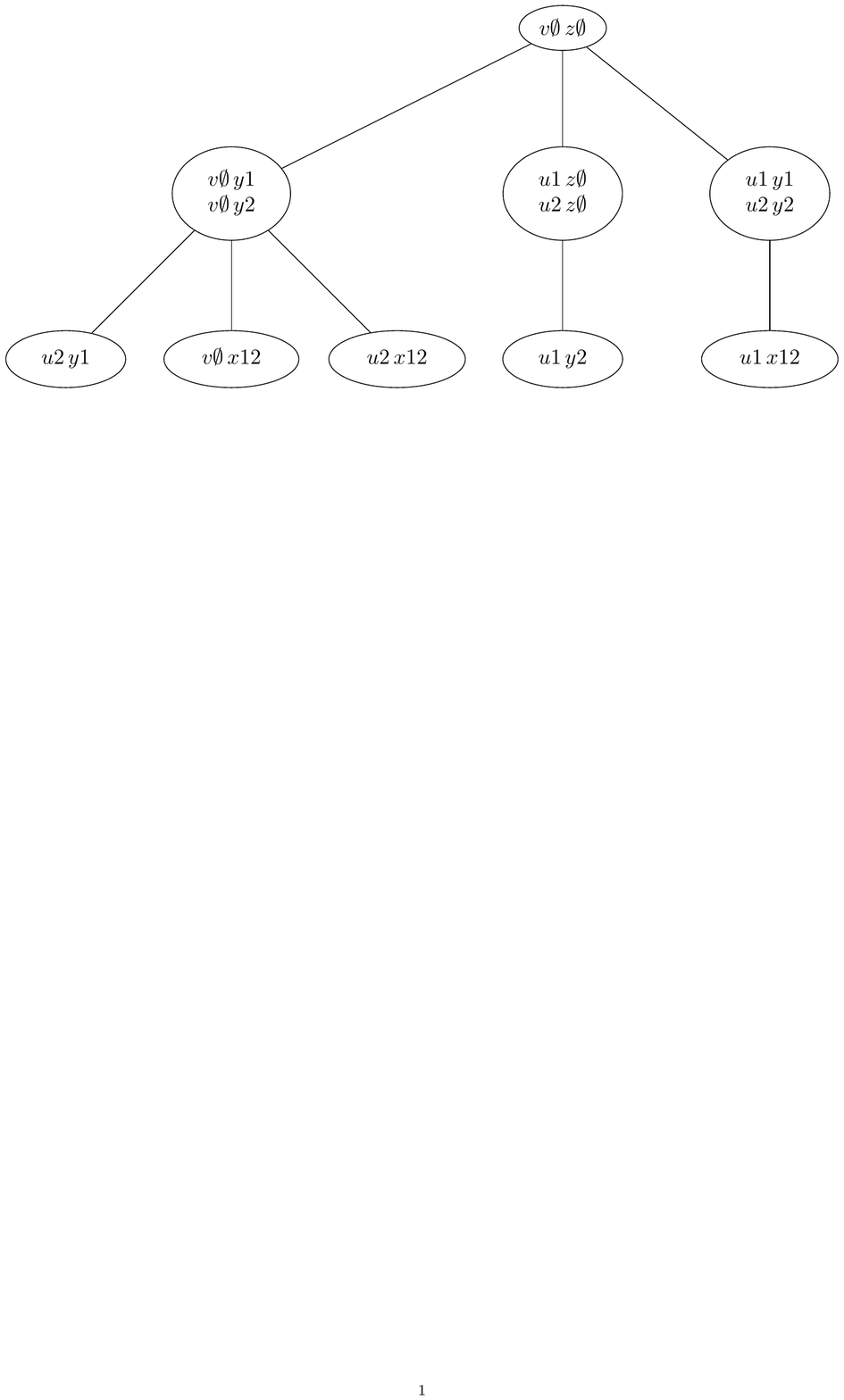}
\caption{Calculating $\str{C}_3\coprod_{\G_4} \str{C}_4$:  Step 4
} \label{fig:cop44}
\end{figure}

\begin{ex}
Since  $\C_3$ and $\C_4$ lie in $\G_4$, we can  also calculate 
 $\C_3\coprod_{\G_4}\C_4$. 
Here Step 1 is the same as in Example~\ref{ex:C3C4G5}.  Steps~2--4 are
shown in \hbox{Figs.~\ref{fig:cop32}--\ref{fig:cop44}}.  
We see that the tree
 $\X=\fnt{G}^{\sigma}(\fnt{F}^{\sigma}\fnt{HU}(\C_3)\times 
\fnt{F}^{\sigma}\fnt{HU} (\C_4))$ 
is (isomorphic to) a truncation of the one
obtained in Step~4 of Example~\ref{ex:C3C4G5}.
It is only for $n \geq 5$ that 
the coproduct in $\G_n$ coincides with that in $\G$.
Finally, to complete Step~5 we observe that the dual lattice of $\X$ is 
\[
\bot \oplus((\bot\oplus( \C_2\times\C_2\times\C_2))\times \C_3\times \C_3),
\]
which has 82 elements.
\end{ex}

\end{document}